\theoremstyle{plain}
\newtheorem{lemma}{Lemma}[section]
\newtheorem{prop}[lemma]{Proposition}
\newtheorem{coro}[lemma]{Corollary}
\newtheorem{theorem}[lemma]{Theorem}
\theoremstyle{definition}
\newtheorem{definition}[lemma]{Definition}
\newtheorem{remark}[lemma]{Remark}
\newcommand{\ts}{\hspace{0.5pt}}
\newcommand{\RR}{\mathbb{R}\ts}
\newcommand{\NN}{\mathbb{N}}
\newcommand{\R}{\mathbb{R}\ts}
\newcommand{\LD}{\mathcal D}
\newcommand{\CX}{\mathcal X}
\newcommand{\aV}[1]{\left\Vert #1\right\Vert}
\newcommand{\as}[1]{\langle #1\rangle}
\newcommand{\ov}[1]{\overline{#1}}
\newcommand{\ow}[1]{\widetilde{#1}}
\newcommand{\oh}[1]{\widetilde{#1}_{l,b}}
\newcommand{\eps}{\varepsilon}
\newcommand{\supp}{\mathrm{supp}}
\newcommand{\muc}{\mu^{(c)}}
\newcommand{\Qs}{Q^\#}
\newcommand{\Ls}{L^\#}
\newcommand{\Ds}{\mathcal{D}^\#}
\newcommand{\Hmm}[1]{\leavevmode{\marginpar{\tiny%
$\hbox to 0mm{\hspace*{-0.5mm}$\leftarrow$\hss}%
\vcenter{\vrule depth 0.1mm height 0.1mm width \the\marginparwidth}%
\hbox to 0mm{\hss$\rightarrow$\hspace*{-0.5mm}}$\\\relax\raggedright #1}}}
\newcommand{\cL}{\mathcal{L}}
\begin{document}

\title[Global properties in terms of Green's formula]{Global properties of
Dirichlet forms in terms of Green's formula}

\author[]{Sebastian Haeseler$^1$}
\author[]{Matthias Keller$^2$}
\author[]{Daniel Lenz$^3$}
\author[]{Jun Masamune$^4$}
\author[]{Marcel Schmidt$^5$}

\address{$^1$, $^2$, $^3$, $^5$ Mathematisches Institut, Friedrich Schiller Universit\"at Jena,
  07743 Jena, Germany, http://www.analysis-lenz.uni-jena.de/}
\email{sebastian.haeseler@uni-jena.de}
\email{m.keller@uni-jena.de}
\email{daniel.lenz@uni-jena.de}
\email{marcel.schmidt@uni-jena.de}

\address{$^4$ Division of Mathematics / Research Center for Pure and Applied Mathematics,
Graduate School of Information Sciences, Tohoku University,
6-3-09 Aramaki-Aza-Aoba, Aoba-ku, Sendai 980-8579, Japan, http://www.math.is.tohoku.ac.jp/~jmasamune/}
\email{jmasamune@m.tohoku.ac.jp}

%
% -------------------------------------------------------------%
% -------------------------------------------------------------%
\begin{abstract}
We study global properties of Dirichlet forms such as uniqueness of
the Dirichlet extension, stochastic completeness and recurrence. We
characterize these properties by means of vanishing of a boundary
term in Green's formula for functions from suitable function spaces
and  suitable operators arising from  extensions of the underlying
form. We first present results in   the  framework of general
Dirichlet forms on $\sigma$-finite measure spaces. For regular
Dirichlet forms our results can be strengthened as   all operators
from the previous considerations turn out to be restrictions of a
single operator. Finally, the results are applied to graphs,
weighted manifolds,
 and metric graphs, where the operators under investigation can
be determined rather explicitly.

\end{abstract}
% -------------------------------------------------------------%
\date{\today} %
\maketitle

\tableofcontents

\section*{Introduction}
We study global properties of  Dirichlet forms in terms vanishing or
non-vanishing of a  ``boundary term'' in a  Green's formula. In this
introduction we survey the structure and the results of this paper
without going into  detail. The precise definitions and statements
can then be found in the upcoming sections.

%The theory of Dirichlet forms provides a unified  framework for
%Markov processes such as Brownian motion on Riemannian manifolds and
%jump processes on discrete  graphs.

 The global properties under investigation of a Dirichlet
form on
 a $\sigma$-finite measure space $(X,m)$ are the
following:
\begin{itemize}
  \item Uniqueness of Dirichlet extensions.
  \item Stochastic completeness.
  \item Recurrence.
\end{itemize}

Here, we present a rather complete study of  these properties via
vanishing of  Green's formula
\begin{align}\label{e:GF}\tag{GF}
    \int_{X} \Delta u \; dm =0,
\end{align}
where the function $u$ is in a suitable class of functions and
$\Delta$ is a suitable extension of an operator arising from a
Dirichlet form depending on the problem.

The  study of  these issues by means of  Green's formula is inspired
by very recent work of Grigor'yan/Masamune \cite{GM} on weighted
manifolds. However, while their study is based on methods from
geometry  our approach is rather different. Indeed, we used
techniques from functional analysis  and the full advantage of the
modern abstract Dirichlet form theory. As a consequence, we will
obtain some new characterizations even for weighted manifolds. Our
main strategy in dealing  with the lack of geometric structure in
the general case is to consider extensions of the Dirichlet form
under investigation to suitable spaces  and to associate generators,
which are carefully chosen to optimize the results according to the
problems.

We study these  questions on three levels of abstraction while the
statements become the more explicit the more specific we get. The
most abstract level are general Dirichlet forms on $\sigma$-finite
measure spaces. Secondly, we focus on regular Dirichlet forms and,
finally, we present the results in the concrete contexts of graphs,
weighted manifolds,  and metric graphs.

Let us go into a bit  more detail. We start with the most abstract
level and discuss the three properties listed above.

In Section~\ref{section;Extensions} we  determine whether a Dirichlet extension $Q^{\#}$ of a Dirichlet form $Q$ is different
from $Q$  by existence of a positive subharmonic, but
non-harmonic, $L^{1}$ function $u$. In this case $u$ is
such that \eqref{e:GF} fails and $\Delta$ can be understood as the
``Gaffney Laplacian'' with respect to $Q$ and $Q^{\#}$ and it will
be denoted by $L'$. The main result (Theorem \ref{thm;2.2}) is the
characterization of the uniqueness of the Dirichlet extensions under
the presence of a maximum principle (see Definition \ref{def;MP}).
The maximum principle will be explored further in Section
\ref{section;MP} and a criterion for the maximum principle is given
(Theorem \ref{max principle}). The criterion is quite general, e.g., it
can be applied to various extensions (Silverstein extensions) of regular Dirichlet forms.

Secondly, we consider stochastic completeness  in Section~\ref{section;SC}. A
 Dirichlet form is called stochastically complete if
$(L+1)^{-1}1=1$,  see Definition~\ref{def:sc}  and the discussion
below for  background. We obtain characterizations for stochastic
completeness in terms of \eqref{e:GF} with $\Delta$ being the
$L^2$-generator $L$ or the $L^1$-generator $L^{(1)}$ and we obtain
another characterization by the dual of $L^{(1)}$
 on $L^{\infty}$ (Theorem~\ref{thm;SC}). The statement for the $L^{2}$-generator
is an extension of the corresponding result in \cite{GM} to general Dirichlet forms while the other characterizations
are new even in the case of manifolds.

Finally, in Section~\ref{section;recurrence}, we study recurrence. A
Dirichlet form is called recurrent if $\int_{0}^{\infty}e^{-tL}fdt$
is equal to $0$ or $\infty$ almost everywhere for all non-negative
$L^{1}$ functions $f$, see Definition~\ref{def:rec/trans}.  We
characterize recurrence by \eqref{e:GF} with $\Delta $ being an
extension of $L$ related to the extended Dirichlet space which will
be denoted by $L_{e}$.

We remark that there is a relationship among these global
properties. In general, recurrence implies stochastic completeness.
Furthermore, stochastic completeness implies $Q = \Qs$ when both
forms satisfy a maximum principle. These implications are well-known
and easily follow from our considerations.

After having established the theory in the general setting we zoom
in to the case of regular Dirichlet forms in
Section~\ref{section;Regular}. Under this situation, we are able to
extend all the above generators to one  operator denoted by
$\mathcal{L}$, and apply it to improve the results obtained in the
previous sections. This application is preceded by a version of
Fatou\rq{}s lemma for the reflected Dirichlet form which allows for
the definition of the operator $\mathcal{L}$. Then, in
Subsection~\ref{subsection;Silverstein}, we give two independent
criteria
 (Theorems \ref{H neq H0} and \ref{thm;US1})
for the uniqueness of Dirichlet extensions. The first one is stated
in terms of the existence of positive subharmonic functions, while the second one is phrased via
the validity of a Green formula. The last two subsections,
Subsections \ref{subsection;SC} and \ref{subsection;recurrence}, are
devoted to the study of stochastic completeness and recurrence,
respectively, using $\mathcal{L}$.

In the last the sections, Sections \ref{section;Graph},
\ref{section;Manifold}, and \ref{section;MG}, we  apply the abstract
results obtained of the previous sections to more concrete Dirichlet
forms.

Specifically, in Section \ref{section;Graph}, we  study  graphs
which have the prominent feature  that all of operators are
restrictions of a formal operator $\widetilde L$. This will allow us
to understand these problems in a unified way.  We  emphasize that
we do not need to assume local finiteness in any of our results.
Part of the application to graphs is based on \cite{Sch}.

In Section \ref{section;Manifold}, we study a general Dirichlet form
on a weighted manifold. The main result here is the determination of
the reflected Dirichlet space which generalizes a recent result of
\cite{CF} to the manifold setting. In particular, this result
implies that in this situation all of the introduced operators are
restrictions of some weighted version of the  Laplace Beltrami
operator.

Finally, in Section \ref{section;MG}, a general Dirichlet form on a
metric graph is studied. Part of this application is based on \cite{Hae}.

\textbf{List of some  relevant notation.} As we have explained
above, we will use various different operators associated to the
Dirichlet form and its extensions. Below, we will list them  to
serve as an index (all of them except $\mathcal{L}$ are defined for
a general Dirichlet form):
\begin{itemize}
\item $L: \mathcal D(L) \to L^2(X,m)$ -- the $L^2$-generator of the ``minimal''
Dirichlet form
$Q$ (Section~\ref{section;Extensions}).
\item $\Ls:\mathcal D(\Ls) \to L^2(X,m)$ -- the $L^2$-generator of the
``maximal'' Dirichlet form
$\Qs$ (Section~\ref{section;Extensions}).
\item $L' : \mathcal D(L') \to L^2(X,m)$ -- an extension of both $L$ and $\Ls$
which will be used to
characterize the agreement of $L$ and $\Ls$ (see Proposition~\ref{generator}).
\item $L^{(1)}$ -- the $L^1 (X,m)$-generator
which will be used to characterize stochastic completeness (Section~\ref{section;SC}).
\item $L_e: D(Q)_e \to L^2(X,m)$ -- an extension of $L$
which will be used to characterize recurrence (Section~\ref{section;recurrence}).
\item $\mathcal{L}: D(\mathcal{L}) \to L_{\rm loc}^1 (X,m)$ -- an extension of
all of the above operators,
which will be defined for a regular Dirichlet form (Section~\ref{section;Regular}).
\end{itemize}

\bigskip

\textbf{Acknowledgments.} M.K. and D.L. gratefully acknowledge
partial support from German Research Foundation (DFG). M.S. has been
financially supported by the Graduiertenkolleg 1523/2 : Quantum and
gravitational fields and by the European Science Foundation (ESF) within the project Random Geometry
of Large Interacting Systems and Statistical Physics.

%%%%%%%%%%%%%%%%%%%%%%%%%%%%%%%%%%%%%%%
%%%%%%%%%%%%%%%%%%%%%%%%%%%%%%%%%%%%%%%
\section{The set up}

As mentioned in the introduction this paper is concerned with
global properties of symmetric Dirichlet forms. In this section we
fix some notation and briefly recall the relevant notions and
objects. For further discussions and proofs we refer the reader to
the textbooks \cite{FOT} and \cite{CF}.

Throughout,  we let $m$ be a $\sigma$-finite measure on a measurable
space $X$. For any number $1\leq p\leq\infty$, we denote by
$L^p(X,m)$ the corresponding real valued Lebesgue space with norm
$\|\cdot\|_p$. The scalar product on $L^2(X,m)$  is denoted  by
$\as{\cdot,\cdot}$. The space of all measurable $m$-a.e. defined
real valued functions is denoted by $L^0(X,m)$. For $f,g\in L^0
(X,m)$  we will write $f\wedge g $ to denote the minimum of $f$ and
$g$ and $f \vee g$ to denote the maximum of $f$ and $g$.

A function $C: \RR \to \RR$ is a {\em normal contraction} if $C(0) =
0$ and $|C(x) - C(y)| \leq |x-y|$ holds for all $x,y \in  \RR.$  A
densely defined, non-negative,  closed symmetric bilinear form
 $$Q: D(Q) \times D(Q) \to \RR$$
on $L^2(X,m)$ is called a  {\em Dirichlet form} if $Q$ satisfies the
{\em contraction property}, that is for each normal contraction $C$
and each $u \in D(Q)$ we have $C \circ u \in D(Q)$ and the
inequality
$$Q(C\circ u,C\circ u) \leq Q(u,u)$$
holds. We will write $Q(u) := Q(u,u)$ for $u \in D(Q)$ and set $Q(u)
:= \infty$ if $u \not  \in D(Q)$. The {\em form norm} is given by
$\|u\|_Q:= (Q(u) + \|u\|_2^2)^{1/2}$ and the corresponding {\em form
inner product} will be denoted by $\as{\cdot,\cdot}_Q$.

A Dirichlet form gives rise to a positive self-adjoint operator $L$
on $L^2(X,m)$. This operator is uniquely determined by the equality
$$Q(u,v) = \as{Lu,v},$$
for each $u \in D(L)$ and each $v \in D(Q)$. Here $D(L)$ is the
domain of $L$.  Each such operator coming from a Dirichlet form
yields a strongly continuous  resolvent $(G_\alpha)_{\alpha >0}$ and
a strongly continuous semigroup $(T_t)_{t > 0}$ viz
$$G_\alpha := (L + \alpha)^{-1} \quad\text{ and }\quad T_t := e^{-tL}.$$
Both this resolvent and this semigroup are {\em markovian}, that is
for each $e \in L^2(X,m)$ with $0 \leq e  \leq 1$ the inequalities
$$0\leq \alpha G_\alpha e \leq 1 \text{ and } 0\leq T_t e \leq 1$$
hold. Thus, the resolvent and the semigroup can be extended to all
$L^p(X,m)$ spaces via monotone approximations with $L^2$-functions.
The resulting operators are contractions on $L^p(X,m)$. If not
stated otherwise we will abuse notation and write $T_t$ and
$G_\alpha$ for the extended semigroup and  resolvent on $L^p(X,m)$.
The corresponding generators on $L^p(X,m)$ will be denoted by
$L^{(p)}$  with domain $D(L^{(p)}).$

\begin{definition}[Stochastic completeness] \label{def:sc}
A Dirichlet form $Q$ is called {\em stochastically complete}  if the associated $L^\infty$ semigroup satisfies
$$T_t1 = 1, \text{ for each }t > 0.$$
\end{definition}

Stochastic completeness is equivalent to the validity of the
equality
$$\alpha G_\alpha 1 = 1,\text{ for one/each } \alpha >0.$$
For $f \in L^1_+(X,m)$, we introduce the {\em Green operator} as
$$Gf := \lim_{n \to \infty} \int_0 ^n T_s f ds,$$

where the integral is understood in the Bochner sense. Note that by
the positivity of $T_s$ this limit exists as an $m$-a.e. defined
function which might be infinite on a set of positive measure.

\begin{definition}[Recurrence/transience] \label{def:rec/trans}
 A Dirichlet form is called {\em recurrent} if for any $f \in L^1_+(X,m)$ we have  $Gf =0$ $m$-a.e. or $Gf = \infty$ $m$-a.e.  It is called {\em transient} if for each $f \in L_+^1(X,m)$ the inequality $Gf < \infty$ $m$-a.e. holds.
\end{definition}

Note that an arbitrary Dirichlet form might not be recurrent or
transient. Thus, let us recall that a measurable set $A$ is called
{\em $Q$-invariant} if for each $f \in D(Q)$ we have  $1_A f \in
D(Q)$ and the equality
$$Q(f) = Q(1_A f) + Q(1_{X \setminus A} f)$$
holds. Here $1_A$ is the indicator function of the set $A$. A
Dirichlet form $Q$ is called {\em irreducible} if each $Q$-invariant
set $A$ satisfies $m(A) = 0$ or $m(X \setminus A) = 0$. For
invariant Dirichlet forms a dichotomy holds. They are either
recurrent or transient but not both. For recent results on
irreducible Dirichlet forms we refer the reader to \cite{LSV}.

%%%%%%%%%%%%%%%%%%%%%%%%%%%%%%%%%%%%%%%%
\section{Extensions of Dirichlet forms and a maximum principle}\label{section;Extensions}
In concrete applications of Dirichlet forms one is often given two forms viz
one form corresponding to Dirichlet boundary conditions and the other
corresponding to Neumann boundary conditions.
Then, the  form with Neumann boundary conditions is an extension of the form
with Dirichlet boundary conditions.
Under suitable geometric conditions these two forms will actually agree.
In this section we provide an abstract study of such a situation.
 More precisely, we study a pair of Dirichlet forms with one form
 extending the other.
 We seek for conditions ensuring that the two forms  (or, equivalently, their
domains of definition) are actually equal.

\subsection{Hilbert space theory}

Throughout, we assume the following situation (S):

\begin{itemize}
\item[(S)]
Let $(X,m)$ be a $\sigma$-finite measure space. Let $Q$  with domain $\LD$
 and $\Qs$ with domain $\Ds$ be Dirichlet forms on $(X,m)$ such that
$\LD\subseteq \Ds$ and $Q$ and $\Qs$ agree  on $\LD$.
The  generators of $Q$ and $\Qs$ are denoted  by $L$ and $\Ls$ respectively.
 \end{itemize}

Under the assumption (S),  the inclusion
$$j: \LD\subseteq L^2 (X,m)\longrightarrow \Ds, u\mapsto u, $$
gives rise  (by taking adjoints w.r.t. $\Qs$) to the operator
 $L'$  with domain
$$D(L') =\{ u\in \Ds \,|\, \mbox{there exists $w\in L^2 (X,m)$ s.t.
$\Qs(u,v) = \langle w, v\rangle$ for all $v\in \LD$}\}$$
via
$$L' u = w.$$
The following is an immediate consequence of the definitions.
\begin{prop} \label{generator}
Assume (S). Then, $L'$ is an extension of both $L$ and $\Ls$, i.e., both the
domain of $L$ and of  $\Ls$ are contained in the domain of $L'$ and $L'$ agrees
with $L$ and $\Ls$ respectively on their domains.
\end{prop}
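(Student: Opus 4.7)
The plan is to verify the two containments $D(L) \subseteq D(L')$ and $D(\Ls) \subseteq D(L')$ separately, in each case simply unpacking the definition of $L'$. Since the statement is labeled as ``an immediate consequence of the definitions,'' I expect no genuine obstacle; the task is just to keep careful track of which domain of test functions each defining identity requires.

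For $u \in D(\Ls)$, the definition of the generator gives $u \in \Ds$ and
\[
\Qs(u,v) = \langle \Ls u, v \rangle \quad \text{for all } v \in \Ds.
\]
Since $\LD \subseteq \Ds$ by assumption (S), this identity holds in particular for all $v \in \LD$. Taking $w := \Ls u \in L^2(X,m)$ in the definition of $L'$, we conclude $u \in D(L')$ with $L' u = \Ls u$.

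For $u \in D(L)$, the definition of $L$ gives $u \in \LD$ and
\[
Q(u,v) = \langle L u, v \rangle \quad \text{for all } v \in \LD.
\]
Now $\LD \subseteq \Ds$ gives $u \in \Ds$, and the hypothesis that $Q$ and $\Qs$ agree on $\LD$ allows us to replace $Q(u,v)$ by $\Qs(u,v)$ on the left-hand side. Thus $\Qs(u,v) = \langle L u, v \rangle$ for all $v \in \LD$, so $u \in D(L')$ with $L' u = L u$. Together with the previous paragraph this gives the claimed extension property.

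The mildest subtlety worth noting is that the defining condition for $L'$ only requires the variational identity to hold for test functions $v$ in the smaller space $\LD$ (not in all of $\Ds$); it is precisely this asymmetry that makes room for $L'$ to extend both $L$ and $\Ls$ at once. Apart from paying attention to this point, the argument is purely definitional.
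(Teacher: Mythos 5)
Your proposal is correct and is exactly the definitional check the paper has in mind: the paper gives no written proof (it states the proposition is ``an immediate consequence of the definitions''), and your two-case unpacking — restricting the defining identity of $\Ls$ from test functions in $\Ds$ to the smaller space $\LD$, and using that $Q$ and $\Qs$ agree on $\LD$ for the $L$ case — is precisely the intended argument.
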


\begin{remark} In general the operator $L'$ will not be injective. For example,
if $X$ is a compact manifold and $Q$ is the form associated to the
Neumann-Laplacian, then $1$ will be an eigenfunction to the eigenvalues $0$ of
$L'$ (as $L'$ is an extension of the Neumann operator by the preceding
proposition).
\end{remark}

We define the space of $1$-harmonic functions by
$$\mathcal{H}:=\{u\in \Ds\, |\,  L' u = - u\}.$$
Recall that -- as $\Qs$ is a Dirichlet form --  the space $\Ds$ is a Hilbert space
with respect to the inner product
$$\langle u,v\rangle_{\Qs}:= \Qs (u,v)+ \langle u,v\rangle.$$
Here is the main result on the structure of $\Qs$ in terms of $Q$ (see Chapter
3, Section 3.3 of \cite{FOT}  for related  results).

\begin{theorem}[Decomposition theorem]  Assume (S). Then, both $\mathcal{H}$ and $\LD$ are closed
subspaces of
the Hilbert space $(\Ds, \langle\cdot,\cdot\rangle_{\Qs})$. Moreover, they are
orthogonal to each other and $$\Ds = \mathcal{H} \oplus \LD$$
holds.
\end{theorem}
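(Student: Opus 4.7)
The plan is to verify in turn that $\LD$ is closed, that $\mathcal{H}$ is closed, that the two subspaces are $\Qs$-orthogonal, and then to obtain the decomposition by applying the orthogonal projection theorem in the Hilbert space $(\Ds,\langle\cdot,\cdot\rangle_{\Qs})$. The whole argument is really an exercise in unpacking the definition of $L'$ supplied by the inclusion $\LD\hookrightarrow\Ds$.

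First I would check that $\LD$ is closed in $\Ds$. Since $Q$ and $\Qs$ agree on $\LD$ by assumption (S), the norm $\|\cdot\|_{\Qs}$ restricted to $\LD$ coincides with $\|\cdot\|_Q$. As $(Q,\LD)$ is a closed form, $\LD$ is complete for $\|\cdot\|_Q$, hence complete for $\|\cdot\|_{\Qs}$, hence closed in $\Ds$. For the closedness of $\mathcal{H}$, suppose $u_n\in\mathcal{H}$ with $u_n\to u$ in the $\Qs$-norm. For every $v\in\LD$ the definition of $L'$ gives $\Qs(u_n,v)=\langle -u_n,v\rangle$, and passing to the limit (using $\Qs$-continuity and $L^2$-continuity, which both follow from $\Qs$-norm convergence) yields $\Qs(u,v)=\langle -u,v\rangle$ for all $v\in\LD$. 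Thus $u\in D(L')$ and $L'u=-u$, i.e.\ $u\in\mathcal{H}$.

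Next I would verify orthogonality: for $h\in\mathcal{H}$ and $v\in\LD$,
\begin{equation*}
\langle h,v\rangle_{\Qs}=\Qs(h,v)+\langle h,v\rangle=\langle L'h,v\rangle+\langle h,v\rangle=\langle -h,v\rangle+\langle h,v\rangle=0,
\end{equation*}
where the second equality uses the defining property of $L'$ with test function $v\in\LD$. This is also what forces the sum to be direct, since any element in $\mathcal{H}\cap\LD$ has zero $\Qs$-norm.

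Finally, to get the full decomposition I would take any $u\in\Ds$, project it $\Qs$-orthogonally onto the closed subspace $\LD$ to obtain $v\in\LD$, and set $h:=u-v$. Then $h$ is $\Qs$-orthogonal to every $w\in\LD$, which reads
\begin{equation*}
\Qs(h,w)=-\langle h,w\rangle=\langle -h,w\rangle\qquad\text{for all }w\in\LD.
\end{equation*}
By the very definition of $L'$, this says $h\in D(L')$ with $L'h=-h$, i.e.\ $h\in\mathcal{H}$. Combined with orthogonality, this proves $\Ds=\mathcal{H}\oplus\LD$.

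I do not really foresee a serious obstacle: once one notices that the orthogonal complement of $\LD$ inside $(\Ds,\langle\cdot,\cdot\rangle_{\Qs})$ is by construction the eigenspace of $L'$ at $-1$, everything falls out of the projection theorem. The only point that deserves a moment of care is making sure that $\|\cdot\|_{\Qs}$ and $\|\cdot\|_Q$ coincide on $\LD$ so that closedness of $Q$ really does deliver closedness of $\LD$ in the ambient Hilbert space $\Ds$.
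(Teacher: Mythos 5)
Your proposal is correct and follows essentially the same route as the paper: the heart of both arguments is the observation that $\mathcal{H}$ is precisely the $\Qs$-orthogonal complement of $\LD$ in $(\Ds,\langle\cdot,\cdot\rangle_{\Qs})$, from which everything follows by the projection theorem. Your extra verifications (the norm identity on $\LD$ giving its closedness, and the direct check that $\mathcal{H}$ is closed, which is in fact redundant once $\mathcal{H}=\LD^{\perp}$ is established) are sound but only flesh out what the paper leaves implicit.
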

\begin{proof} By construction the space $\LD$ is a closed subspace of $\Ds$.
The remaining statements follow if we show that $\mathcal{H}$ is
just the orthogonal complement of  $\LD$ in $\Ds$.   Thus, let $u\in
\mathcal{H}$ and $v\in \LD$ be given. Then, invoking the definition
of $L'$, we obtain
$$\langle{u,v \rangle}_{\Qs} =\Qs (u,v)+ \langle u, v\rangle =\langle{(L'+1)u,v}\rangle= 0.$$
Thus, $\mathcal{H}$ is orthogonal to $\LD$. Conversely, assume that
$u\in \Ds$ is orthogonal to $\LD$. Then, $0 =\langle{u,v
\rangle}_{\Qs} =\Qs (u,v)+ \langle u, v\rangle $ holds for all $v\in
\LD$. By definition of $L'$ this gives $L' u = - u$ and hence $h\in
\mathcal{H}$ follows.
\end{proof}

We note the following immediate corollary of the theorem.

\begin{coro}\label{coro;3.3}
 Assume (S). Then, the following assertions are equivalent:
 \begin{itemize}
 \item[(i)] $\mathcal{D} = \Ds$.
 \item[(ii)] There does not exist a nontrivial  $u\in D (L')$ with $L'
u = - u$.
\end{itemize}
\end{coro}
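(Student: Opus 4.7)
The plan is to read the corollary as a direct reformulation of the decomposition theorem $\Ds = \mathcal{H} \oplus \LD$ just established. Since this is an orthogonal direct sum in the Hilbert space $(\Ds,\langle\cdot,\cdot\rangle_{\Qs})$, the equality $\LD = \Ds$ is equivalent to the vanishing of the orthogonal complement $\mathcal{H}$.

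More concretely, I would first note the easy direction (i)$\Rightarrow$(ii): if $\LD = \Ds$ and $u \in D(L')$ satisfies $L'u = -u$, then in particular $u \in \mathcal{H} \subseteq \Ds = \LD$, so by the orthogonality part of the decomposition theorem applied to $u$ against itself,
\begin{equation*}
\langle u,u\rangle_{\Qs} = 0,
\end{equation*}
forcing $u = 0$.

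For (ii)$\Rightarrow$(i), I would use the decomposition theorem to write an arbitrary $v \in \Ds$ as $v = h + w$ with $h \in \mathcal{H}$ and $w \in \LD$. Assumption (ii) says $\mathcal{H} = \{0\}$, hence $h = 0$ and $v = w \in \LD$. This gives $\Ds \subseteq \LD$; the reverse inclusion is part of assumption (S).

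There is no real obstacle here: the whole content has already been packaged into the decomposition theorem, and the corollary merely records the two immediate consequences of $\mathcal{H}$ being the orthogonal complement of $\LD$. The only small point to keep in mind is that the statement ``$L'u = -u$'' in (ii) is exactly the definition of $\mathcal{H}$, so no further unwinding of the operator $L'$ is needed.
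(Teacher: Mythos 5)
Your argument is exactly the intended one: the paper states the corollary as an immediate consequence of the Decomposition Theorem, namely that $\mathcal{D}=\Ds$ holds if and only if the orthogonal complement $\mathcal{H}=\{u\in\Ds \mid L'u=-u\}$ is trivial, which is what you prove in both directions (using positive definiteness of $\langle\cdot,\cdot\rangle_{\Qs}$ for (i)$\Rightarrow$(ii) and the direct sum for (ii)$\Rightarrow$(i)). The proposal is correct and matches the paper's approach.
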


We are now heading towards studying a different aspect of validity of  $\Qs= Q$.

\begin{lemma} Assume (S). If there exists an $u\in D(L') \cap L^1
(X,m)$ with $u\geq 0$ and $L' u \leq 0$ and $ L' u \neq 0$, then
$\mathcal{D}\neq \Ds$.
\end{lemma}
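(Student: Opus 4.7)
The plan is to argue by contradiction. Suppose $\mathcal{D}=\Ds$. Then $Q$ and $\Qs$ literally coincide as forms (they agree on $\mathcal{D}$ by (S) and now have the same domain), so $L=\Ls$. I claim that this forces $L'=L$. The inclusion $L \subseteq L'$ is Proposition~\ref{generator}. Conversely, for $u \in D(L')$ with $L'u = w \in L^2(X,m)$, the defining identity gives $\Qs(u,v) = \langle w,v\rangle$ for all $v \in \mathcal{D}$; since $\Qs = Q$ on $\mathcal{D}\times\mathcal{D}$ and $\mathcal{D}=D(Q)$, this is precisely the weak characterization of $u \in D(L)$ with $Lu=w$. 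So the hypothesis on $u$ becomes: $u \in D(L)\cap L^1_+$, $Lu \leq 0$ and $Lu \neq 0$.

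Set $f := -Lu$, so $f \in L^2_+(X,m)$ with $f \neq 0$. The next step is to exploit the semigroup $(T_t)_{t>0}$ generated by $L$. Since $u \in D(L)$, the standard $C_0$-semigroup calculus yields
\begin{equation*}
T_t u - u \;=\; -\int_0^t T_s L u \dd s \;=\; \int_0^t T_s f \dd s
\end{equation*}
in $L^2(X,m)$. As $(T_s)$ is positivity preserving, $T_s f \geq 0$ $m$-a.e., hence $T_t u \geq u \geq 0$ $m$-a.e.\ for every $t > 0$.

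The final step combines this with $L^1$-contractivity. Since $(T_t)$ is Markovian, it extends to a contraction on $L^1(X,m)$, and because $u \in L^1_+(X,m)$ we have $\|T_t u\|_1 \leq \|u\|_1$. Integrating the pointwise bound $T_t u \geq u \geq 0$ against $m$ yields $\|T_t u\|_1 \geq \|u\|_1$, so $\int_X (T_t u - u)\dd m = 0$. Since $T_t u - u \geq 0$, this forces $T_t u = u$ $m$-a.e., and therefore $Lu = -\lim_{t \downarrow 0}(T_t u - u)/t = 0$, contradicting $f \neq 0$.

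I expect the main obstacle to be the first step, namely checking carefully that $\mathcal{D}=\Ds$ collapses $L'$ onto $L$; this is a purely definitional unraveling but is the point at which the hypothesis is actually used. The rest is a clean interplay between $L^2$ spectral calculus and $L^1$ contractivity, which requires no geometric input and works in the generality of (S).
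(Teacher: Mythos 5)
Your proof is correct and uses the same underlying mechanism as the paper: reduce to $u\in D(L)$ with $Lu\leq 0$, $Lu\neq 0$, then derive a contradiction from positivity preservation together with $L^1$-contractivity by integrating over $X$. The only difference is cosmetic — you implement this with the semigroup identity $T_t u - u = -\int_0^t T_s Lu\, ds$, while the paper uses the one-line resolvent identity $(L+1)^{-1}Lu = u - (L+1)^{-1}u$ and integrates that instead.
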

\begin{proof} To simplify the argument we use the {notation}   $a < b$ for real valued measurable  functions on $a,b$ on $X$  to mean that $a(x) \leq b(x)$ for $m$ almost every $x$ and that $a$ and
$b$ differ on a set of positive $m$ measure.

Let $u\in D(L') \cap L^1 (X,m)$ with $u\geq 0$ and $L' u \leq 0$ and
$ L' u \neq 0$  be given and
assume $Q = \Qs$.  Then, by construction,  $L'$ agrees with $L$ and, in
particular, $L u = L' u < 0$ holds. As $(L +1)^{-1}$ is positivity preserving,
we then find
$$ (**)\:\;\: 0 >  (L +1)^{-1} L u = L (L +1)^{-1} u = u -  (L + 1)^{-1} u.$$
As $u$ belongs to $L^1 (X,m)$,  so does $(L + 1)^{-1} u$. As $(L + 1)^{-1}$ is a
contraction on $L^1 (X, m)$ and $u\geq 0$, we infer
$$\int_X (L + 1)^{-1} u \: dm \leq \int_X u \: dm.$$
Using this we obtain by integrating $(**)$
$$ 0 >\int_X (u - (L +1)^{-1} u) \: dm = \int_X u dm - \int_X (L +1)^{-1} u \;
dm \geq\int_X u \:dm - \int_X u \: dm = 0.$$
This is a contradiction.
\end{proof}

To provide a converse of this lemma, we will need one further concept.

\begin{definition}\label{def;MP}
 Assume (S).
Then, the \textit{maximum principle}  (MP) is said to hold if
$$ (L + 1)^{-1} f\leq (\Ls + 1)^{-1} f$$
holds for all $0 \leq f \in L^2(X,m)$.
\end{definition}

\begin{lemma} Assume (S) and (MP).  If $\mathcal{D}\neq \Ds$, then there exists
a nontrivial  $u\in  L^1 (X,m) \cap L^\infty (X,m) \cap \mathcal{H}
$ with $u\geq 0$ and $L' u \in L^1 (X,m)$. In particular,  such a
function  $u$ satisfies the inequality
$$\int_X L' u \, dm \neq 0.$$
\end{lemma}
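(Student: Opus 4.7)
The plan is to exploit (MP) by considering, for suitable non-negative $f$, the difference
$$u := (\Ls+1)^{-1}f - (L+1)^{-1}f,$$
which is non-negative by the maximum principle. Fixing $f \in L^1_+(X,m) \cap L^2(X,m) \cap L^\infty(X,m)$, the markovianity of $\alpha G_\alpha$ for both $L$ and $\Ls$ guarantees that both resolvents extend to contractions on every $L^p(X,m)$; in particular each of $(L+1)^{-1}f$ and $(\Ls+1)^{-1}f$ lies in $L^1(X,m) \cap L^\infty(X,m)$, and hence so does $u$.

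The central step is to show that $u \in \mathcal{H}$. For this, take any $g \in \LD$. Using $(L+1)^{-1}f \in D(L) \subseteq \LD$, $(\Ls+1)^{-1}f \in D(\Ls) \subseteq \Ds$, the coincidence of $Q$ and $\Qs$ on $\LD$, and the defining identities of $L$ and $\Ls$, one computes
\begin{align*}
\Qs(u,g) &= \langle f - (\Ls+1)^{-1}f,\, g\rangle - \langle f - (L+1)^{-1}f,\, g\rangle \\
&= -\langle u,\, g\rangle.
\end{align*}
By the definition of $L'$ this yields $u \in D(L')$ with $L' u = -u$, so $u \in \mathcal{H}$; in particular $L'u = -u \in L^1(X,m)$. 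I expect this computation, which depends crucially on the agreement of $Q$ and $\Qs$ on $\LD$, to be the main conceptual step; the earlier mapping properties are then a straightforward consequence of Markovianity.

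It remains to pick $f$ so that $u$ is non-trivial. If $u = 0$ for every admissible $f$, then by linearity $(L+1)^{-1}$ and $(\Ls+1)^{-1}$ agree on $L^1(X,m) \cap L^2(X,m) \cap L^\infty(X,m)$, which is dense in $L^2(X,m)$ by $\sigma$-finiteness of $m$; continuity of the resolvents on $L^2$ then forces $(L+1)^{-1} = (\Ls+1)^{-1}$ on $L^2(X,m)$, hence $L = \Ls$ and $\mathcal{D} = \Ds$, contradicting the hypothesis. Choosing such a good $f$, the resulting $u$ is a non-trivial element of $L^1(X,m) \cap L^\infty(X,m) \cap \mathcal{H}$ with $u \geq 0$ and $L'u \in L^1(X,m)$. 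Since $u \geq 0$ and $u \neq 0$ give $\int_X u\, dm > 0$, the final assertion follows from $\int_X L'u\, dm = -\int_X u\, dm \neq 0$.
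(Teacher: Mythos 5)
Your proposal is correct and follows essentially the same route as the paper: choose $u=(\Ls+1)^{-1}f-(L+1)^{-1}f$ for a suitable non-negative $f\in L^1\cap L^\infty$, use (MP) for positivity, markovianity for the $L^1\cap L^\infty$ membership, and the fact that $L'$ extends both $L$ and $\Ls$ to get $(L'+1)u=0$. The only cosmetic difference is that you verify $L'u=-u$ by unwinding the definition of $L'$ directly, whereas the paper simply invokes Proposition~\ref{generator}; your computation is exactly the proof of that proposition applied to the two resolvents.
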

\begin{proof}  By $\mathcal{D}\neq \Ds$ the operators $(L+1)^{-1} $ and $(\Ls+1)^{-1}$ are different. As $L^1 (X,m)\cap L^\infty (X,m)$ is dense in $L^2(X,m)$, there must  exist an $f\in L^1 (X,m)\cap L^\infty (X,m)$ with $f\geq
0$ and
$$ 0\neq (\Ls + 1)^{-1} f  - (L + 1)^{-1} f=:u.$$
Then, $u\geq 0$ holds  by (MP).
As $Q$ and $\Qs$ are Dirichlet forms and $f$ belongs to $L^1 (X,m)\cap L^\infty(X,m)$, we infer that $u$ belongs to $L^1 (X,m)\cap L^\infty (X,m)$. Moreover,
by Proposition \ref{generator} we have
$$(L' +1)  u =  f - f =0$$
and, hence,
$$L' u = - u\in L^1 (X,m).$$
As $u\geq 0$ with $u\neq 0$ holds, this shows  the first statement. As for the
last statement, we note that
 obviously
$$\int_X L' u \; dm = - \int_X  u \; dm < 0$$
holds.
This finishes the proof.
\end{proof}

Combining the previous two lemmas we immediately infer the following theorem.

\begin{theorem}[Characterization of $Q = \Qs$] \label{thm;2.2}
Assume (S) and (MP). Then, the following assertions are equivalent:
\begin{itemize}
\item[(i)] $\mathcal{D}\neq \Ds$.
\item[(ii)] There exists a nontrivial  $u\in L^1 (X,m)\cap D
(L')$ with $u\geq 0$ and $L' u \leq 0$ and $L' u \neq 0$.
\end{itemize}
If the assertions hold, then in $\mathrm{(ii)}$ the function $u$ can
be chosen in $L^1 (X,m)\cap L^\infty (X,m)$. Such a function  $u$
satisfies the inequality
$$\int_X L'u\, dm \neq 0. $$
\end{theorem}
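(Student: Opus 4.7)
The plan is to assemble Theorem \ref{thm;2.2} by combining the two lemmas immediately preceding it, which between them already contain every assertion of the theorem. The equivalence splits naturally into the two implications, and in fact only the forward direction (i) $\Rightarrow$ (ii) makes use of the maximum principle (MP).

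For (ii) $\Rightarrow$ (i) I invoke the first of the two lemmas directly: if a function $u \in D(L') \cap L^1(X,m)$ with $u \geq 0$, $L'u \leq 0$ and $L'u \neq 0$ exists, then $\mathcal{D} \neq \Ds$. No additional argument is required, and (MP) is not needed here; the integration-against-$m$ contradiction already rules out $Q = \Qs$.

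For (i) $\Rightarrow$ (ii) I apply the second lemma. Under (S) and (MP), that lemma produces a nontrivial $u \in L^1(X,m) \cap L^\infty(X,m) \cap \mathcal{H}$ with $u \geq 0$ and $L'u \in L^1(X,m)$. Since $u \in \mathcal{H}$ means $L'u = -u$, one sees that $L'u \leq 0$ (because $u \geq 0$) and $L'u \neq 0$ (because $u \not\equiv 0$). Thus $u$ satisfies (ii), and moreover it can be chosen in $L^1(X,m) \cap L^\infty(X,m)$, which gives the strengthened assertion in the theorem. The final integral inequality is then a one-line consequence: from $L'u = -u$ with $0 \leq u \in L^1(X,m)$ and $u \not\equiv 0$,
\[
\int_X L'u \, dm \;=\; -\int_X u \, dm \;<\; 0,
\]
hence in particular nonzero.

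There is no genuine obstacle in this assembly; the substantive work is already encapsulated in the two lemmas. If anything, the step most worth double-checking is making sure that the $u$ extracted from the second lemma really meets \emph{all} the bullet points of clause (ii) simultaneously — i.e.\ that $u$ lies in $D(L')$ (immediate, since $u \in \mathcal{H} \subset D(L')$), that $u \in L^1(X,m)$ (built in), and that $L'u = -u$ translates the harmonicity condition into the inequality form stated in (ii). Once these identifications are made the theorem follows without further computation.
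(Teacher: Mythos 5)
Your proposal is correct and follows exactly the paper's route: the paper also obtains Theorem \ref{thm;2.2} by combining the two preceding lemmas, with the first lemma giving (ii) $\Rightarrow$ (i) without (MP) and the second giving (i) $\Rightarrow$ (ii) together with the $L^1\cap L^\infty$ refinement and the nonvanishing integral via $L'u=-u$. Your spelled-out verification that the function from the second lemma satisfies all clauses of (ii) matches what the paper leaves implicit in its one-line proof.
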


\begin{remark} \label{ex situation} Let $X$ be a locally compact,
$\sigma$-compact topological space. Let $\Qs$ with domain $\Ds$ be a Dirichlet
form on $X$ such that $\Ds \cap C_c (X)$ is dense in $C_c (X)$ (with respect to
the supremum norm). Define $Q$ to be the closure of the
restriction of $\Qs$ to $\Ds\cap C_c (X)$. Then, $Q$ is a regular Dirichlet form
and $Q$ and $\Qs$ form a pair of Dirichlet forms satisfying (S). In such a
situation  (MP) is often known to hold  (e.g., for manifolds, metric graphs and graphs)
and our main result can be applied. For a detailed discussion see Section~\ref{section;MP} and Proposition~\ref{MP regular}.
\end{remark}

\subsection{An approximation criterion for the maximum principle } \label{section;MP}

In this section we will prove the maximum principle (MP) (see Definition~\ref{def;MP})
for the situation that the resolvents can be approximated via restrictions to an exhausting sequence $(G_n)$ in $X$.
For this some preparations are needed.
Let $(Q,D(Q))$ be  a Dirichlet form. For any measurable $G \subseteq X$, we
consider the form $Q_{G}$ given by
$$D(Q_{G}) = \{u \in D(Q)\, |\, u \equiv 0\ m\text{-a.e.\ on } X \setminus G\},
\quad Q_{G}(u)= Q(u).$$
Then, $(Q_{G},D(Q_{G}))$ is a Dirichlet form in the wide sense on $L^2(X,m)$, i.e., $D(Q_{G})$ is not necessarily dense in $L^2(X,m)$, see  \cite[Theorem 1.3.2]{FOT} and the discussion preceding it. The associated resolvent on $L^2(X,m)$, denoted by
$(L_{G}+\alpha)^{-1}$, may not be strongly continuous and, thus, may not give rise to a densely defined self-adjoint operator. However, below we work with the resolvent and the form only. The following proposition is taken from \cite{SV}. We include a proof for the convenience of the reader.

\begin{prop}\label{Domain monotonicity} Assume the situation described above.
For all nonnegative $f \in L^2(X,m)$ the following estimate holds
$$(L_G+\alpha)^{-1}f \leq (L+\alpha)^{-1}f .$$
\end{prop}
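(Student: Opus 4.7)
The plan is to use the variational characterization of the resolvent together with the first Beurling--Deny inequality (equivalently, the Markov property) for the Dirichlet form $Q$. Set
$$u := (L+\alpha)^{-1} f \quad\text{and}\quad v := (L_G+\alpha)^{-1} f.$$
Since $f \geq 0$ and both resolvents are positivity preserving, $u,v \geq 0$ $m$-a.e. The variational characterization states that for every $\varphi \in D(Q)$ and every $\psi \in D(Q_G)$,
$$Q(u,\varphi) + \alpha\langle u,\varphi\rangle = \langle f,\varphi\rangle \quad\text{and}\quad Q_G(v,\psi) + \alpha\langle v,\psi\rangle = \langle f,\psi\rangle.$$
Note that $Q_G$ is only a Dirichlet form in the wide sense, but the variational identity for $v$ still holds for all $\psi \in D(Q_G)$ (and $Q_G$ agrees with $Q$ on its domain). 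The strategy is to test both identities against $\varphi := (v-u)_+$ and subtract to conclude that $(v-u)_+ = 0$.

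First I would check that $\varphi = (v-u)_+$ is an admissible test function in both identities. Since $v-u \in D(Q)$ and the map $t \mapsto t_+$ is a normal contraction, $\varphi \in D(Q)$. Furthermore, on $X \setminus G$ we have $v = 0$ $m$-a.e.\ and $u \geq 0$, so $v - u \leq 0$ and hence $\varphi \equiv 0$ $m$-a.e.\ on $X \setminus G$. Therefore $\varphi \in D(Q_G)$.

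Subtracting the two variational identities with this test function gives
$$Q(v-u,\,(v-u)_+) + \alpha\bigl\langle v-u,\,(v-u)_+\bigr\rangle = 0.$$
Here the inner-product term equals $\alpha \|(v-u)_+\|_2^2 \geq 0$, and the form term is nonnegative by the first Beurling--Deny criterion: decomposing $v-u = (v-u)_+ - (v-u)_-$ one obtains
$$Q(v-u,\,(v-u)_+) = Q((v-u)_+) - Q((v-u)_-,(v-u)_+) \geq Q((v-u)_+) \geq 0,$$
since $Q((v-u)_-,(v-u)_+) \leq 0$ holds for any Dirichlet form. Hence both terms vanish, forcing $(v-u)_+ = 0$ $m$-a.e., which is the desired inequality $v \leq u$.

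The only point requiring care is that $Q_G$ need not be densely defined, so its ``generator'' $L_G$ and resolvent $(L_G+\alpha)^{-1}$ must be interpreted through the variational problem on the closed subspace that is the closure of $D(Q_G)$ in $L^2$; but this is already built into Proposition~\ref{Domain monotonicity} by using the form and its resolvent directly. The other small technicality is verifying that $(v-u)_+$ vanishes on $X\setminus G$; this is immediate once one remembers that $u \geq 0$ by positivity preservation of $(L+\alpha)^{-1}$, which is the cleanest place to use the nonnegativity of $f$.
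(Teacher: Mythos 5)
Your argument is correct, and it takes a genuinely different (and somewhat more direct) route than the paper's. The paper first identifies $(L_G+\alpha)^{-1}f$ with $P(L+\alpha)^{-1}f$, where $P$ is the orthogonal projection of $(D(Q),\langle\cdot,\cdot\rangle_Q)$ onto the closed subspace $D(Q_G)$, and then proves the lattice-type property $Pu\le u$ for nonnegative $u\in D(Q)$: since $u\wedge Pu\in D(Q_G)$ and $u-u\wedge Pu=(u-Pu)_+$, the contraction property gives $\|u-u\wedge Pu\|_Q\le\|u-Pu\|_Q$, and uniqueness of the distance minimizer forces $Pu=u\wedge Pu$. You bypass the projection entirely: you subtract the two variational identities and test with $(v-u)_+$, using that this function lies in $D(Q_G)$ (this is where the positivity of $u=(L+\alpha)^{-1}f$, hence of $f$, enters) together with the first Beurling--Deny inequality $Q(w_+,w_-)\le 0$. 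Both proofs rest on the same two ingredients --- the variational characterization of the resolvent of the possibly non-densely defined form $Q_G$, which the paper also invokes, and the normal-contraction property of $Q$ --- but yours is the standard weak-maximum-principle test-function scheme, while the paper's argument additionally produces the identity $(L_G+\alpha)^{-1}f=P(L+\alpha)^{-1}f$, which has independent interest. One cosmetic remark: the positivity of $v=(L_G+\alpha)^{-1}f$ that you assert at the outset is never used (as you note yourself, only $u\ge 0$ and $v=0$ on $X\setminus G$ matter), and since positivity preservation for the wide-sense resolvent would itself need a small justification, it is just as well that your proof does not rely on it.
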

\begin{proof}
To simplify notation we only consider the case $\alpha
= 1$. The case of general $\alpha > 0$ can be treated similarly. Let $P$ be the orthogonal projection of $D(Q)$ onto $D(Q_G)$
in $(D(Q),\as{\cdot,\cdot}_Q)$ and let $0\leq f \in L^2(X,m)$. Then,
$$(L_G+1)^{-1}f = P(L+1)^{-1}f$$
since for $u \in D(Q_G)$ the equation
$$\langle(L+1)^{-1}f,u \rangle_Q = \langle f,u \rangle = \langle(L_G+1)^{-1}f,u
\rangle_{Q_G}$$
holds. Resolvents of Dirichlet forms are positivity preserving. Therefore, the
above implies that showing $Pu \leq u$ for all positive $u \in D(Q)$ settles
the claim. Given  $u \geq 0$, the function $u \wedge Pu$ belongs to $D(Q_G)$.
Thus, we conclude
\begin{align*}
\| u - u\wedge Pu\|_Q^2 &=  \| (u -  Pu)_+\|_Q^2 = Q( (u -  Pu)_+) + \|(u -
Pu)_+\|^2\\
&\leq  Q( u -  Pu) + \|u -  Pu\|^2 = \| u -  Pu\|_Q^2.
\end{align*}
Since $Pu$ is the unique distance minimizing element in $D(Q_G),$ we obtain $Pu
= u\wedge Pu$. This finishes the proof.
\end{proof}

With the proposition at hand, we can prove an approximation result for general Dirichlet forms.

\begin{prop} \label{appr resolvent}
Let $(Q,D(Q))$ be a Dirichlet form on $(X,m)$. Suppose $(G_n)$ is an
increasing sequence of subsets of $X$ and set
$$\mathcal{C} = \bigcup_{n=1}^\infty D(Q_{G_n}).$$
Let $Q_\mathcal{C}$ be the restriction of $Q$ to the closure of $\mathcal{C}$
in $(D(Q),\aV{\cdot}_Q)$ and let $(L_{\mathcal{C}}+\alpha)^{-1}$ be the
associated resolvent. Then, for any $f \in L^2(X,m)$ and $\alpha >0$,
$$(L_{G_n} + \alpha)^{-1}f \to (L_{\mathcal{C}} + \alpha)^{-1}f \text{ as }
n\to \infty.$$
For nonnegative $f$ this convergence is monotone. In particular, if
$\mathcal{C}$ is dense in $D(Q)$, then the resolvent of $Q$ can be approximated as
above.
\end{prop}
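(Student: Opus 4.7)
My plan is to handle the nonnegative case first, where the convergence is monotone by Proposition \ref{Domain monotonicity}, and then extend to general $f$ by splitting $f = f_+ - f_-$.

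Fix $0 \le f \in L^2(X,m)$ and $\alpha > 0$, and write $u_n := (L_{G_n}+\alpha)^{-1}f$ and $u_\mathcal{C} := (L_\mathcal{C}+\alpha)^{-1}f$. First I would verify that each $D(Q_{G_n})$ is a closed subspace of $(D(Q),\|\cdot\|_Q)$ (if $v_k \to v$ in form norm, then an $m$-a.e.\ subsequence forces $v \equiv 0$ on $X \setminus G_n$) contained in $D(Q_\mathcal{C})$, and that $Q_\mathcal{C}$ is itself a Dirichlet form in the wide sense on $D(Q_\mathcal{C})$, since $\mathcal{C}$ is stable under normal contractions and this property passes to the form closure. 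Applying Proposition \ref{Domain monotonicity} with $Q_\mathcal{C}$ in the role of $Q$ and $Q_{G_n}$ in the role of $Q_G$ then yields
$$0 \le u_n \le u_{n+1} \le u_\mathcal{C} \quad m\text{-a.e.}$$
Hence $u_n$ increases monotonically to some $u \in L^2(X,m)$ with $u \le u_\mathcal{C}$, and dominated convergence upgrades this to strong $L^2$ convergence.

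To identify $u$ with $u_\mathcal{C}$, I would pass from $L^2$ convergence to weak convergence in the form topology. Testing the identity $Q(u_n,v) + \alpha\langle u_n,v\rangle = \langle f,v\rangle$ against $v = u_n$ gives a uniform bound on $\|u_n\|_Q$. Any weak subsequential limit of $(u_n)$ in $(D(Q),\|\cdot\|_Q)$ must coincide with $u$ by uniqueness of weak $L^2$ limits, so the full sequence satisfies $u_n \rightharpoonup u$ weakly in $D(Q)$. Because each $u_n$ lies in the convex subspace $\mathcal{C}$, and norm closure agrees with weak closure for convex sets in a Hilbert space, we conclude $u \in D(Q_\mathcal{C})$. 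Testing with arbitrary $v \in D(Q_{G_k})$ and passing $n \to \infty$ yields $Q(u,v) + \alpha\langle u,v\rangle = \langle f,v\rangle$ for all $v \in \mathcal{C}$, and then by density and continuity of $Q$ for all $v \in D(Q_\mathcal{C})$. The variational characterization of the resolvent forces $u = u_\mathcal{C}$.

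For general $f \in L^2(X,m)$ I would decompose $f = f_+ - f_-$ and invoke linearity of each resolvent, producing strong $L^2$ convergence (no longer monotone) of $(L_{G_n}+\alpha)^{-1}f$ to $(L_\mathcal{C}+\alpha)^{-1}f$. The delicate step will be the third paragraph above: producing the form bound, then leveraging it to identify the weak limit with an element of $D(Q_\mathcal{C})$ that solves the correct weak formulation. This is where the Dirichlet-form structure, the definition of $Q_\mathcal{C}$ as the closure of $\mathcal{C}$, and the positivity arguments behind Proposition \ref{Domain monotonicity} all have to cooperate.
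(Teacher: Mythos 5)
Your argument is correct, but after the common first step (monotonicity of $u_n=(L_{G_n}+\alpha)^{-1}f$ via Proposition \ref{Domain monotonicity}) you identify the limit by a genuinely different route than the paper. The paper reduces to bounded nonnegative $f$ so that the monotone limit is a.e.\ bounded, then shows $(u_n)$ is Cauchy in the form norm through the explicit computation $Q(u_n-u_m)+\alpha\|u_n-u_m\|^2=\langle f,u_n-u_m\rangle$, and finally identifies the limit as the resolvent via the variational (minimization) characterization of resolvents together with the density of $\mathcal{C}$ in $D(Q_\mathcal{C})$. You instead dominate by $u_\mathcal{C}=(L_\mathcal{C}+\alpha)^{-1}f$ (which makes the reduction to bounded $f$ unnecessary), extract a weak limit in $(D(Q),\|\cdot\|_Q)$ from the uniform form bound, use Mazur's theorem (weak closure equals norm closure for the convex set $\mathcal{C}$) to place it in $D(Q_\mathcal{C})$, and identify it through the weak formulation $Q(u,v)+\alpha\langle u,v\rangle=\langle f,v\rangle$ tested on $\mathcal{C}$ and extended by density. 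Both identifications are legitimate: the paper's Cauchy argument buys the stronger conclusion that $u_n\to u$ in $\|\cdot\|_Q$ (which is what underlies the Mosco-convergence remark following the proposition), whereas your route delivers exactly the strong $L^2$ convergence asserted in the statement with softer tools. One small point of care: your single citation of Proposition \ref{Domain monotonicity} really hides two applications -- with ambient form $Q_{G_{n+1}}$ to get $u_n\le u_{n+1}$ and with ambient form $Q_\mathcal{C}$ to get $u_n\le u_\mathcal{C}$ -- and both require observing that the proposition (whose proof uses only the projection in the form Hilbert space and the contraction property of $Q$) applies to these wide-sense forms and that the restriction of $Q_\mathcal{C}$ to functions vanishing off $G_n$ is again $Q_{G_n}$; this is the same mild extension the paper itself uses implicitly, so it is not a gap, but it merits an explicit sentence.
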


\begin{proof}
After decomposing $f$ into positive and negative part, we can restrict our attention to $f\geq 0$. As the norm of $(L_{G_n}+\alpha)^{-1}$ is uniformly
bounded by $\frac{1}{\alpha}$ and $L^2\cap L^\infty$ is dense in $L^2$, we may assume that $f$ is bounded.
We set $u_n:= (L_{G_n}+\alpha)^{-1}f $ and notice $u_{n}\in \mathcal{C}$, $n\ge1$.

We  first show that $u_n$ converges to a function $u \in D(Q_\mathcal{C})$. Proposition~\ref{Domain monotonicity} implies that the
sequence $u_n$ is $m$-a.e.\ monotone increasing. Furthermore, standard Dirichlet
form theory implies  $0 \leq u_n \leq \frac{1}{\alpha}\|f\|_\infty$. This shows
that $u_n$ is almost surely convergent to a bounded function $u$. The
construction of resolvents from forms yields $\|u_n\|\leq
\frac{1}{\alpha}\|f\|$. Therefore, the convergence $u_n \to u$ also holds in
$L^2(X,m)$. Let us compute for $n\geq m$
\begin{align*}
\lefteqn{Q(u_n-u_m) + \alpha \|u_n-u_m\|^2}
\\
&= Q(u_n) + \alpha \|u_n\|^2 + Q(u_m) + \alpha \|u_m\|^2 - 2 ( Q(u_n,u_m) +
\alpha \langle u_n,u_m \rangle ) \\
&= Q(u_n) + \alpha \|u_n\|^2 + Q(u_m) + \alpha \|u_m\|^2  - 2\langle
f,u_m\rangle\\
&= Q(u_n) + \alpha \|u_n\|^2 - \langle  f,u_m\rangle \\
&= \langle f, u_n - u_m \rangle \\
& \leq \|f\|^2 \|u_n-u_m\|^2 \to 0 \text{ as } n,m \to \infty.
\end{align*}
This shows that $(u_n)$ is a Cauchy sequence in $D(Q_\mathcal{C})$.
Since $D(Q_{\mathcal{C}})$ is complete, we conclude $u \in
D(Q_\mathcal{C})$ and $u_n \to u$ in $D(Q_{\mathcal{C}})$. Next, we
prove that $u$ is a  minimizer of
$$q:D(Q_\mathcal{C})\longrightarrow [0,\infty),\quad
v\mapsto Q(v) + \alpha\|v - \frac{1}{\alpha}f\|^2.$$
This  yields the statement since such a minimizer is known to be
unique and to agree with the resolvent by a variational
characterization of resolvents: Let $w \in \mathcal{C}$ be
arbitrary. Then, there exists an $n_0$ such that $w \in D(Q_{G_n})$
for all $n \geq n_0$. Since the $u_n$ are also resolvents and, thus,
minimizers of $q$ on $D(Q_{G_n})$ we obtain
\begin{align*}
Q(u) + \alpha\|u - \frac{1}{\alpha}f\|^2 &= \lim_{n\to \infty} Q(u_n) +
\alpha\|u_n - \frac{1}{\alpha}f\|^2 \leq  Q(w) + \alpha\|w - \frac{1}{\alpha}f\|^2.
\end{align*}
Since $w \in \mathcal{C}$ was arbitrary and $\mathcal{C}$ is dense in
$D(Q_\mathcal{C})$, we obtain the statement.
\end{proof}

\begin{remark}
With the above theorem we established Mosco convergence of the forms
$Q_{G_n}$ to $Q_\mathcal{C}$ by simple monotonicity arguments. This
idea may  already be found in the proof of Proposition 2.7 in
\cite{KL}. Instead we could have also used a more abstract
characterization of this convergence. See the Appendix of
\cite{CKK}.
\end{remark}

\begin{theorem}[Sufficient condition for maximum principle]\label{max principle} Assume  the forms $(Q,D(Q))$ and $(\Qs,D(\Qs))$ satisfy $(S)$. Let $(G_n)$ be an increasing sequence of subsets
of $X$ and let
$$\mathcal{C} = \bigcup_{n=1}^\infty D(\Qs_{G_n}).$$
Assume   $\mathcal{C} \subseteq D(Q)$ and that the closure of $\mathcal{C}$ coincides with
$D(Q)$. Then, $Q$ and $\Qs$ satisfy the maximum principle (MP).
\end{theorem}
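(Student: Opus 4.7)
The plan is to combine the domain-monotonicity of Proposition~\ref{Domain monotonicity} with the approximation result of Proposition~\ref{appr resolvent}, linking the two via the observation that under the hypothesis $\mathcal{C}\subseteq D(Q)$ the restricted forms $Q_{G_n}$ and $\Qs_{G_n}$ actually coincide.

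First I would verify this coincidence. Since $D(\Qs_{G_n})\subseteq \mathcal{C}\subseteq D(Q)$ and elements of $D(\Qs_{G_n})$ vanish $m$-a.e.\ off $G_n$, we have $D(\Qs_{G_n})\subseteq D(Q_{G_n})$. Conversely $D(Q_{G_n})\subseteq D(\Qs_{G_n})$ is automatic from $D(Q)\subseteq \Ds$. Because $Q$ and $\Qs$ agree on $D(Q)$, the two restricted forms coincide, and consequently the associated resolvents on $L^2(X,m)$ satisfy $(L_{G_n}+\alpha)^{-1}=(\Ls_{G_n}+\alpha)^{-1}$.

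Next I would apply Proposition~\ref{appr resolvent} to the Dirichlet form $\Qs$ with the increasing sequence $(G_n)$. For $f\in L^2(X,m)$ and $\alpha>0$ this gives
$$(\Ls_{G_n}+\alpha)^{-1}f\;\longrightarrow\;(\Ls_{\mathcal{C}}+\alpha)^{-1}f,$$
where $\Qs_{\mathcal{C}}$ denotes the restriction of $\Qs$ to the closure of $\mathcal{C}$ in $(\Ds,\aV{\cdot}_{\Qs})$, with monotone convergence when $f\ge 0$. Since $Q$ and $\Qs$ agree on $D(Q)$, the form norms $\aV{\cdot}_{Q}$ and $\aV{\cdot}_{\Qs}$ coincide on $\mathcal{C}\subseteq D(Q)$; as $D(Q)$ is complete in $\aV{\cdot}_{Q}$, the closure of $\mathcal{C}$ in $(\Ds,\aV{\cdot}_{\Qs})$ therefore equals its closure in $(D(Q),\aV{\cdot}_{Q})$, which by hypothesis is all of $D(Q)$. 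Hence $\Qs_{\mathcal{C}}=Q$ and $\Ls_{\mathcal{C}}=L$, so
$$(\Ls_{G_n}+\alpha)^{-1}f\;\longrightarrow\;(L+\alpha)^{-1}f.$$

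To conclude, Proposition~\ref{Domain monotonicity} applied to $\Qs$ yields $(\Ls_{G_n}+\alpha)^{-1}f\le (\Ls+\alpha)^{-1}f$ for every nonnegative $f\in L^2(X,m)$. Passing to the limit $n\to\infty$ and using the previous step produces $(L+\alpha)^{-1}f\le (\Ls+\alpha)^{-1}f$, which is (MP) (the Definition uses $\alpha=1$, but the same argument works for all $\alpha>0$). I do not anticipate a genuine obstacle; the only delicate point is verifying that the two natural notions of closure of $\mathcal{C}$ (inside $D(Q)$ versus inside $\Ds$) coincide, and this is exactly what allows one to identify $\Ls_{\mathcal{C}}$ with $L$ and transfer the one-sided estimate to the limit.
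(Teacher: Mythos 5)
Your proof is correct and follows essentially the same route as the paper's: apply Proposition~\ref{appr resolvent} to $\Qs$ along $(G_n)$, identify the limiting resolvent with $(L+\alpha)^{-1}$ via the density hypothesis, and pass to the limit in the estimate $(\Ls_{G_n}+\alpha)^{-1}f\leq(\Ls+\alpha)^{-1}f$ from Proposition~\ref{Domain monotonicity}. Your explicit verification that the $\aV{\cdot}_{\Qs}$-closure of $\mathcal{C}$ coincides with its $\aV{\cdot}_{Q}$-closure (hence with $D(Q)$) spells out a step the paper leaves implicit, and the observation $Q_{G_n}=\Qs_{G_n}$ is correct though not needed.
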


\begin{proof}
Our assumptions and Proposition \ref{appr resolvent} imply that
$(\Ls_{G_n}+\alpha)^{-1}$ converge strongly to $(L+\alpha)^{-1}$.
Furthermore, Proposition~\ref{Domain monotonicity} shows that for nonnegative
$f\in L^2(X,m)$ this convergence is monotone and
$$(\Ls_{G_n}+\alpha)^{-1}f\leq (\Ls  +\alpha)^{-1}f.$$
This proves the claim.
\end{proof}

%%%%%%%%%%%%%%%%%%%%%%%%%%%%%%%%%%%%%%%%
%%%%%%%%%%%%%%%%%%%%%%%%%%%%%%%%%%%%%%%%
\section{Stochastic completeness}\label{section;SC}
In this section we  present a characterization of stochastic completeness (see Definition \ref{def:rec/trans}) in Theorem~\ref{thm;SC}. We give a proof by three lemmas
which immediately imply the theorem.

\begin{theorem}[Characterization of stochastic completeness]  \label{thm;SC}
Let $Q$ be a Dirichlet form  with associated self-adjoint operator
$L$. Let $L^{(1)}$ with domain $D(L^{(1)})$ be the generator of the
$L^1$-semigroup  associated to $Q$. Let $L^{(\infty)}$ be the
adjoint of $L^{(1)}$. Then, the following assertions are equivalent:
\begin{itemize}
\item[(i)]  $Q$ is stochastically complete.
\item[(ii)] For all $u\in D(L^{(1)})$ the following equality holds
$$\int_X L^{(1)} u \; dm =0.$$
\item[(iii)]  The constant function $1$ belongs to the domain of $L^{(\infty)}$
and $L^{(\infty)} 1  =0$ holds.
\item[(iv)] For all $u \in D(L) \cap L^1(X,m)$ such that $Lu \in L^1(X,m)$
the following equality holds
$$\int_X L u \; dm =0.$$
\end{itemize}
\end{theorem}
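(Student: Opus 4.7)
I would split the proof into three independent equivalences, matching the authors' mention of \emph{three lemmas}: namely (ii)$\Leftrightarrow$(iii), (i)$\Leftrightarrow$(ii), and (ii)$\Leftrightarrow$(iv).

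The equivalence (ii)$\Leftrightarrow$(iii) is essentially a restatement of the definition of the Banach space adjoint: $1\in D(L^{(\infty)})$ with $L^{(\infty)}1=0$ means precisely that the functional $u\mapsto\int_X L^{(1)}u\,dm$ on $D(L^{(1)})$ extends boundedly to $L^1(X,m)$ and is represented by $0\in L^\infty(X,m)$, i.e.\ that it vanishes identically.

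For (i)$\Leftrightarrow$(ii) the bridge is the duality identity
$$\int_X T_t u\,dm=\langle T_t u,1\rangle=\langle u,T_t 1\rangle=\int_X u\cdot T_t 1\,dm,$$
valid for $u\in L^1(X,m)$, which uses that the $L^\infty$-semigroup is the adjoint of the $L^1$-semigroup. Hence stochastic completeness $T_t 1=1$ is equivalent to preservation of the integral $\int_X T_t u\,dm=\int_X u\,dm$ on all of $L^1(X,m)$. Assuming (i), I differentiate this identity at $t=0$, using the $L^1$-convergence $(T_t u-u)/t\to -L^{(1)}u$ for $u\in D(L^{(1)})$, to get (ii). Conversely, (ii) combined with the fundamental identity $T_t u-u=-\int_0^t T_s L^{(1)}u\,ds$ makes $\int_X T_t u\,dm$ constant in $t$ on $D(L^{(1)})$, and (i) then follows by density of $D(L^{(1)})$ in $L^1(X,m)$ together with $L^1$-contractivity of $T_t$.

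The equivalence (ii)$\Leftrightarrow$(iv) carries the only real content. The direction (ii)$\Rightarrow$(iv) rests on the observation that any $u\in D(L)\cap L^1(X,m)$ with $Lu\in L^1(X,m)$ automatically lies in $D(L^{(1)})$ with $L^{(1)}u=Lu$: the Bochner identity $T_t u-u=-\int_0^t T_s Lu\,ds$ makes sense in $L^1$ since $Lu\in L^1$ and $T_s$ is an $L^1$-contraction, and the Ces\`aro mean converges to $Lu$ in $L^1$ by strong continuity of the $L^1$-semigroup. For (iv)$\Rightarrow$(ii), given $u\in D(L^{(1)})$ I set $v:=(L^{(1)}+1)u\in L^1(X,m)$ and approximate $v$ in $L^1$ by $v_n\in L^1(X,m)\cap L^2(X,m)$. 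Then $u_n:=G_1 v_n\in D(L)\cap L^1(X,m)$ with $Lu_n=v_n-u_n\in L^1(X,m)$, so (iv) yields $\int_X v_n\,dm=\int_X u_n\,dm$. Passing to the limit via $L^1$-continuity of $G_1$ gives $\int_X v\,dm=\int_X u\,dm$, which is $\int_X L^{(1)}u\,dm=0$.

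The main obstacle is the core argument underlying (iv)$\Rightarrow$(ii): one must produce approximants which simultaneously lie in $L^1(X,m)$, in $L^2(X,m)$, and in $D(L)$ with $L$-image back in $L^1(X,m)$. Applying the resolvent $G_1$ to $L^1\cap L^2$ accomplishes all three at once, and the identity $LG_1=I-G_1$ delivers $L^1$-integrability of the $L$-image for free, so the difficulty is one of careful bookkeeping rather than genuine depth.
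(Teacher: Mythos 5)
Your proposal is correct, but its core runs along a different axis than the paper's proof. The paper's decomposition is (i)$\Leftrightarrow$(iv) (Lemma \ref{SC-two}), (ii)$\Leftrightarrow$(iv) (Lemma \ref{SC-one}) and (ii)$\Leftrightarrow$(iii) (Lemma \ref{SC-three}): the central equivalence (i)$\Leftrightarrow$(iv) is proved purely at the level of the form and the $L^2$-resolvent, using the sequence $e_n=(L+1)^{-1}g_n$ (respectively the sequence from Proposition \ref{choice-e-n}), the identity $Q(e_n,u)=\langle g_n-e_n,u\rangle$ and Lebesgue's theorem, and (ii) is then reached from (iv) by the denseness statement of Lemma \ref{SC-one}, whose proof rests on consistency of the $L^1$- and $L^2$-resolvents. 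You instead prove (i)$\Leftrightarrow$(ii) directly by semigroup duality ($\int_X T_t u\,dm=\int_X u\,T_t^{(\infty)}1\,dm$) and differentiation of $t\mapsto\int_X T_t u\,dm$, and then deduce (iv); this is precisely the alternative sketched in the paper's remark following the proof, so the authors were aware of it but chose the other route. Your (ii)$\Leftrightarrow$(iv) step is essentially Lemma \ref{SC-one}: the direction (iv)$\Rightarrow$(ii) via $u_n=G_1 v_n$ and $LG_1=I-G_1$ is the paper's denseness argument verbatim, while for the inclusion $\{u\in D(L)\cap L^1: Lu\in L^1\}\subseteq D(L^{(1)})$ you use the Bochner identity and Ces\`aro means where the paper uses resolvent consistency --- both work, with the same consistency facts underneath. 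What the two routes buy: the paper's argument needs only the $L^2$-theory, positivity and dominated convergence, and avoids invoking strong continuity/differentiability of the $L^1$-semigroup and the identification of the $L^\infty$-semigroup in the definition of stochastic completeness with the adjoint of $T_t^{(1)}$; your argument is shorter and more transparent at the semigroup level, but it leans on exactly those standard facts (which the paper only records in remarks, e.g.\ that the adjoint of $L^{(1)}$ generates the $L^\infty$-resolvent of $Q$), so a fully self-contained write-up would have to supply them. As a small point, in (ii)$\Leftrightarrow$(iii) the relevant reformulation is simply that $1\in D(L^{(\infty)})$ with $L^{(\infty)}1=0$ means $\int_X L^{(1)}u\,dm=\langle u,0\rangle=0$ for all $u\in D(L^{(1)})$; the phrase about the functional extending boundedly to $L^1$ is not needed and slightly misstates the adjoint's definition, though your conclusion is the right one.
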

 The proof  follows from the subsequent three lemmas.

\begin{remark}
\begin{itemize}
 \item In the context of stochastic completeness, (iv) is already
discussed in the literature for weighted manifolds \cite{GM}.
\item It can be shown that the adjoint of $L^{(1)}$ is indeed the generator of the $L^\infty$ resolvent which is associated with $Q$.
\end{itemize}

\end{remark}

The following lemma is certainly well known. For an $L^1$-Version see  \cite[Prop.2.4.2]{BH}. It may be useful in other contexts as well. We include
a proof for the convenience of the reader.

\begin{lemma}\label{SC-one} Let $Q$ be a Dirichlet form and let $L$ be the associated self-adjoint operator. For $p\in [1,\infty)$,  let
$L^{(p)}$ with domain $D(L^{(p)})$ be the generator of the  $L^p$-semigroup  associated to $Q$. Then,
$$ S:=\{ f\in  L^p (X,m) \cap D (L)\, | \,  L u \in L^p
(X,m)\}$$
is contained in $ D (L)\cap D (L^{(p)})$
and $L$ agrees with $L^{(p)}$ on $S$ and to any $f\in  D(L^{(p)})$
there exists a sequence $(f_n)$ in $S$ with $f_n\to f$ in $L^p (X,m) $ and $L
f_n \to L^{(p)}f$ in $L^p (X,m)$.
\end{lemma}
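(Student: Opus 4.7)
My plan is to reduce both claims to simple manipulations at the level of the resolvent $G_1 = (L+1)^{-1}$, exploiting that the $L^p$-semigroup is built from the $L^2$-semigroup by monotone extension. The key consistency fact I will need is: for $g \in L^p(X,m) \cap L^2(X,m)$, the function $G_1 g$ (computed in $L^2$) coincides with $(L^{(p)}+1)^{-1} g$ and lies again in $L^p(X,m) \cap L^2(X,m)$.

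Granting this consistency, the first claim is immediate. For $f \in S$ I would form $g := (L+1)f = Lf + f$, which lies in $L^2(X,m) \cap L^p(X,m)$ by the definition of $S$; since $f = G_1 g$ in $L^2$ (because $f \in D(L)$), consistency forces $f = (L^{(p)}+1)^{-1} g$, and hence $f \in D(L^{(p)})$ with $L^{(p)} f = g - f = Lf$.

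For the approximation claim, I would start from any $f \in D(L^{(p)})$, let $g := (L^{(p)}+1) f \in L^p(X,m)$, and approximate $g$ by $g_n \in L^p(X,m) \cap L^2(X,m)$ in $L^p$; such an approximation exists by $\sigma$-finiteness of $m$ together with a standard truncation argument. Setting $f_n := G_1 g_n$ places $f_n$ in $D(L)$, and by consistency also in $L^p(X,m)$, so that $Lf_n = g_n - f_n$ belongs to $L^p(X,m)$ and $f_n \in S$. Contractivity of $(L^{(p)}+1)^{-1}$ on $L^p$ then gives $f_n = (L^{(p)}+1)^{-1} g_n \to (L^{(p)}+1)^{-1} g = f$ in $L^p$, and consequently $Lf_n = g_n - f_n \to g - f = L^{(p)} f$ in $L^p$.

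The main point requiring care is the consistency statement itself, which is what really justifies calling $L^{(p)}$ \emph{the} $L^p$-generator associated with $Q$. I would derive it by Laplace-transforming the corresponding consistency of the semigroups, $T_t g = T_t^{(p)} g$ for $g \in L^p(X,m) \cap L^2(X,m)$, using Fubini to interchange the time integral with evaluation; alternatively, one may invoke the standard fact that if strongly continuous contraction semigroups on two $L^p$ spaces are compatible on the intersection, then so are their resolvents.
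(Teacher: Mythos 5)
Your argument is correct and follows essentially the same route as the paper's proof: both parts reduce to the identity $f=(L+1)^{-1}g=(L^{(p)}+1)^{-1}g$ for $g\in L^p\cap L^2$, with the denseness statement obtained by approximating $g=(L^{(p)}+1)f$ in $L^p$ by $g_n\in L^p\cap L^2$ and using continuity of the $L^p$-resolvent. The only difference is that you additionally sketch a justification of the resolvent consistency (via Laplace transform of the compatible semigroups), which the paper simply invokes.
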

\begin{proof} We first show $S\subset D(L^{(p)})$ and $Lf = L^{(p)} f$
for $f\in S$, (which implies the inclusion $S\subset   D (L)\cap D
(L^{(p)})$): For $f\in S$ the function $g:= (L +1) f$ belongs to
$L^p (X,m)\cap L^2 (X,m)$. Thus, we can  apply both $(L+1)^{-1}$ and
$(L^{(p)} +1)^{-1} $ to $g$ and by consistency of the resolvents we
obtain
$$ f = (L+1)^{-1} g= (L^{(p)} +1)^{-1} g.$$
This gives  $f\in D(L^{(p)})$ as well as
$$ L f = (L +1 -1) (L+1)^{-1} g = g - (L+1)^{-1} g = g - (L^{(p)} +1)^{-1}g =
L^{(p)} f.$$

We now show  denseness:  Let $f\in D(L^{(p)})$ be given.
Then, $g := (L^{(p)} +1) f$ exists and $f = (L^{(p)} +1)^{-1}g$ holds.  By
$\sigma$-finiteness we can  choose a sequence $g_n \in L^p (X,m)\cap L^2
(X,m)$ with $g_n \to g$ in $L^p (X,m)$.  By continuity of resolvents we then
have
$$ f_n :=(L^{(p)} +1)^{-1} g_n\to (L^{(p)} +1)^{-1} g = f$$
in $L^p (X,m)$.  By consistency of resolvents we furthermore obtain
$$ f_n= (L+1)^{-1} g_n = (L^{(p)} +1)^{-1} g_n \in D (L) \cap
D(L^{(p)})$$
and
$$L f_n =   g_n - f_n = L^{(p)} f_n. $$
Putting these statements together we infer
$$ L f_n = g_n - f_n \to g - f = L^{(p)} f$$
in $L^p (X,m)$. This finishes the proof.
\end{proof}

\begin{lemma} \label{SC-two} Let $Q$ be a Dirichlet form  with  associated self-adjoint operator $L$. Then, the following
assertions are equivalent:
\begin{itemize}
\item[(i)]  $Q$ is stochastically complete.
\item[(ii)] For all $u\in D(L)\cap L^1 (X,m)$ with $L u\in L^1 (X,m)$
the following equality holds
$$\int_X Lu \; dm =0.$$
\end{itemize}
\end{lemma}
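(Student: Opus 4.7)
The plan is to route both implications through the equivalent reformulation of stochastic completeness as \emph{mass preservation of the resolvent}:
\begin{equation*}
\int_X G_1 f\, dm = \int_X f\, dm \quad \text{for all } f \in L^1(X,m).
\end{equation*}
By the discussion following Definition~\ref{def:sc}, stochastic completeness is the same as $G_1 1 = 1$ in $L^\infty(X,m)$. Dualizing via the standard pairing $\langle G_1 f, g\rangle = \langle f, G_1 g\rangle$ for $f \in L^1(X,m)$ and $g \in L^\infty(X,m)$ (which expresses that the $L^\infty$-extension of the resolvent is the adjoint of the $L^1$-extension), this is in turn equivalent to the displayed mass preservation.

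For $(i)\Rightarrow(ii)$, take $u \in D(L)\cap L^1(X,m)$ with $Lu \in L^1(X,m)$ and set $f := (L+1)u = Lu + u$, so $f \in L^1(X,m)$. Lemma~\ref{SC-one} applied with $p=1$ then shows that the $L^2$-resolvent and the $L^1$-resolvent of $f$ coincide, whence $u = G_1 f$. Integrating the identity $Lu = f - u$ and using mass preservation gives $\int_X Lu\, dm = \int_X f\, dm - \int_X G_1 f\, dm = 0$.

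For $(ii)\Rightarrow(i)$, let $f\in L^1(X,m)\cap L^2(X,m)$ and set $u := G_1 f$. Since $G_1$ is an $L^1$-contraction we have $u\in L^1(X,m)$, and $u \in D(L)$ together with $Lu = f - u$ forces $Lu \in L^1(X,m)$. Hypothesis~(ii) then yields $\int_X u\, dm = \int_X f\, dm$, i.e.\ $\int_X G_1 f\, dm = \int_X f\, dm$. By density of $L^1 \cap L^2$ in $L^1$ and $L^1$-continuity of $G_1$, the equality extends to all $f \in L^1(X,m)$, which is mass preservation and hence stochastic completeness.

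The only mildly delicate point is cross-space consistency: one needs that the $L^2$-resolvent applied to an element of $L^1\cap L^2$ agrees with the $L^1$-resolvent (packaged in Lemma~\ref{SC-one}) and that the $L^\infty$-extension is the adjoint of the $L^1$-extension. Both statements are standard for the Markovian extensions of a symmetric Dirichlet form and require no further computation, so the proof reduces to the short bookkeeping sketched above.
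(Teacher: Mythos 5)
Your argument is correct, but it follows a genuinely different route from the paper's. The paper proves the lemma through the quadratic form: for (i)$\Rightarrow$(ii) it sets $e_n=(L+1)^{-1}g_n$ with $g_n\uparrow 1$, shows $\lim_n Q(e_n,u)=0$ for $u\in D(Q)\cap L^1(X,m)$, and then identifies $Q(e_n,u)=\langle e_n,Lu\rangle\to\int_X Lu\,dm$ by dominated convergence; for (ii)$\Rightarrow$(i) it applies (ii) to $v=(L+1)^{-1}f$ for a single strictly positive $f\in L^1\cap L^2$ and uses a sequence $e_n\to 1$ from Proposition~\ref{choice-e-n} to conclude $(L+1)^{-1}1=1$. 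You dispense with the form and the cut-off sequence altogether, reformulating stochastic completeness as mass conservation of the $L^1$-resolvent via the $L^1$--$L^\infty$ adjoint relation, after which both implications are bookkeeping with $u=G_1f$, $f=(L+1)u$; this is essentially the resolvent analogue of the semigroup-duality argument sketched in the remark after Theorem~\ref{thm;SC}. What your route buys is brevity and independence from the appendix construction of $(e_n)$; what it costs is reliance on two consistency facts: that the $L^2$- and $L^1$-resolvents agree on $L^1\cap L^2$ (legitimately extractable from Lemma~\ref{SC-one}, which precedes this lemma, or from standard resolvent consistency) and that the $L^\infty$-extension of $G_1$ appearing in the definition of stochastic completeness is the adjoint of the $L^1$-extension --- a fact the paper only records as a remark (``it can be shown''). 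For a fully self-contained version you should verify the one instance you actually need, $\langle G_1f,1\rangle=\langle f,G_1 1\rangle$ for $0\le f\in L^1\cap L^2$, directly by monotone approximation of $1$ by $L^2$-functions together with $L^2$-symmetry and positivity preservation. Note also that the paper's (ii)$\Rightarrow$(i) avoids your density step by using one strictly positive test function, and its (i)$\Rightarrow$(ii) yields the slightly stronger intermediate statement about $\lim_n Q(e_n,u)=0$ recorded in the remark following the lemma.
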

\begin{proof} (i) $\Longrightarrow$ (ii): We choose a sequence $(g_n)
\subset L^2(X,m)$ such that $0\leq g_n \leq g_{n+1}\leq 1$ and  $g_n \to 1$
$m$-almost everywhere. Furthermore, let $e_n = (L+1)^{-1}g_n$. By stochastic
completeness $(e_n)$ converges to $1$ $m$-almost everywhere. Thus, for any $u \in
D(Q)\cap L^1(X,m)$, we obtain by Lebesgue's theorem
$$\lim_{n\to \infty} Q(e_n,u) = \lim_{n\to \infty}\langle g_n - e_n,u  \rangle
= 0.$$ For $u$ that satisfies  additionally $u\in D(L)$ and $Lu \in
L^1 (X,m)$, we
 obtain by Lebesgue theorem
\begin{eqnarray*}
0 = \lim_{n\to \infty} Q (e_n, u)
= \lim_{n\to \infty} \langle e_n, L u\rangle
=\lim_{n\to \infty} \int_X e_n L u dm
= \int_X Lu \: dm.
\end{eqnarray*}
This finishes the proof of this implication.

(ii) $\Longrightarrow$ (i): By $\sigma$-finiteness of $(X,m)$ and Proposition~\ref{choice-e-n} there exists  a sequence $(e_n)$ in $D(Q)$ with $0\leq e_n
\leq 1$ and $e_n \to 1$ $m$-almost surely. Now, choose an arbitrary  $f\in L^1
(X,m)\cap L^2 (X,m)$ with $f>0$. (Such a choice is possible by
$\sigma$-finiteness). Set $v:= (L +1)^{-1} f$. By construction,  $v$ belongs to
the domain of $L$. Moreover, as $(L+1)^{-1}$ is a contraction on $L^p (X,m)$
for any $p\geq 1$ and $f$ belongs to $L^1 (X,m)\cap L^2 (X,m)$,  we infer $v\in
L^1 (X,m)\cap L^2 (X,m)$. Furthermore, we obviously have
$$L v =  (L+1 - 1) v= f - v\in L^1 (X,m).$$
Thus, by (ii) we then obtain
$$\int_X L v \; dm = 0.$$
This gives
\begin{align*}
0 = \int_X L v  dm
= \lim_{n\to \infty}\int_X e_n L v   dm
= \lim_{n\to \infty} \langle e_n, f - (L+1)^{-1}f \rangle
= \lim_{n\to \infty} \langle e_n - (L+1)^{-1}e_n, f \rangle.
\end{align*}
Since $f$ was chosen strictly positive, this yields $(L+1)^{-1} 1 = 1$ which is
equivalent to stochastic completeness.
\end{proof}

\begin{remark}
The above proof shows that stochastic completeness is equivalent to the
existence of a sequence $(e_n) $ in $ D(Q)$ satisfying $0\leq e_n \leq 1$,
$e_n \to 1$ $m$-almost everywhere and
$$  \lim_{n\to \infty} Q(e_n,u) = 0,$$
for any $u \in D(Q)\cap L^1(X,m)$. This part of the proof is taken from Theorem
1.6.6 of \cite{FOT}.
\end{remark}

\begin{lemma}\label{SC-three} Let $Q$ be a Dirichlet form. Let $L^{(1)}$ with domain $D(L^{(1)})$ be the generator of the  $L^1$-semigroup  associated to $Q$.
Let $L^{(\infty)}$ be the adjoint of $L^{(1)}$. Then, the following assertions
are equivalent:
\begin{itemize}
\item[(i)]  For all $u\in D(L^{(1)})$ the equality
$\int_X L^{(1)} u \; dm =0$
holds.
\item[(ii)]  The constant function $1$ belongs to the domain of $L^{(\infty)}$
and $L^{(\infty)} 1  =0$ holds.
\end{itemize}
\end{lemma}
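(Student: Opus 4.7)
The statement is essentially a tautological consequence of the definition of the adjoint of an unbounded operator with respect to the duality pairing $\langle L^1,L^\infty\rangle$, so the plan is simply to unwind that definition carefully.

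First I would recall that, since $L^{(1)}$ generates a strongly continuous semigroup on $L^1(X,m)$, it is densely defined and closed, so its Banach space adjoint $L^{(\infty)}$, acting on the dual $L^1(X,m)^\ast = L^\infty(X,m)$, is well-defined via
$$D(L^{(\infty)}) = \Bigl\{ f \in L^\infty(X,m) \,\Big|\, \exists\, g \in L^\infty(X,m) \text{ s.t. } \int_X (L^{(1)} u)\,f\, dm = \int_X u\, g\, dm \text{ for all } u \in D(L^{(1)})\Bigr\},$$
with $L^{(\infty)} f := g$ (uniqueness of $g$ following from the density of $D(L^{(1)})$ in $L^1$). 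Note that the constant function $1$ lies in $L^\infty(X,m)$ irrespective of whether $m(X)$ is finite, so testing against $f = 1$ is legitimate.

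Next I would prove the implication (ii) $\Longrightarrow$ (i). Assuming $1 \in D(L^{(\infty)})$ and $L^{(\infty)} 1 = 0$, the defining identity of the adjoint applied with $f = 1$ and $g = 0$ yields, for every $u \in D(L^{(1)})$,
$$\int_X L^{(1)} u \, dm = \int_X (L^{(1)}u)\cdot 1 \, dm = \int_X u \cdot 0 \, dm = 0,$$
which is (i). Note that $L^{(1)} u \in L^1(X,m)$, so the integral on the left makes sense.

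Finally, for (i) $\Longrightarrow$ (ii), I would observe that (i) rewrites, for every $u \in D(L^{(1)})$, as
$$\int_X (L^{(1)}u)\cdot 1 \, dm = 0 = \int_X u \cdot 0 \, dm.$$
This is precisely the identity demanded in the definition of $D(L^{(\infty)})$ for the pair $f = 1$, $g = 0$, so $1 \in D(L^{(\infty)})$ and $L^{(\infty)} 1 = 0$. Since nothing beyond the definition of the adjoint is used, I do not expect any genuine obstacle; the only point warranting care is the verification that $1$ qualifies as an element of $L^\infty(X,m)$ and that $g = 0$ is a valid choice in $L^\infty(X,m)$, both of which are trivial.
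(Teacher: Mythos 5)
Your proposal is correct and is exactly the argument the paper has in mind: the paper's proof of this lemma is the one-line remark ``This is immediate from the definitions,'' and your write-up simply spells out that unwinding of the adjoint's definition (both directions being the statement that the pair $f=1$, $g=0$ satisfies the defining duality identity). No issues.
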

\begin{proof} This is immediate from the definitions.
\end{proof}

\begin{proof}[Proof of Theorem 1] The equivalence of (i) and (iv) is shown
in Lemma \ref{SC-two} and (ii) $\Longleftrightarrow$ (iv) follows from Lemma
\ref{SC-one}. Finally, the equivalence of (ii) and (iii) is given by Lemma
\ref{SC-three}.
\end{proof}

\begin{remark} We proved the equivalence of (i) and (ii) in Theorem
\ref{thm;SC} via (iv) and a denseness argument. Using semigroup
theory one could also proceed as follows: Let $(T_t^{(1)})$ denote
the $L^1$-semigroup associated with $Q$ and let $(T^{(\infty)}_t)$
be its adjoint. For $u \in D(L^{(1)})$ the function $T_t^{(1)}u$ is
a solution to the heat equation on $L^1$. Therefore,
\begin{align*}
\int_X L^{(1)}u \,dm  &= -\frac{d}{dt}\int_X T_t^{(1)}u \, dm  \\
&= -\frac{d}{dt}\int_X uT^{(\infty)}_t 1\, dm.\\
\end{align*}
If $Q$ is stochastically complete, then the right hand side of the
above equation vanishes and (ii) follows. If (ii) holds the above
shows that $\int_X uT^{(\infty)}_t 1dm$ is constant for all $u \in
D(L^{(1)})$. This then easily  implies $T^{(\infty)}_t 1 = 1$.
\end{remark}

%%%%%%%%%%%%%%%%%%%%%%%%%%%%%%%%%%%%%%%%%%%%%%%%%%%%%%%%%%

\section{Recurrence}\label{section;recurrence}
In this section we  characterize  recurrence  of Dirichlet forms (see Definition~\ref{def:rec/trans}). The
crucial new ingredient in our considerations will be the operator $L_e$ defined
below.

To each Dirichlet form with domain $D(Q)$ we can associate the
extended Dirichlet space $D(Q)_e$ which consists of all $m$-a.e.\
finite measurable functions $f$ for which a $Q$-Cauchy sequence
$(f_n) \subseteq D(Q)$ exists such that $f_n \to f$ $m$-a.e.\ Such a
sequence is called \textit{approximating sequence} for $f$. We can
then extend $Q$ to a quadratic form on $D(Q)_e$ by setting
$$Q(f) = \lim_{n\to \infty} Q(f_n).$$\label{definition;extended dirichlet space}
For properties of this space and further details, we refer the
reader to \cite[Chapter~1]{CF}. Note that we denote the extended
form by $Q$ as well.

The inclusion
$$j : D(Q)\subseteq L^2 (X,m) \to D(Q)_e, \quad u\mapsto u,$$
gives (via taking the adjoint)  rise to the operator
$$L_e : D(Q)_e\to L^2 (X,m)$$
with domain
$$D(L_e):=\{v\in D(Q)_e\mid \mbox{there exists  $w\in L^2 (X,m)$
s.t. $\langle w, u\rangle = Q (v,u)$ for all $u\in D(Q)$}\}$$
acting by
$$L_e v = w.$$

\begin{remark}
It is not hard to see that $L_e$ is an extension of the operator $L$
associated to $Q$ in the sense that  $D(L)\subseteq D(L_e)$  and
$L_e f = L f$ for $f\in D(L)$.
\end{remark}

The proof of the next result strongly relies on results of \cite{CF}.

\begin{theorem}[Characterization recurrence] \label{thm;recurrence} Let $Q$ be an irreducible Dirichlet form.  Then the following assertions are
equivalent:
\begin{itemize}
\item[(i)] $Q$ is recurrent.
\item[(ii)] For all $u\in D(L_e)$ with $L_e u\in L^1 (X,m)$ the following
equality holds
$$0 = \int_X L_e u \;  dm .$$
\item[(iii) ] The constant function $1$ belongs to the domain of $L_e$ and $L_e
1 =0$ holds.
\end{itemize}
\end{theorem}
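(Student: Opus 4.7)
The plan is to cycle through the three implications, relying on two cornerstones from \cite{CF}: the characterization of recurrence for an irreducible Dirichlet form as the membership $1 \in D(Q)_e$ together with $Q(1)=0$, and the recurrence/transience dichotomy under irreducibility (as recalled in the setup).

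For (i) $\Rightarrow$ (iii) I would invoke the characterization recurrence $\Leftrightarrow$ $1\in D(Q)_e$ and $Q(1)=0$. Then, extending $Q$ as a symmetric bilinear form on $D(Q)_e\times D(Q)_e$ and using Cauchy--Schwarz in the extended form, I get $|Q(1,v)| \leq Q(1)^{1/2}Q(v)^{1/2}=0$ for every $v\in D(Q)$. Hence the defining relation for $L_e$ is satisfied by $1$ with $w=0\in L^2(X,m)$, i.e.\ $1\in D(L_e)$ and $L_e 1=0$.

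For (iii) $\Rightarrow$ (ii) I would exploit the existence (once $1\in D(Q)_e$ with $Q(1)=0$) of an approximating sequence $(g_n)\subseteq D(Q)$ with $0\leq g_n\leq 1$, $g_n\to 1$ $m$-a.e.\ and $Q(g_n)\to 0$; truncation by normal contractions provides the first two properties if needed. For any $u\in D(L_e)$ with $L_e u\in L^1(X,m)$, the defining relation gives
\[
\langle L_e u, g_n\rangle = Q(u,g_n).
\]
Dominated convergence (with dominator $|L_e u|\in L^1$) forces the left-hand side to $\int_X L_e u\,dm$, while Cauchy--Schwarz in the extended form yields $|Q(u,g_n)|\leq Q(u)^{1/2}Q(g_n)^{1/2}\to 0$. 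Thus $\int_X L_e u\,dm=0$.

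The implication (ii) $\Rightarrow$ (i) will be the main obstacle, and I would argue by contradiction using irreducibility. If $Q$ is not recurrent, the dichotomy makes it transient, so the Green operator $G$ maps $L_+^1(X,m)$ into a.e.\ finite functions. Choose, by $\sigma$-finiteness, some $f\in L^1_+(X,m)\cap L^2_+(X,m)$ with $f>0$ $m$-a.e. Standard potential theory for transient forms (\cite[Section~1.5]{CF}) ensures $Gf\in D(Q)_e$ and $Q(Gf,v)=\langle f,v\rangle$ for all $v\in D(Q)$; this precisely says $Gf\in D(L_e)$ with $L_e Gf=f\in L^1(X,m)$. Applying (ii) gives $0=\int_X L_e Gf\,dm=\int_X f\,dm>0$, a contradiction. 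The delicate point is verifying the identification $L_e Gf=f$ on the extended domain rather than on $D(L)$ itself, which is where irreducibility and the cited results from \cite{CF} on potentials in the extended Dirichlet space are essential.
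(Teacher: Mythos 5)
Your implications (i) $\Rightarrow$ (iii) and (iii) $\Rightarrow$ (ii) are sound (for the latter, note that (iii) only gives $Q(1,v)=0$ for $v\in D(Q)$ directly; to get $Q(1)=0$ in the extended form, and hence $Q(g_n)\to 0$ for a truncated approximating sequence, you still need the routine continuity of the extended bilinear form along approximating sequences, $Q(1,1)=\lim_n Q(1,f_n)$, which follows from Cauchy--Schwarz and $Q(1-f_n)\to 0$). This part is close in spirit to the paper, which works directly from \cite[Theorem~2.1.8]{CF}.

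The genuine gap is in (ii) $\Rightarrow$ (i). You invoke ``standard potential theory for transient forms'' to claim that for an \emph{arbitrary} strictly positive $f\in L^1_+(X,m)\cap L^2_+(X,m)$ one has $Gf\in D(Q)_e$ and $Q(Gf,v)=\langle f,v\rangle$ for all $v\in D(Q)$. This is false in general: it requires $f$ to have finite $0$-order energy integral, i.e.\ $\int_X f\, Gf\, dm<\infty$ (equivalently, that $v\mapsto \int_X fv\,dm$ be bounded on the Hilbert space $(D(Q)_e,Q)$); indeed, if $Gf\in D(Q)_e$ with $Q(Gf,v)=\langle f,v\rangle$, then $Q(Gf)=\int f\,Gf\,dm$ must be finite. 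Transience only guarantees $Gf<\infty$ a.e., not finite energy. A concrete obstruction: take a connected transient graph consisting of a transient spine with long finite side branches of lengths $\ell_n\to\infty$ (so the on-diagonal Green kernel at the branch tips is of order $\ell_n$), unit masses at the tips, and $f$ with values $a_n$ at the tips satisfying $\sum a_n<\infty$, $\sum a_n^2<\infty$ but $\sum a_n^2\ell_n=\infty$; then $f\in L^1\cap L^2$, $f>0$ can be arranged, yet $\int f\,Gf\,dm=\infty$, so $Gf\notin D(Q)_e$ and your identity $L_e Gf=f$ has no meaning. The ``delicate point'' you flag at the end is therefore not a verification issue but a wrong choice of $f$. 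The repair is exactly what the paper does: by \cite[Theorem~2.1.5]{CF}, transience yields a strictly positive \emph{bounded} $g\in L^1(X,m)$ (hence $g\in L^2$) with $\int_X|v|\,g\,dm\le Q(v)^{1/2}$ on $D(Q)_e$; then $v\mapsto\int gv\,dm$ is a bounded functional on the Hilbert space $(D(Q)_e,Q)$, Riesz representation produces $u\in D(Q)_e$ with $Q(u,v)=\langle g,v\rangle$ for all $v\in D(Q)$, hence $u\in D(L_e)$ with $L_e u=g$ and $\int_X L_e u\,dm>0$, contradicting (ii). No identification of $u$ with a Green potential of a prescribed $f$ is needed.
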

\begin{proof} (i) $\Longrightarrow$ (ii): By  assumption (i) and   \cite[Part (ii)
of Theorem~2.1.8]{CF}, there exists a sequence $(e_n)$ in $D (Q)$ with
$0\leq e_n \leq 1$ and $e_n \to 1$ $m$-almost everywhere and
$$0 = \lim_{n\to \infty} Q (e_n, u)$$
for all $u\in D(Q)_e$. For $u$ satisfying additionally $u\in D(L_e)$
and $L_eu \in L^1 (X,m)$ we  obtain by Lebesgue's theorem
\begin{eqnarray*}
0 = \lim_{n\to \infty} Q (e_n, u)
= \lim_{n\to \infty} \langle e_n, L_e u\rangle
= \lim_{n\to \infty} \int_X e_n L_e u\;dm
= \int_X L_e u \: dm.
\end{eqnarray*}
This finishes the proof of this implication.

(ii) $\Longrightarrow$ (i): Because of the irreducibility of $Q$, it suffices to
show that transience implies the existence of a  $u\in D(L_e)$ with
$L_e u \in L^1 (X,m)$ and
$$0 \neq \int_X L_e u \; dm. $$
By transience of $Q$ the space $D(Q)_e$ with the inner product $Q$ is a Hilbert
space  and  there exists a strictly positive $g\in L^1 (X,m)$ with
\begin{align*}\tag{$*$}
\int_X |v| g dm \leq Q (v)^{1/2}
\end{align*}
for all $v\in D(Q)_e$ (see  \cite[Theorem 2.1.5]{CF}).  Without loss of
generality we can assume $g\in L^2 (X,m)$ as well.
The functional
$$F_g : D(Q)_e \to \RR,\quad  v\mapsto \int_X g v\, dm,$$
is continuous by $(*)$.  Thus, by Riesz representation theorem there exists
$u\in D(Q)_e$ with
$$ \langle g,v\rangle =  \int g v\, dm = F_g (v) = Q (u,v)$$
for all $v\in D(Q)$. By definition of $L_e$ this implies
$$ L_e u = g.$$
As $g$ is strictly positive, we obtain $\int_X L_e u\:  dm >0$. This is the
desired statement.

(i) $\Longleftrightarrow $ (iii): By \cite[Theorem~2.1.8]{CF}
recurrence is equivalent to the function  $1$ belonging to $D(Q)_e$.
In this case, one has $Q (1,u) = 0$ for all $u\in D(Q)$
(see \cite{CF} as well). This gives the desired equivalence.
\end{proof}

\begin{remark}
\begin{itemize}
\item
 The irreducibility of $Q$ is needed in the previous theorem to ensure the
dichotomy of recurrence and transience in our context (see
\cite[Lemma 1.6.4]{FOT}).
\item To put condition (iii) in perspective we define
$$(L_e)_1 : \{u\in D (L_e)\, | \, L_e u \in L^1 (X,m)\}\to
L^1 (X,m), u\mapsto L_e u.$$
Then, (ii) is equivalent to $1\in D ((L_e)_1^\ast)$ and $(L_e)_1^\ast
1 =0$. In this sense, (iii) can be understood as some  form of ``symmetry'' of
$(L_e)_1$.
\end{itemize}
\end{remark}

%%%%%%%%%%%%%%%%%%%%%%%%%%%%%%%%%%%%%%%%%%%%%%%%%%%%%%%%%%%%%%%%%%%%%%%%%%%%%%%
%%%%%%%%%%%%%%%%%%%%%%%%%%%%%%%%%%%%%%%%%%%%%%%%%%%%%%%%%%%%%%%%%%%%%%%%%%%%%%%%
%

\section{Application to regular Dirichlet forms}\label{section;Regular}
In this section we apply the theory  developed so far to a regular
Dirichlet form. Using the Beurling-Deny decomposition each such form
can be extended to the so called reflected Dirichlet space.  Using
this  space we can then provide a  unified treatment of all the
operators and spaces which were used above. In fact, we will show
that all the operators above are just  restrictions of a single
operator to suitable domains.

Let $X$ be a locally compact separable metric space and $m$ a Radon measure of full support. We consider a regular Dirichlet form $(Q,D(Q))$ on $L^2(X,m)$, where regular means that $C_c(X) \cap D(Q)$ is dense in $D(Q)$ with respect to $\aV{\cdot}_Q$ and in $C_c(X)$ with respect to $\aV{\cdot}_\infty$.

A function $f:X\to\RR$ is said to be \emph{quasi  continuous} if for every $\eps>0$ there is an open set $U\subseteq X$ with capacity
less than $\eps$, i.e.,
\begin{align*}
    \mathrm{cap}(U):=\inf\{\|v\|_{Q}\mid v\in D(Q),\, 1_{U}\leq v\}\leq \eps,
\end{align*}
such that $f\vert _{X\setminus U}$ is continuous (where
$\inf\emptyset=\infty$ and $1_{U}$ is the characteristic function of $U$). For a regular Dirichlet form $Q$ every $u\in D(Q)$ admits a
quasi continuous representative, see \cite[Theorem~2.1.3]{FOT}.
Moreover, we say a function satisfies a property \emph{quasi everywhere}, q.e., if the property holds outside of a set $N\subseteq X$ of
capacity zero, i.e., $\mathrm{cap}(N)=0$, where
${\rm cap}(A)=\inf\{{\rm cap}(U)\mid A\subseteq U \mbox{ open}\}$ for
$A\subseteq X$.

We can  express the regular Dirichlet form $Q$ using the
\emph{Beurling-Deny formula}  \cite[Theorem~3.2.1]{FOT}.
That is, for any function $u$ belonging to $D(Q)$ the equation
\begin{equation*} \label{beurling deny}
Q(u,u)=\int_X d\muc (u) +
\iint_{X \times X \setminus {\it diag}} (\tilde u (x)
- \tilde u (y))^2\, J(dx,dy)
     +\int_X \tilde u^2 dk
\end{equation*}
holds. Here $\muc (u)$ is the \textit{strongly local measure},
$J(dx,dy)$ is the \textit{jump measure}, ${\it diag}$ is the
diagonal set of $X \times X$, and $k$ is the \textit{killing
measure} associated with $Q$ (see, e.g., \cite[Chapter~3]{FOT} for
construction and properties of these objects). Furthermore, $\tilde
u$ denotes a quasi-continuous representative of $u$. We denote by
$$D(Q)_{\rm loc} = \{u \in L^2_{\rm loc} \,|\, \forall G \subseteq X \text{
open, relatively compact } \exists v \in D(Q) \text { with } u = v \text{ on }
G \}$$
the space of functions locally belonging to the domain $D(Q)$. Note that also each $u
\in D(Q)_{\mathrm{loc}}$  admits a quasi continuous representative $\tilde{u}$ (see,
e.g., \cite[Proposition~3.1]{FLW}).

The Beurling-Deny representation allows one to extend the diagonal
of $Q$ to larger classes of functions. Since any function in
$D(Q)_{\mathrm{loc}}$ has a quasi continuous representative and $J$
and $k$ charge no set of capacity zero, we can extend the second and
third summand of the Beurling-Deny formula to $D(Q)_{\mathrm{loc}}$
in an obvious way. Moreover, for $u \in D(Q)_{\mathrm{loc}}$ we
introduce the Radon measure $d\mu^{(c)}(u)$ via the identity
$$\int_X \varphi \,d\mu^{(c)}(u) = \int_X \varphi \,d\mu^{(c)}(u_\varphi), $$
where $\varphi \in C_c(X)$ and $u_\varphi \in D(Q)$ are  such that
$u = u_\varphi$ on a neighborhood $\supp \,\varphi$. The local
property of $\mu ^{(c)}$ assures that this is well defined. Thus,
the diagonal of the form $Q$ can be extended to
$D(Q)_{\mathrm{loc}}$. We will denote this extension by the same
symbol $Q$ (note that it may take the value $\infty).$

Let us mention some properties of this extension.

\begin{prop} \label{some properties}  Let $u \in D(Q)_{\rm loc}$ be given.
\begin{itemize}
\item[(a)] For any normal contraction $C$ the function $C\circ u$ belongs to $
D(Q)_{\rm loc}$ and $Q(C\circ u) \leq Q(u)$ holds.
\item[(b)] The sequence $(Q((u\wedge n)\vee (-n)))_n$ is monotone
increasing with  $Q(u) = \lim_{n\to \infty} Q((u\wedge n)\vee
(-n))$.
\item[(c)] If $u$ belongs to $L^\infty(X,m)$ and has compact support, then $u \in D(Q)$.
\item[(d)] If $v \in D(Q)_{\rm loc}$ the following inequalities hold
$$Q(u \wedge v)^{1/2} \leq Q(u)^{1/2} + Q(v)^{1/2}\quad \text{ and } \quad Q(u \vee v)^{1/2} \leq Q(u)^{1/2} + Q(v)^{1/2}.$$
\end{itemize}
\end{prop}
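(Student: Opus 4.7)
My strategy is to reduce each of the four statements to the Beurling--Deny decomposition and treat the jump, killing, and strongly local pieces separately. The jump and killing pieces are defined pointwise from quasi-continuous representatives and so they respond transparently to pointwise operations; the subtle point throughout will be the strongly local measure $\muc$, whose localized definition forces one to pass through test functions $\varphi\in C_c(X)$ and locally chosen $u_\varphi\in D(Q)$.

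For (a), I would first verify $C\circ u\in D(Q)_{\mathrm{loc}}$ by picking, for each relatively compact open $G\subseteq X$, a function $v\in D(Q)$ with $u=v$ on $G$; then $C\circ v\in D(Q)$ by the original contraction property of $Q$, and $C\circ u=C\circ v$ on $G$. For the inequality $Q(C\circ u)\leq Q(u)$ I would bound the three Beurling--Deny terms in turn. The jump integrand satisfies $(\widetilde{C\circ u}(x)-\widetilde{C\circ u}(y))^{2}\leq(\tilde u(x)-\tilde u(y))^{2}$ by the $1$-Lipschitz property of $C$, and the killing integrand satisfies $(\widetilde{C\circ u})^{2}\leq\tilde u^{2}$ since $C(0)=0$. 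For the strongly local part I would fix $\varphi\in C_c(X)$ with $\varphi\geq 0$, write $\int\varphi\,d\muc(C\circ u)=\int\varphi\,d\muc(C\circ u_\varphi)$ using a local representative $u_\varphi\in D(Q)$, and apply the standard measure inequality $d\muc(C\circ w)\leq d\muc(w)$ for $w\in D(Q)$. Part (b) then follows from (a) applied to the normal contractions $C_n(t)=(t\wedge n)\vee(-n)$: the jump and killing integrands increase pointwise to their $u$-counterparts and the strongly local measures $\muc(u_n)$ are monotone by the same local argument, so monotone convergence in each Beurling--Deny term produces the desired identity.

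For (c), regularity of $Q$ supplies a cutoff $\varphi\in C_c(X)\cap D(Q)$ with $\varphi\equiv 1$ on $\supp u$. Taking a relatively compact open $G\supseteq\supp\varphi$ and $v\in D(Q)$ with $u=v$ on $G$, I would use (a) to truncate $v$ to a bounded function in $D(Q)$ and invoke the algebra property of $D(Q)\cap L^\infty$ to conclude $\varphi v\in D(Q)$; the choice of $\varphi$ forces $\varphi v=u$ everywhere. For (d), I would rely on the identities $u\wedge v=\tfrac12(u+v-|u-v|)$ and $u\vee v=\tfrac12(u+v+|u-v|)$: since each Beurling--Deny piece is a quadratic form, $Q^{1/2}$ is a seminorm on $D(Q)_{\mathrm{loc}}$, and combining the triangle inequality with part (a) applied to $C(t)=|t|$ yields both estimates.

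The main obstacle is the contraction inequality for the strongly local measure $\muc$ in the $D(Q)_{\mathrm{loc}}$-setting: although it is well known for elements of $D(Q)$, one has to pass through the test-function definition of $\muc(u)$ for $u\in D(Q)_{\mathrm{loc}}$ and verify that the local representative $u_\varphi$ can be chosen consistently so that the inequality survives localization and pull-back. Once that is handled, the remaining steps are routine Beurling--Deny bookkeeping.
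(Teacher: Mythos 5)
Your reduction of (a) and (d) to the three Beurling--Deny pieces is essentially the paper's (largely unwritten) argument: in (d) your lattice identity, the subadditivity of $Q^{1/2}$ on $D(Q)_{\rm loc}$ and part (a) applied to $C(t)=|t|$ reproduce exactly the displayed computation in the paper (whose formula for $u\wedge v$ even carries a sign slip that you have implicitly corrected). The measure-level contraction inequality $d\mu^{(c)}(C\circ w)\le d\mu^{(c)}(w)$ for $w\in D(Q)$, which you rightly single out as the crux, is a standard property of the strongly local energy measure (chain rule for smooth contractions plus approximation, cf.\ \cite[Section~3.2]{FOT}), and transporting it to $D(Q)_{\rm loc}$ through the representatives $u_\varphi$ works precisely as you describe, since $C\circ u_\varphi$ is a local representative of $C\circ u$. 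For (c) you genuinely deviate from the paper: instead of quoting \cite[Theorem~3.5]{FLW} you argue directly with a cut-off $\varphi\in C_c(X)\cap D(Q)$, a truncated local representative $v^{(M)}$ and the algebra property of $D(Q)\cap L^\infty(X,m)$; this is correct (since $\supp u\subseteq\{\varphi=1\}$ and $\supp\varphi\subseteq G$, one has $\varphi v^{(M)}=u$ both on $G$ and off $G$) and more self-contained -- it is in fact the same device the paper itself uses later in the proof of Proposition~\ref{MP regular}.

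The one step that does not go through as written is the strongly local term in (b). Monotonicity of the measures $\mu^{(c)}(u^{(n)})$, which your contraction argument does give, only yields that $\lim_n\mu^{(c)}(u^{(n)})(X)$ exists and is at most $\mu^{(c)}(u)(X)$; the reverse inequality is not an instance of the monotone convergence theorem, because, unlike the jump and killing terms, the local term is not the integral of a pointwise increasing sequence of integrands against a fixed measure. To identify the limit you need an additional (standard) input: either the truncation property of the strongly local energy measure, namely that $d\mu^{(c)}(w^{(n)})$ coincides with $1_{\{|\widetilde w|<n\}}\,d\mu^{(c)}(w)$ by strong locality, together with the fact that $\mu^{(c)}(u)$ does not charge $\{|\widetilde u|=\infty\}$; or the convergence $Q(w-w^{(n)})\to 0$ for $w\in D(Q)$ (weak convergence plus convergence of the form norms), applied to the local representatives $u_\varphi$ and combined with the energy-measure triangle inequality, an exhaustion by compact sets and an interchange of suprema in the spirit of Lemma~\ref{sequences}. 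With either ingredient your scheme for (b) closes; without it, the equality $Q(u)=\lim_n Q(u^{(n)})$ is justified only for the jump and killing parts.
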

\begin{proof}
Assertions (a) and (b) follow immediately from the definition of a regular Dirichlet form and its Beurling-Deny decomposition. Let us turn to statement statement (c). By the discussion in \cite[Section 3.1, page 4772]{FLW}, the space $L^\infty(X,m) \cap D(Q)_{\rm loc}$ is included in the space refered to as $\mathcal{D}_{loc}^*$ in \cite{FLW}. By \cite[Theorem 3.5]{FLW} the compactly supported functions in $\mathcal{D}_{loc}^*$ belong to $D(Q)$. The proof of the first inequality of (d) uses $u \wedge v = \frac{1}{2} (u+v + |u-v|)$, the triangle inequality for $Q$ and the contraction property (a). More precisely, we estimate
$$Q(u\wedge v)^{1/2} = \frac{1}{2}Q(u + v + |u-v|)^{1/2} \leq \frac{1}{2}(Q(u + v)^{1/2} + Q(|u-v|)^{1/2}) \leq Q(u)^{1/2} + Q(v)^{1/2}.$$
The other inequality can be treated similarly. This finishes the proof.
\end{proof}

%%%%%%%%%%%%%%%%%%%%%%%%%%%%%%%%%%%%%%%%%%%%%%%%%%%%%%%%%%%%%%%%%%%%%%%%

\subsection{Functions of finite energy}
\label{subsection;FFE} In this section we introduce another space of
importance. When  equipped with the extension of the underlying
Dirichlet form this space is referred to as the \textit{reflected
Dirichlet space}.

Recall that $L^0(X,m)$ denotes the space of $m$-a.e.\ defined functions. For
$n \geq 1$ and $u \in L^0(X,m)$, we write $u^{(n)} = (u\wedge n)\vee
(-n)$. Similarly, for nonnegative $f\in L^0(X,m)$, we let $u^f =
(u\wedge f)\vee (-f)$. We extend $Q$ to
$$D(Q)_{\rm loc}^\infty := \{u \in L^0(X,m)\, | \, u^{(n)} \in D(Q)_{\rm loc}
\text{ for all } n\geq 1\},$$
by setting
$$\ow{Q}(u) := \lim_{n \to \infty}Q(u^{(n)}).$$
Here,  the preceding  limit exists  as $(Q(u^{(n)}))_n$ is monotone.
Indeed, this monotonicity can be directly inferred from
 Proposition~\ref{some properties}~(b) as $(u^{(n)})^{(k)} = u^{(k)}$ for all $k\leq n$.
 Whenever $u  \in L^0(X,m) \setminus D(Q)_{\rm loc}^\infty$,
we let $\ow{Q}(u)  = \infty$.

\begin{definition}[Functions of finite energy]  We say
$$\widetilde{D}(Q) := \{u \in D(Q)_{\rm loc}^\infty\, | \, \ow{Q}(u) < \infty \}$$
is the space of \emph{functions of finite energy} associated with
$Q$. The pair $(\ow{Q},\widetilde{D}(Q))$ is called its \emph{reflected
Dirichlet space}.
\end{definition}
\begin{remark}
It is immediate from the definitions that
$$\ow{D}(Q) \cap L^\infty(X,m) = \{u\in D(Q)_{\rm loc} \cap L^\infty(X,m)\, \mid \, Q(u) < \infty\}$$
and that  $\ow{Q}$ and $Q$ agree on this space. In fact, by Proposition~\ref{some properties}~(b) the inclusion
$$\{u\in D(Q)_{\rm loc}\mid  Q(u) < \infty \} \subseteq \ow{D}(Q) $$
holds and if $u \in  D(Q)_{\rm loc}$ with $Q(u) < \infty$ the equality $\ow{Q}(u) = Q(u)$ is satisfied. Note however, that the inclusion $D(Q)_{\rm loc} \subseteq L^2_{\rm loc}(X,m)$ is satisfied by the definition of $D(Q)_{\rm loc}$ while the same need not be true for $\ow{D}(Q)$.
\end{remark}

We will now prove two structural theorems about the space $(\ow{Q},\ow{D}(Q))$. Namely, we show that $\ow{Q}$ has the Fatou property on $L^0(X,m)$ and that it is a quadratic form satisfying the Markov property.

\begin{theorem}[Fatou's Lemma for $\ow{Q}$ on $L^0$] \label{Fatou} Let $(u_n)$ be a sequence in
$L^0(X,m)$ and $u \in L^0(X,m)$ such that $u_n \to u$ $m$-almost everywhere. Then,
$$\ow{Q}(u) \leq \liminf_{n\to \infty}\ow{Q}(u_n).$$
In particular, $\liminf_{n\to \infty}\ow{Q}(u_n) < \infty$ implies $u \in
\ow{D}(Q)$.
\end{theorem}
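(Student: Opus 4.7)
The plan is to reduce the claim to the bounded truncations $u^{(N)}$ via the monotonicity built into the definition of $\ow{Q}$, then to localize by multiplication with a cutoff in order to land inside $D(Q)$, and finally to extract a lower-semicontinuity inequality by weak compactness in the Hilbert space $(D(Q),\langle\cdot,\cdot\rangle_{Q})$ together with the Beurling-Deny decomposition.

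Without loss of generality $L:=\liminf_{n\to\infty}\ow{Q}(u_n)<\infty$, and after passing to a subsequence $\ow{Q}(u_n)\to L$. Because $\ow{Q}(v)=\lim_{N\to\infty}Q(v^{(N)})$ is a monotone limit and $Q(v^{(N)})\leq\ow{Q}(v)$ (both inherited from Proposition~\ref{some properties}(b) using $(v^{(N)})^{(k)}=v^{(k)}$ for $k\le N$), it suffices to show, for each fixed $N\geq 1$, that $u^{(N)}\in D(Q)_{\rm loc}$ and
$$Q(u^{(N)})\leq \liminf_{n\to\infty} Q(u_n^{(N)}).$$
Sending $N\to\infty$ on both sides then yields the theorem.

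Fix $N$ and a relatively compact open $G\subseteq X$. By regularity of $Q$, pick $\chi\in C_c(X)\cap D(Q)$ with $0\leq\chi\leq 1$ and $\chi\equiv 1$ on $G$. Each $u_n^{(N)}\chi$ is bounded, compactly supported, and belongs to $D(Q)_{\rm loc}$, hence to $D(Q)$ by Proposition~\ref{some properties}(c). The standard Leibniz-type inequality
$$Q(u_n^{(N)}\chi)^{1/2}\leq Q(u_n^{(N)})^{1/2}+N\, Q(\chi)^{1/2},$$
derived term by term from the Beurling-Deny formula, shows that $(u_n^{(N)}\chi)_n$ is bounded in $(D(Q),\langle\cdot,\cdot\rangle_Q)$. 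Dominated convergence gives $u_n^{(N)}\chi\to u^{(N)}\chi$ in $L^2(X,m)$, and weak compactness together with uniqueness of limits identifies $u^{(N)}\chi$ as the weak limit of a subsequence in $D(Q)$; in particular $u^{(N)}\chi\in D(Q)$ and
$$Q(u^{(N)}\chi)\leq \liminf_{n\to\infty} Q(u_n^{(N)}\chi).$$
As $G$ was arbitrary, this already forces $u^{(N)}\in D(Q)_{\rm loc}$.

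The main obstacle is to eliminate the spurious $N\,Q(\chi)^{1/2}$ term and reach the clean global inequality. The plan is to invoke the Beurling-Deny decomposition and treat the three summands separately: by locality of $\mu^{(c)}$ and since $J$ and $k$ charge no set of capacity zero, the strongly local, jump, and killing contributions to $u^{(N)}$ and to $u^{(N)}\chi$ coincide on $G$ and on $G\times G$, respectively, so each contribution restricted there is bounded by $Q(u^{(N)}\chi)$. Taking an exhaustion $(G_\ell,\chi_\ell)$ with $\chi_\ell\nearrow 1$ q.e., combining monotone convergence for the local and killing parts with Fatou for the jump part (on a subsequence where the $m$-a.e.\ convergence is upgraded to q.e.\ convergence of the quasi continuous representatives) produces $Q(u^{(N)})\leq \liminf_n Q(u_n^{(N)})$. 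Letting $N\to\infty$ and using the monotonicity of both sides in $N$ completes the proof.
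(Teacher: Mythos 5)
Your reduction to the truncations $u^{(N)}$ and your localization step (multiplying by a cutoff, the Leibniz-type bound, weak compactness in $(D(Q),\langle\cdot,\cdot\rangle_Q)$ and identification of the weak limit, giving $u^{(N)}\in D(Q)_{\rm loc}$) are sound and run parallel to Step~1 of the paper's proof, which uses the cut-off $(u_n\wedge e)\vee(-e)$ and Proposition~\ref{some properties}~(d) instead of products. The genuine gap is in your final paragraph, which is exactly where the real work lies and which remains a plan rather than an argument. First, the claimed upgrade of the $m$-a.e.\ convergence of $u_n^{(N)}$ to q.e.\ convergence of quasi continuous representatives along a subsequence is unjustified: such an upgrade (via \cite[Theorem~2.1.4]{FOT}) requires convergence in the form norm $\|\cdot\|_Q$, whereas your construction only yields weak convergence of $u_n^{(N)}\chi$ in $D(Q)$; mere a.e.\ convergence with bounded energies does not suffice. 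This is precisely why the paper passes to Ces\`aro means via the Banach--Saks theorem: the means converge strongly in $\|\cdot\|_Q$, a subsequence of their quasi continuous versions converges q.e., and the resulting bound is transferred back to $\liminf_n Q(u_n)$ using the triangle inequality for $Q^{1/2}$. Second, even granting q.e.\ convergence, a pointwise Fatou argument covers only the jump and killing parts; the strongly local measure $\mu^{(c)}$ has no pointwise representation, so ``monotone convergence'' in the exhaustion parameter gives no comparison of $\int d\mu^{(c)}(u^{(N)})$ with $\liminf_n\int d\mu^{(c)}(u_n^{(N)})$, and the elimination of the spurious term $N\,Q(\chi_\ell)^{1/2}$ (which in general is unbounded as $\chi_\ell\nearrow 1$) is never actually performed. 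In the paper this part is handled by the strong convergence of the Ces\`aro means combined with the locality of $\mu^{(c)}$ and Lemma~\ref{sequences}.

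A repair close to your set-up does exist: for fixed $\ell$ the restricted form $v\mapsto \int_{G_\ell}d\mu^{(c)}(v)+\iint_{G_\ell\times G_\ell\setminus {\it diag}}(\tilde v(x)-\tilde v(y))^2\,J(dx,dy)+\int_{G_\ell}\tilde v^2\,dk$ is a nonnegative quadratic form on $D(Q)$ dominated by $Q$, hence weakly lower semicontinuous on $(D(Q),\|\cdot\|_Q)$; applying this to $u_n^{(N)}\chi_\ell\rightharpoonup u^{(N)}\chi_\ell$, using locality and the fact that $J$ and $k$ charge no sets of capacity zero to replace $u^{(N)}\chi_\ell$ and $u_n^{(N)}\chi_\ell$ by $u^{(N)}$ and $u_n^{(N)}$ inside $G_\ell$, bounding the restricted form of $u_n^{(N)}$ by $Q(u_n^{(N)})$, and letting $\ell\to\infty$ by monotone convergence yields $Q(u^{(N)})\le\liminf_n Q(u_n^{(N)})$ without any cutoff term. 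But this mechanism (or the paper's Banach--Saks device) is absent from your write-up, so as it stands the decisive step is missing.
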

\begin{proof}
It suffices to consider the case $\liminf_{n\to \infty}\ow{Q}(u_n) < \infty$. So, assume $u_n \in \ow{D}(Q)$ for all $n$ and $\liminf_{n\to \infty}\ow{Q}(u_n)
= \lim_{n \to \infty}\ow{Q}(u_n)$.

We prove the statement in two steps. First we show the statement for
bounded functions and conclude the general statement afterwards.

 Step 1: Assume $u\in L^\infty(X,m)$.  Without loss of generality we assume $-1 \leq u \leq 1$. Using
Proposition \ref{some properties}~(a) we may cut-off the $u_n$ and assume  $-1 \leq u_n\leq 1$ as well.

We show $u\in D(Q)_{\rm loc}$:   Let $G$ be open and relatively
compact. By regularity of $Q$ we choose a function $e \in D(Q)\cap
C_c(X)$ such that $e\equiv 1$ on $G$. Set $u_n^e = (u_n \wedge
e)\vee (-e)$. Since  $u_n \in \ow{D}(Q) \cap L^\infty(X,m) \subseteq D(Q)_{\rm loc} \cap L^\infty(X,m)$ and $e$ has compact support, we obtain $u_n^e \in
D(Q)$ by Proposition~\ref{some properties}~(c). Using Proposition~\ref{some properties}~(d) we estimate
$$ Q(u_n^e)^{1/2} \leq Q(u_n\wedge e)^{1/2} + Q(e)^{1/2} \leq Q(u_n)^{1/2} + 2 Q(e)^{1/2}.$$
Therefore, $(u_n^e)_n$ is a bounded sequence in the Hilbert space
$(D(Q),\aV{\cdot}_Q)$. The Banach-Saks Theorem yields the existence
of a subsequence $(u_{n_k}^e)$ and a $v \in D(Q)$ such that $v_N =
\frac{1}{N}\sum_{k=1}^N u_{n_k}^ e$ is $\aV{\cdot}_Q$ convergent to
$v$. From  pointwise convergence of the $u_n$ and, since $e$ has
compact support, we infer $v_N \to (u \wedge e)\vee (-e)$ in
$L^2(X,m)$. Therefore, $(u \wedge e)\vee (-e) = v \in D(Q)$. Since
$(u \wedge e)\vee (-e) =  u $ on $G$, this shows $u \in D(Q)_{\rm
loc}$.

Let us turn to proving the  inequality: Since $X$ is locally compact
and separable, we find an increasing sequence  of relatively compact
open sets $G_l$ such that $\overline{G}_l \subseteq G_{l+1}$. By
regularity we can choose functions $e_l\in D(Q)\cap C_c(X)$, such
that $e_l = 1$ on $G_{l+1}$. Using the above and a diagonal sequence
argument, we may find a subsequence $(u_{n_k})$ such that for all
$l$ the sequence $v_N^l = \frac{1}{N}\sum_{k=1}^N u_{n_k}^{e_l}$
satisfies
$$\|v_N^l - u^{e_l}\|_Q \to 0, \text{ as }N\to \infty.$$
By \cite[Theorem 2.1.4]{FOT}, we infer that each sequence
$(\tilde{v}^l_N)_N$ has a q.e. convergent subsequence which
converges to $u^{e_l}$. By a diagonal sequence argument,
 we may assume that $\tilde{v}_N = \frac{1}{N}\sum_{k=1}^N
\tilde{u}_{n_k}$ is q.e. convergent towards $u$ (otherwise take a
subsequence). Since $J$ and $k$ charge no set of capacity zero,
Fatou's Lemma yields
\begin{align*}
\iint_{X \times X \setminus  {\it diag}} (\tilde u (x) - \tilde u
(y))^2&\, J(dx,dy)
     +\int_X  \tilde{u}^2 dk \\
     \leq & \liminf_{N\to \infty} \iint_{X \times X \setminus  {\it diag}}
(\tilde v_N (x) - \tilde v_N (y))^2\, J(dx,dy)
     +\int_X \tilde{v}_N^2 dk.
     \end{align*}
For the strongly local part, we obtain
\begin{align*}
\int_X d\muc(u) &= \lim_{l\to \infty} \int_{G_l} d\muc(u)\\
&= \lim_{l\to \infty} \int_{G_l} d\muc(u^{e_{l}})\\
&= \lim_{l\to \infty}  \lim_{N\to \infty} \int_{G_l} d\muc(v_N^l)\\
(\muc \text{ is local})\;\:\; &= \lim_{l\to \infty}  \lim_{N\to
\infty} \int_{G_l}
d\muc(v_N)\\
&\leq \liminf_{N\to \infty}  \int_{X} d\muc(v_N).
\end{align*}
The last inequality holds since for each $N$ the convergence in $l$ is
monotone (see Lemma~\ref{sequences}). Altogether  we obtain
\begin{align*}
Q(u)^{1/2} \leq \liminf_{N\to \infty} Q^{1/2}(v_N)
 \leq \liminf_{N\to \infty} \frac{1}{N}\sum_{k=1}^N Q(u_{n_k})^{1/2}
= \lim_{n\to \infty} Q(u_n)^{1/2},
\end{align*}
where the last step results from the assumption in the beginning of
the proof.

Step 2: For arbitrary $u\in L^{0}(X,m)$ as in the statement of the
theorem, the considerations of Step 1 applied to $u^{(k)}$ and the
sequence $(u_n^{(k)})_n$ show $u^{(k)} \in D(Q)_{\rm loc}$ for any
$k> 1$. Therefore, recalling the definition of $Q(u)$, we compute
\begin{align*}
\ow{Q}(u) &= \lim_{k\to\infty} Q(u^{(k)})\\
\text{(Step 1)} &\leq \liminf_{k\to\infty} \liminf_{n\to \infty} Q(u^{(k)}_n)\\
&\leq \liminf_{n\to\infty} \liminf_{k\to \infty} Q(u^{(k)}_n)\\
&\leq \liminf_{n\to\infty}  \ow{Q}(u_n).
\end{align*}
Here, we used in the third step that for each $n$ the convergence in
$k$ is monotone (Proposition~\ref{some properties}~(b)) and Lemma
\ref{sequences}. This finishes the proof.
\end{proof}

Recall that a functional $q$ on some real linear space $F$ is called a \emph{quadratic form}, if 
$$q(f+g) + q(f-g) = 2q(f) + 2q(g)\text{ and } q(af) = a^2q(f)$$
for any $f,g \in F$ and $a \in \RR$. Any quadratic form $q$ induces a bilinear form via polarization which we also denote by $q$. The following theorem shows that we can apply this concept to $F = \ow{D}(Q)$ and $q = \ow{Q}$. 

\begin{theorem} \label{thm:Quadratic form and cut-off}
The map $\ow{Q}:\ow{D}(Q) \to [0,\infty)$ is a quadratic form. Furthermore, for any normal contraction $C: \RR \to\RR$ and any $u \in \ow{D}(Q)$ we have $C\circ u \in \ow{D}(Q)$ and
$$\ow{Q}(C\circ u) \leq \ow{Q}(u).$$
\end{theorem}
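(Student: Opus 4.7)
The plan is to pull both the quadratic form property and the Markov property up from $Q$ on $D(Q)_{\mathrm{loc}}$ to $\widetilde{Q}$ on $\widetilde{D}(Q)$ by truncation combined with Fatou (Theorem~\ref{Fatou}). The crucial input is that $Q$, restricted to the subset of $D(Q)_{\mathrm{loc}}$ on which it is finite, is itself a quadratic form satisfying the contraction property: the bilinearity is inherited term-by-term from the Beurling--Deny decomposition (each of its three summands is manifestly quadratic in $u$), while the contraction estimate is Proposition~\ref{some properties}(a). Together with the remark that $\widetilde{Q}$ coincides with $Q$ on $\widetilde{D}(Q)\cap L^\infty(X,m)$, this gives everything needed on bounded functions, and the task is to pass from there to the whole space.

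For the Markov property, let $u \in \widetilde{D}(Q)$ and $C$ a normal contraction. Since $u^{(n)}\in D(Q)_{\mathrm{loc}}$, Proposition~\ref{some properties}(a) produces $C\circ u^{(n)} \in D(Q)_{\mathrm{loc}}$ with $Q(C\circ u^{(n)}) \le Q(u^{(n)}) \le \widetilde{Q}(u)<\infty$; the bound $|C\circ u^{(n)}|\le n$ then places $C\circ u^{(n)}$ in $\widetilde{D}(Q)\cap L^\infty$ with $\widetilde{Q}(C\circ u^{(n)})=Q(C\circ u^{(n)})$. Continuity of $C$ together with $u^{(n)}\to u$ almost everywhere gives $C\circ u^{(n)}\to C\circ u$ almost everywhere, and Theorem~\ref{Fatou} delivers $\widetilde{Q}(C\circ u) \le \liminf_n\widetilde{Q}(C\circ u^{(n)})\le \widetilde{Q}(u)$; in particular $C\circ u\in\widetilde{D}(Q)$.

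For the quadratic form property, homogeneity $\widetilde{Q}(au)=a^2\widetilde{Q}(u)$ is routine from the identity $(au)^{(n)}=a\,u^{(n/a)}$ for $a>0$ (with a sign flip for $a<0$) together with the monotone definition of $\widetilde{Q}$. The parallelogram identity is the main content. For $u,v\in\widetilde{D}(Q)$, the sums $u^{(n)}\pm v^{(n)}$ lie in the vector space $D(Q)_{\mathrm{loc}}$, and the parallelogram identity for $Q$ on $D(Q)_{\mathrm{loc}}$ yields
\[
Q(u^{(n)}+v^{(n)}) + Q(u^{(n)}-v^{(n)}) = 2Q(u^{(n)}) + 2Q(v^{(n)}) \le 2\widetilde{Q}(u) + 2\widetilde{Q}(v) < \infty.
\]
Hence $u^{(n)}\pm v^{(n)}\in\widetilde{D}(Q)\cap L^\infty$ with $\widetilde{Q}=Q$ there. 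Since $u^{(n)}\pm v^{(n)}\to u\pm v$ almost everywhere, applying Theorem~\ref{Fatou} to each sequence and summing the two inequalities yields, via $\liminf_n a_n+\liminf_n b_n\le \liminf_n(a_n+b_n)$ and the monotonicity of $(Q(u^{(n)}))_n$, $(Q(v^{(n)}))_n$ from Proposition~\ref{some properties}(b),
\[
\widetilde{Q}(u+v) + \widetilde{Q}(u-v) \le 2\widetilde{Q}(u) + 2\widetilde{Q}(v).
\]
This simultaneously shows $u\pm v\in\widetilde{D}(Q)$, so $\widetilde{D}(Q)$ is a vector space. The reverse inequality is the standard polarization trick: applying the inequality just proved to the pair $(u+v,\,u-v)$ and using homogeneity gives $4\widetilde{Q}(u)+4\widetilde{Q}(v)\le 2\widetilde{Q}(u+v)+2\widetilde{Q}(u-v)$, and equality follows.

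The main obstacle, and the reason Theorem~\ref{Fatou} is essential rather than a direct monotone-limit argument, is that $(u+v)^{(n)}$ does not coincide with $u^{(n)}+v^{(n)}$; one cannot express the truncations of a sum in terms of the truncations of its summands and so cannot simply pass to limits inside the Beurling--Deny decomposition. Fatou lets us instead approximate $u+v$ by the convenient sequence $u^{(n)}+v^{(n)}$, whose energies are directly controlled by those of the individual truncations via the parallelogram identity on $D(Q)_{\mathrm{loc}}$.
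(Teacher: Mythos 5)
Your proposal is correct and follows essentially the same route as the paper: the contraction property and the parallelogram inequality are obtained by applying Fatou's Lemma (Theorem~\ref{Fatou}) to the truncations $u^{(n)}$, resp.\ $u^{(n)}\pm v^{(n)}$, using that $Q$ is a quadratic form with the contraction property where it is finite on $D(Q)_{\rm loc}$, and the reverse parallelogram inequality comes from the same polarization trick applied to $(u+v,u-v)$ together with homogeneity. The only (harmless) deviation is your proof of homogeneity via the identity $(au)^{(n)}=a\,u^{(n/|a|)}$ and monotonicity in the truncation level, where the paper instead uses Fatou and the normal contraction $x\mapsto \tfrac{1}{a}(ax)^{(n)}$.
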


\begin{proof}
We first show the contraction property. Let $C:\RR \to \RR$ be a normal contraction. Now, Fatou's Lemma for $\ow{Q}$ and Proposition~\ref{some properties}~(a) yields
$$\ow{Q}(C\circ u) \leq \liminf_{n \to \infty} \ow{Q}(C\circ u^{(n)}) = \liminf_{n \to \infty} Q(C\circ u^{(n)}) \leq \liminf_{n \to \infty} Q(u^{(n)}) =\ow{Q}(u).$$
It remains to show that $\ow{Q}$ is a quadratic form. Let $a \in \RR$ and $u \in \ow{D}(Q)$ be given. Fatou's Lemma for $\ow{Q}$ and the fact that $Q$ is a quadratic form on $\{u \in D(Q)_{\rm loc} \, : \, Q(u) < \infty\}$ yields
$$\ow{Q}(u) \leq \liminf_{n\to \infty} \ow{Q}(\frac{1}{a} (au)^{(n)}) = \liminf_{n\to \infty} Q(\frac{1}{a} (au)^{(n)}) = \frac{1}{a^2} \ow{Q}(au).$$
For the inequality $\ow{Q}(u) \geq \frac{1}{a^2} \ow{Q}(au)$  we note, that for each $n$ the map $x \mapsto \frac{1}{a}(ax)^{(n)}$ is a normal contraction and compute
$$\frac{1}{a^2} \ow{Q}(au) = \lim_{n\to \infty}\frac{1}{a^2}Q( (au)^{(n)}) = \lim_{n\to \infty}Q(\frac{1}{a} (au)^{(n)}) = \lim_{n\to \infty}\ow{Q}(\frac{1}{a} (au)^{(n)}) \leq \ow{Q}(u).$$
Now, let $u,v \in \ow{D}(Q)$ be given.  Fatou's Lemma for $\ow{Q}$ and the fact that $Q$ is a quadratic form on $\{u \in D(Q)_{\rm loc} \, : \, Q(u) < \infty\}$ yields
\begin{align*}
 \ow{Q}(u+v) +  \ow{Q}(u-v) &\leq \liminf_{n\to \infty} (Q(u^{(n)} + v^{(n)}) +  (Q(u^{(n)} - v^{(n)})) \\
 &= \liminf_{n\to \infty} (2Q(u^{(n)}) +  2 Q(v^{(n)}))\\
 &= 2\ow{Q}(u) + 2 \ow{Q}(v).
\end{align*}
Since the above inequality is true for arbitrary functions, we can apply it to $u' = u + v$ and  $v' = u - v$ to obtain
$$4\ow{Q}(u) + 4 \ow{Q}(v) = \ow{Q}(u' + v') + \ow{Q}(u'-v') \leq 2 \ow{Q}(u') + 2 \ow{Q}(v') = 2 \ow{Q}(u+v) + 2 \ow{Q}(u-v).$$
The first equality is a consequence of $\ow{Q}(aw) = a^2 \ow{Q}(w)$ which was proven above. This finishes the proof.

\end{proof}

\begin{remark}
\begin{itemize}
  \item In general, $\widetilde{D}(Q)$ does not need to be included in
$L^2(X,m)$ and, hence,  $(\ow{Q},\widetilde{D}(Q))$ is not a Dirichlet form in the usual sense. See
\cite{Kuw3, CF} for the background of this definition and properties
in the quasi-regular case.
 \item The importance of $\ow{D}(Q)$ stems from the fact that $\ow{Q}$ is a finite quadratic form on this space inducing a bilinear form by
polarization (which will also be called $\ow{Q}$).  It will follow from the previous theorems  that this form does not only extend $(Q,D(Q))$ but also
provides an extension of $(Q,D(Q)_e)$. Furthermore, it yields the well known fact that $(\ow{Q},\ow{D}(Q)\cap~L^2(X,m))$ is a closed form (see, e.g., \cite{Kuw3} and \cite{Che}).
\item The above lower semi-continuity of $Q$ on its reflected Dirichlet space with respect to pointwise convergence seems to be new. As $D(Q)_e \subseteq \ow{D}(Q)$ (see below), we obtain an extension of \cite[Corollary 1.1.9]{CF} for regular Dirichlet forms.
\end{itemize}
\end{remark}

We now come to a crucial definition for the subsequent
considerations. In the previous sections we used various operators
to characterize the investigated properties. In the regular setting
we will see below that all these operators are restrictions of
$$\cL: D(\cL)\to L^1_{\rm loc}(X,m),$$
where
$$ D(\cL) = \{u \in \widetilde{D}(Q)\, | \, \exists f \in L^1_{\rm loc}(X,m)\;\forall
v \in D(Q) \cap C_c(X)\; \ow{Q}(u,v) = \as{f,v} \}$$ on
which it acts by
$$\cL u = f.$$ \label{Uber Laplace}

The following proposition is clear from the definitions.
\begin{prop}\label{Domain L regular} The operator $\cL$ is an extension of $L$. The domain of $L$ satisfies
$$D(L) = \{u \in D(Q) \cap D(\cL)\, \mid \, \cL u \in L^2(X,m)\}.$$
\end{prop}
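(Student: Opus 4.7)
The proof is essentially a bookkeeping exercise: both $L$ and $\cL$ are defined by a pairing identity against a class of test functions, and the task is to reconcile the two definitions against each other. My plan consists of one preparatory observation followed by two steps.

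The preparatory observation is that $D(Q) \subseteq \ow{D}(Q)$ with $\ow{Q}$ agreeing with $Q$ on $D(Q)$. This is exactly what is recorded in the remark following the definition of the reflected Dirichlet space: the chain $D(Q) \subseteq \{u \in D(Q)_{\rm loc} : Q(u) < \infty\} \subseteq \ow{D}(Q)$ is immediate, and the two quadratic forms coincide on the final subspace. With this in hand I would verify that $\cL$ extends $L$: for $u \in D(L)$ the defining identity $Q(u,v) = \langle Lu, v \rangle$ for $v \in D(Q)$ translates, via the above identification, into $\ow{Q}(u,v) = \langle Lu, v \rangle$ for every $v \in D(Q) \cap C_c(X)$. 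Since $m$ is a Radon measure, $L^2(X,m) \subseteq L^1_{\rm loc}(X,m)$ by Cauchy--Schwarz, so $Lu$ is an admissible choice for the function $f$ demanded in the definition of $D(\cL)$. Hence $u \in D(\cL)$ with $\cL u = Lu$.

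For the characterization of $D(L)$, the inclusion ``$\subseteq$'' is immediate from the previous step. For the converse, take $u \in D(Q) \cap D(\cL)$ with $\cL u \in L^2(X,m)$; the defining property of $D(\cL)$, combined with $\ow{Q} = Q$ on $D(Q)$, yields $Q(u,v) = \langle \cL u, v \rangle$ for every $v \in D(Q) \cap C_c(X)$. The only non-trivial step is to promote this equality to all of $D(Q)$; for this I would invoke regularity of $Q$, which supplies $\aV{\cdot}_Q$-density of $D(Q) \cap C_c(X)$ in $D(Q)$. Both functionals $v \mapsto Q(u,v)$ and $v \mapsto \langle \cL u, v \rangle$ are $\aV{\cdot}_Q$-continuous, the first by Cauchy--Schwarz for $Q$ with $u$ fixed, the second because $\aV{v}_2 \leq \aV{v}_Q$ and $\cL u \in L^2(X,m)$. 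The resulting identity $Q(u,v) = \langle \cL u, v \rangle$ for all $v \in D(Q)$ is precisely the variational characterization of $u \in D(L)$ with $Lu = \cL u$. No genuine obstacle arises here; regularity of $Q$ is the hypothesis that makes the whole scheme go through.
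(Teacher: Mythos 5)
Your argument is correct and is exactly the natural unpacking of the definitions: the paper itself gives no details, stating only that the proposition is ``clear from the definitions,'' and your two steps (using $\ow{Q}=Q$ on $D(Q)$, $L^2\subseteq L^1_{\rm loc}$ for the Radon measure $m$, and the $\aV{\cdot}_Q$-density of $D(Q)\cap C_c(X)$ supplied by regularity to pass from test functions to all of $D(Q)$) are precisely what that phrase is hiding. So you take essentially the same approach, just spelled out in full.
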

\begin{remark}
In a certain sense $\cL$ is a distributional extension of $L$. In
many situations its domain and action are known explicitly, see
Sections \ref{section;Graph}, \ref{section;Manifold} and
\ref{section;MG}.
\end{remark}

%%%%%%%%%%%%%%%%%%%%%%%%%%%%%%%%%%%%%%%%%%%%%%%%%%%%%%%%%%%%%%%%%%%%%%%%%%%%

\subsection{Extensions of Dirichlet forms: The uniqueness of Silverstein\rq{}s
extension}
\label{subsection;Silverstein}
We will now apply the theory of Section \ref{section;Extensions} to the form $Q$
on $\mathcal{D} = D(Q)$ and to $\ow{Q}$ on  $\Ds = D(Q^{\max}):=\ow{D}(Q) \cap L^2(X,m)$. We
think of $D(Q)$ as encoding ``Dirichlet boundary conditions'' and of $D(Q^{\max})$
as encoding ``Neumann type boundary conditions''. We write $Q^{\max}$ whenever we
refer to $\ow{Q}$ on $D(Q^{\max})$ and denote the associated positive operator by
$L^{\max}$. The following propositions assure that the theory of Section~\ref{section;Extensions} can be applied.

\begin{prop}[$Q^{\max}$ as Dirichlet form] The form
$Q^{\max}$ is a Dirichlet form. The space $D(Q^{\max}) \cap L^\infty(X,m)$ is given by
those $u \in D(Q)_{\rm loc} \cap L^2(X,m) \cap L^\infty(X,m)$ satisfying
$$\int_X
d\mu^{(c)}(u) +  \iint_{X \times X \setminus {\it diag}} (\tilde u
(x) - \tilde u (y))^2\, J(dx,dy)
     +\int_X \tilde u^2 dk < \infty.$$
\end{prop}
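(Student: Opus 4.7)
The plan is to verify the three defining axioms of a Dirichlet form (dense domain, closedness, Markov property) directly from Theorem \ref{Fatou} and Theorem \ref{thm:Quadratic form and cut-off}, and then read off the bounded part from the Beurling-Deny formula and the remark following the definition of $\ow{D}(Q)$. Density of $D(Q^{\max})$ in $L^2(X,m)$ is immediate, since the inclusion $D(Q) \subseteq D(Q^{\max}) = \ow{D}(Q) \cap L^2(X,m)$ holds (each $u \in D(Q)$ satisfies $\ow{Q}(u) = Q(u) < \infty$) and $D(Q)$ is itself dense in $L^2(X,m)$ by the regularity of $Q$. Nonnegativity of $Q^{\max}$ is built into the construction of $\ow{Q}$ as an increasing limit of nonnegative quantities, and bilinearity is provided by Theorem \ref{thm:Quadratic form and cut-off}.

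For closedness I would argue as follows. Let $(u_n) \subseteq D(Q^{\max})$ be Cauchy with respect to $\|\cdot\|_{Q^{\max}} = (Q^{\max}(\cdot) + \|\cdot\|_2^2)^{1/2}$. Then $(u_n)$ is Cauchy in $L^2(X,m)$, so converges to some $u \in L^2(X,m)$, and after passing to a subsequence we may assume $u_n \to u$ $m$-a.e. Theorem \ref{Fatou} then yields $\ow{Q}(u) \leq \liminf_{n} \ow{Q}(u_n) < \infty$, so $u \in D(Q^{\max})$. To upgrade this to convergence in the form norm, I would fix $n$, apply Theorem \ref{Fatou} to the sequence $(u_m - u_n)_m$ (which converges $m$-a.e.\ to $u - u_n$), and conclude $\ow{Q}(u - u_n) \leq \liminf_m \ow{Q}(u_m - u_n)$, which tends to $0$ as $n \to \infty$ by the Cauchy property.

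The Markov property is a direct transfer from Theorem \ref{thm:Quadratic form and cut-off}: for any normal contraction $C$, one has $|C \circ u| \leq |u|$ pointwise (since $C(0) = 0$ and $C$ is $1$-Lipschitz), so $u \in L^2(X,m)$ forces $C \circ u \in L^2(X,m)$, while Theorem \ref{thm:Quadratic form and cut-off} already delivers $C \circ u \in \ow{D}(Q)$ together with $\ow{Q}(C \circ u) \leq \ow{Q}(u)$. Combining these two facts gives $C \circ u \in D(Q^{\max})$ with $Q^{\max}(C \circ u) \leq Q^{\max}(u)$, which is the contraction property. For the bounded characterization, I would invoke the remark following the definition of the reflected Dirichlet space, namely $\ow{D}(Q) \cap L^\infty(X,m) = \{u \in D(Q)_{\rm loc} \cap L^\infty(X,m) \mid Q(u) < \infty\}$, where $Q$ is understood as the Beurling-Deny extension to $D(Q)_{\rm loc}$. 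Intersecting with $L^2(X,m)$ produces $D(Q^{\max}) \cap L^\infty(X,m)$, and finiteness of $Q(u)$ in the extended sense is by definition equivalent to finiteness of the sum of the strongly local, jump, and killing integrals displayed in the proposition.

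The main obstacle is closedness, which hinges on lower semicontinuity of $\ow{Q}$ under $m$-a.e.\ convergence; this is exactly the content of Theorem \ref{Fatou}, so the argument reduces to an unremarkable application once that tool is in hand. The only mild subtlety is that a priori $\ow{D}(Q)$ is only a subset of $L^0(X,m)$, so the $L^2$-convergence extracted from the form-Cauchy sequence, together with the $m$-a.e.\ subsequence trick, is what bridges the gap back into $L^2(X,m)$.
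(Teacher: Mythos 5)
Your argument is correct and follows essentially the same route as the paper: closedness via the Fatou-type lower semicontinuity of $\ow{Q}$ (Theorem \ref{Fatou}), the Markov and quadratic-form properties via Theorem \ref{thm:Quadratic form and cut-off} together with $|C\circ u|\leq |u|$, and the description of the bounded part from the agreement of $\ow{Q}$ with the Beurling--Deny extension of $Q$ to $D(Q)_{\rm loc}$ on $L^\infty(X,m)$. You merely spell out the details (density, the two-fold application of Fatou's lemma) that the paper leaves implicit.
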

\begin{proof} The closedness of $Q^{\max}$ follows from Fatou's lemma, Theorem \ref{Fatou} while the fact that $Q^{\max}$ is a Markovian quadratic form is a consequence of Theorem \ref{thm:Quadratic form and cut-off}. The fact about the action of $Q^{\max}$ on bounded function is a consequence of the fact that $\ow{Q}$ and the extension of $Q$ to $D(Q)_{\rm loc}$ agree on $L^\infty(X,m)$. This finishes the proof.
\end{proof}

\begin{prop} \label{MP regular} The forms $(Q,D(Q))$ and $(Q^{\max},D(Q^{\max}))$
satisfy the maximum principle (MP), i.e.,
 positive $f \in L^2(X,m)$ satisfy the inequality
$$(L+\alpha)^{-1}f \leq (L^{\max}+\alpha)^{-1}f.$$

\end{prop}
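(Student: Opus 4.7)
The plan is to apply Theorem~\ref{max principle} with an exhausting sequence of relatively compact sets. Since $X$ is locally compact and separable, we can choose an increasing sequence $(G_n)$ of relatively compact open subsets with $\bigcup_n G_n = X$. Set
$$\mathcal{C} = \bigcup_{n=1}^\infty D(Q^{\max}_{G_n}).$$
To conclude, we need to verify two things: (a) $\mathcal{C} \subseteq D(Q)$, and (b) $\mathcal{C}$ is $\|\cdot\|_Q$-dense in $D(Q)$. Note that $(Q, Q^{\max})$ satisfies (S) since $D(Q) \subseteq \ow{D}(Q) \cap L^2(X,m) = D(Q^{\max})$ and the two forms agree on $D(Q)$ by the remark following the definition of $\ow{D}(Q)$.

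For (b), which is the easier half, I would use regularity of $Q$: the space $C_c(X) \cap D(Q)$ is $\|\cdot\|_Q$-dense in $D(Q)$, and any $u \in C_c(X) \cap D(Q) \subseteq D(Q^{\max})$ has $\mathrm{supp}(u) \subseteq G_n$ for $n$ sufficiently large, so that $u \in D(Q^{\max}_{G_n}) \subseteq \mathcal{C}$.

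The main work is (a). Fix $u \in D(Q^{\max}_{G_n})$; the plan is to approximate $u$ by its truncations $u^{(k)} = (u\wedge k)\vee(-k)$ and invoke closedness. The truncations belong to $D(Q)_{\rm loc} \cap L^\infty(X,m)$ and vanish outside $G_n$, so they have compact essential support in $\overline{G_n}$. By Proposition~\ref{some properties}~(c), $u^{(k)} \in D(Q)$. By dominated convergence $u^{(k)} \to u$ in $L^2(X,m)$ (dominated by $|u|$), and by the very definition of $\ow{Q}$ together with the monotonicity from Proposition~\ref{some properties}~(b) we have
$$Q(u^{(k)}) \nearrow \ow{Q}(u) = Q^{\max}(u) < \infty.$$
Since $Q$ is closed, $(D(Q),\|\cdot\|_Q)$ is a Hilbert space, so a $Q$-bounded sequence is weakly relatively compact; the weak $L^2$-limit of any weak $D(Q)$-subsequential limit must coincide with the strong $L^2$-limit $u$. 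Hence $u \in D(Q)$.

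The main obstacle is exactly this closedness step in (a): one must be careful that the apparent circular identification of the weak limit with $u$ is justified, which is why I appeal to the continuity of the embedding $(D(Q),\|\cdot\|_Q) \hookrightarrow L^2(X,m)$ (equivalently, one could use Mazur's lemma to produce strongly convergent convex combinations, as is done in the proof of the Fatou property in Theorem~\ref{Fatou}). Once (a) and (b) are established, Theorem~\ref{max principle} applies directly and gives the claimed inequality $(L+\alpha)^{-1}f \leq (L^{\max}+\alpha)^{-1}f$ for all nonnegative $f \in L^2(X,m)$.
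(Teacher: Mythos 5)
Your proof is correct, and its overall skeleton is exactly the paper's: apply Theorem~\ref{max principle} to a relatively compact open exhaustion $(G_n)$, observe $C_c(X)\cap D(Q)\subseteq\mathcal{C}$ so that regularity gives denseness, and reduce everything to the inclusion $\mathcal{C}\subseteq D(Q)$. Where you diverge is in how that inclusion is verified. The paper first says ``without loss of generality $u\in L^\infty(X,m)$'' (leaving the reduction implicit), then uses $D(Q^{\max})\cap L^\infty(X,m)\subseteq D(Q)_{\rm loc}$ to find $v\in D(Q)\cap L^\infty(X,m)$ agreeing with $u$ on $G_{n+1}$, a cut-off $\varphi\in D(Q)\cap L^\infty(X,m)$ with $\varphi=1$ on $G_n$ and $\varphi=0$ off $G_{n+1}$, and concludes $u=u\varphi=v\varphi\in D(Q)$ from the algebra property of $D(Q)\cap L^\infty(X,m)$ (\cite[Theorem 1.4.2]{FOT}). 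You instead apply Proposition~\ref{some properties}~(c) directly to the truncations $u^{(k)}$, which are bounded, lie in $D(Q)_{\rm loc}$ by the definition of $\ow{D}(Q)$, and are supported in the compact set $\overline{G_n}$, and then pass to $u$ by the uniform bound $Q(u^{(k)})\leq\ow{Q}(u)$ together with closedness of $Q$ (weak compactness in $(D(Q),\aV{\cdot}_Q)$ and identification of the weak limit through the continuous embedding into $L^2$, or Banach--Saks/Mazur). This is a legitimate alternative: Proposition~\ref{some properties}~(c) (which in the paper rests on \cite[Theorem 3.5]{FLW}) replaces the cut-off-plus-algebra argument, and your explicit truncation-and-closedness step is precisely what is needed to justify the paper's unelaborated reduction to bounded $u$, so your write-up is in that respect more complete; the paper's version avoids invoking Proposition~\ref{some properties}~(c) and stays with the elementary algebra property once boundedness is assumed.
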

\begin{proof} We use Theorem \ref{max principle} with $\mathcal{D} = D(Q)$ and
$\Ds = D(Q^{\max})$. Choose $(G_n)$ to be an increasing sequence of open,
relatively compact sets such that $\overline{G_n} \subseteq G_{n+1}$ and $\cup G_n = X$. Then, the inclusion
$$C_c(X) \cap D(Q) \subseteq \mathcal{C} := \bigcup_{n \geq 1} D(Q^{\max}_{G_n})$$
holds, where $D(Q^{\max}_{G_n}) = \{u \in D(Q^{\max}) \mid u = 0\; m\text{-a.e. on } X \setminus G_n\}.$  Furthermore, we have $\mathcal{C} \subseteq D(Q)$. To see this, let $u \in D(Q^{\max}_{G_n})$ be given. Without loss of generality, we may assume $u \in L^\infty(X,m)$. Since $D(Q^{\max}) \cap L^\infty(X,m) \subseteq D(Q)_{\rm loc}$, there exists a function $v \in D(Q) \cap L^\infty(X,m)$ such that $v = u$ on $G_{n+1}$. By the regularity of $Q$ there exists a function $\varphi \in D(Q)\cap L^\infty(X,m)$ such that $\varphi = 1$ on $G_n$ and $\varphi  = 0$ on $G_{n+1}$. Since $D(Q)\cap L^\infty(X,m)$ is an algbra, c.f. \cite[Theorem 1.4.2]{FOT}, we obtain $u = u \varphi = v \varphi \in D(Q)$. Now, Theorem \ref{max principle} shows  the statement as $C_c(X) \cap D(Q)$ is dense in $D(Q)$.
\end{proof}

\begin{remark}
\begin{itemize} \item In the literature $(Q^{\max},D(Q^{\max}))$ is called the active
reflected Dirichlet space of $(Q,D(Q))$. This terminology stems from
the following observation. If $(Q,D(Q))$ is the standard Dirichlet
energy on an open subdomain of $\mathbb{R}^n$ considered  with
Dirichlet boundary conditions, then $(Q^{\max},D(Q^{\rm max}))$ is
the Dirichlet energy with Neumann boundary conditions. In terms of
stochastic processes this means that $(Q,D(Q))$ corresponds to
Brownian motion which is killed upon hitting the boundary while
$(Q^{\max},D(Q^{\rm max}))$ corresponds to Brownian motions which is
reflected at the boundary.

\item We use the notation $Q^{\max}$ as it is the maximal Silverstein
extension of $Q$ (see e.g. \cite{Kuw3, CF}). Recall that an
extension $\hat Q$ of $Q$ is called a \emph{Silverstein extension}
if $u \cdot v \in D(Q)$ for all $u \in D(Q) \cap L^\infty (X)$, $v
\in D(\hat Q) \cap L^\infty (X)$.
\end{itemize}
\end{remark}

Recall the definition of the operator $L'$ in Section \ref{section;Extensions}.

\begin{prop}
If $\mathcal{D} = D(Q)$ and $\Ds = D(Q^{\rm max})$, then the domain of $L'$ satisfies
$$D(L') = \{u \in D(\cL )\cap L^2(X,m)\, |\, \cL u \in L^2(X,m)\}.$$
\end{prop}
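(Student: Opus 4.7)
The plan is to prove two inclusions separately. Throughout, recall that in the present setting $L'$ has domain
$$D(L') = \{u \in D(Q^{\max}) \mid \exists w \in L^2(X,m) \text{ with } \ow{Q}(u,v) = \langle w, v\rangle \text{ for all } v \in D(Q)\}$$
with $L'u = w$, while $\cL$ is characterized by the same duality tested only against $v \in D(Q) \cap C_c(X)$ and with $w$ merely in $L^1_{\rm loc}(X,m)$.

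For the inclusion $D(L') \subseteq \{u \in D(\cL) \cap L^2 \mid \cL u \in L^2\}$, I would just unfold definitions. If $u \in D(L')$, then $u \in D(Q^{\max}) = \ow{D}(Q) \cap L^2(X,m)$, and there exists $w = L'u \in L^2(X,m) \subseteq L^1_{\rm loc}(X,m)$ with $\ow{Q}(u,v) = \langle w,v\rangle$ for every $v \in D(Q)$, in particular for every $v \in D(Q) \cap C_c(X)$. Hence $u \in D(\cL)$ with $\cL u = w \in L^2(X,m)$.

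For the reverse inclusion, the task is to upgrade the duality from $D(Q) \cap C_c(X)$ to all of $D(Q)$. Given $u$ in the right-hand side, I would fix $v \in D(Q)$ and, using the regularity of $Q$, choose a sequence $v_n \in D(Q) \cap C_c(X)$ with $v_n \to v$ in $\aV{\cdot}_Q$. Since $\cL u \in L^2(X,m)$ and $v_n \to v$ in $L^2(X,m)$, the pairings $\langle \cL u, v_n\rangle$ converge to $\langle \cL u, v\rangle$. The main task is to show $\ow{Q}(u,v_n) \to \ow{Q}(u,v)$. For this I would invoke the Cauchy–Schwarz inequality for the bilinear form induced by the quadratic form $\ow{Q}$ on $\ow{D}(Q)$ (Theorem \ref{thm:Quadratic form and cut-off}), giving
$$|\ow{Q}(u, v_n - v)| \leq \ow{Q}(u)^{1/2}\, \ow{Q}(v_n - v)^{1/2}.$$

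The one point requiring care is that $\ow{Q}(v_n - v)$ is controlled by $\aV{v_n - v}_Q$. This follows from the identity $\ow{Q} = Q$ on $D(Q)$: any $v \in D(Q)$ lies in $D(Q)_{\rm loc}$ with $Q(v) < \infty$, and $v^{(k)} \to v$ in $\aV{\cdot}_Q$ yields $\ow{Q}(v) = \lim_k Q(v^{(k)}) = Q(v)$ by Proposition \ref{some properties}~(b). Applied to $v_n - v \in D(Q)$ this gives $\ow{Q}(v_n - v) = Q(v_n - v) \to 0$. Combining these, $\ow{Q}(u,v) = \langle \cL u, v\rangle$ holds for every $v \in D(Q)$, whence $u \in D(L')$ with $L'u = \cL u$. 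The main (though mild) obstacle is precisely this continuity argument on the $\ow{Q}$ side, since $\ow{Q}$ is a priori only defined on the larger space $\ow{D}(Q)$; the coincidence $\ow{Q} = Q$ on $D(Q)$ and the Cauchy–Schwarz inequality for the associated bilinear form are what make the passage work.
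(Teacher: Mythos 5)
Your argument is correct and is precisely the argument the paper leaves implicit (its proof reads only ``this follows from the definitions and the regularity of $Q$''): the forward inclusion is definition-unfolding, and the reverse inclusion upgrades the test-function class from $D(Q)\cap C_c(X)$ to $D(Q)$ via regularity, the identity $\ow{Q}=Q$ on $D(Q)$ (Proposition~\ref{some properties}~(b)), and Cauchy--Schwarz for the nonnegative quadratic form $\ow{Q}$. No gaps.
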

\begin{proof} This follows from the definitions and the regularity of $Q$.
\end{proof}

Therefore, our main statement of Section \ref{section;Extensions},
Theorem~\ref{thm;2.2},  has  now an immediate consequence.
\begin{theorem} \label{H neq H0}
Let $Q$ be a regular Dirichlet form. Then the following assertions are
equivalent.\begin{itemize}
\item[(i)] $Q \neq Q^{\max}$.
\item[(ii)] There exists a nontrivial $u \in D(\cL)\cap L^1(X,m)\cap L^2(X,m)$
such that $\cL u \in L^2(X,m)$ with $u\geq 0$, $\cL u \leq 0$ and $ \cL u \neq
0$.
\end{itemize}
If the assertions hold, then  the function $u$ in $\mbox{\rm (ii)}$ can additionally be
chosen to be bounded.
\end{theorem}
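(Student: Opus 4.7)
The plan is to apply Theorem~\ref{thm;2.2} with $\LD = D(Q)$ and $\Ds = D(Q^{\max})$, and then to rewrite the condition ``$u \in D(L')$'' in terms of the operator $\cL$. Two preliminary checks are needed, and everything else will reduce to a direct translation.

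The first check is that the standing assumption (S) of Section~\ref{section;Extensions} holds for the pair $(Q, Q^{\max})$. The Dirichlet form property of both is already available from the preceding proposition; the inclusion $D(Q) \subseteq D(Q^{\max})$ together with the agreement $\ow{Q}|_{D(Q)} = Q$ I would obtain by applying the contraction property of $Q$ to the truncations $u^{(n)}$: for $u \in D(Q)$ each $u^{(n)}$ lies in $D(Q) \subseteq D(Q)_{\rm loc}$ with $Q(u^{(n)}) \leq Q(u)$, so $u \in D(Q)_{\rm loc}^\infty$ and $\ow{Q}(u) = \lim_n Q(u^{(n)}) = Q(u)$, the last equality coming from the closedness of $Q$ together with the $L^2$-convergence $u^{(n)} \to u$. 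The maximum principle (MP) is already provided by Proposition~\ref{MP regular}.

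The second check is the identification of the abstract operator $L'$ with a restriction of $\cL$. By the proposition preceding the theorem, the domain of $L'$ equals $\{u \in D(\cL) \cap L^2(X,m) : \cL u \in L^2(X,m)\}$; on this domain the two operators coincide because regularity of $Q$ makes $D(Q) \cap C_c(X)$ dense in $D(Q)$ with respect to $\aV{\cdot}_Q$, so the defining testing identity for $L'u$ (against $D(Q)$) and for $\cL u$ (against $D(Q) \cap C_c(X)$) agree whenever the representing function lies in $L^2(X,m)$.

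With these two points in place, Theorem~\ref{thm;2.2} applies verbatim: $Q \neq Q^{\max}$ is equivalent to the existence of a nontrivial $u \in L^1(X,m) \cap D(L')$ with $u \geq 0$, $L'u \leq 0$ and $L'u \neq 0$, which via the identification above is precisely condition (ii). The theorem also supplies the supplementary statement that $u$ may additionally be chosen in $L^1(X,m) \cap L^\infty(X,m)$ (any such function automatically lies in $L^2(X,m)$), giving the boundedness claim. I expect the most delicate step to be the identification $L' = \cL$ on the stated domain, since the two operators are defined by testing against different function spaces and the equality requires invoking form-regularity; once that is settled the theorem is a direct consequence of the abstract result of Section~\ref{section;Extensions}.
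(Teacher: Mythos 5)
Your proposal is correct and follows essentially the same route as the paper: the paper also obtains Theorem~\ref{H neq H0} as a direct consequence of Theorem~\ref{thm;2.2}, using the preceding propositions that $Q^{\max}$ is a Dirichlet form, that $(Q,Q^{\max})$ satisfies (MP) (Proposition~\ref{MP regular}), and that $D(L') = \{u \in D(\cL)\cap L^2(X,m) \mid \cL u \in L^2(X,m)\}$ with $L' = \cL$ there. You merely spell out the verification of (S) and the density argument behind the identification of $L'$ with $\cL$, which the paper leaves as ``immediate from the definitions and regularity''.
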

We can now  also give a characterization of $Q = Q^{\max}$ in terms
of Green's formula.
\begin{theorem}[Characterization of $Q = Q^{\max}$ for regular forms via Greens formula]
 \label{thm;US1}
Let $Q$ be a regular Dirichlet form and $L$ its associated operator. Then the
following assertions are equivalent.
\begin{itemize}
\item[(i)] $Q = Q^{\max}$.
\item[(ii)] $D(L) = \{u \in D(\cL )\cap L^2(X,m)\, | \, \cL u \in L^2(X,m)\}$.
\item[(iii)] For all $u \in D(\cL)\cap L^2(X,m)$ such that $\cL u \in L^2(X,m)$
and all $v \in \ow{D}(Q)\cap L^2(X,m)$ the following equality holds
$$\ow{Q} (u,v) = \int_X \cL uv\, dm.$$
\end{itemize}
\end{theorem}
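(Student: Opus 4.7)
The plan is to establish the cyclic implications (i) $\Rightarrow$ (ii) $\Rightarrow$ (iii) $\Rightarrow$ (i). Throughout I would freely use the proposition immediately preceding the theorem, which identifies
$$D(L') = \{u\in D(\cL)\cap L^2(X,m)\mid \cL u\in L^2(X,m)\},$$
and I would note that on this common domain $\cL u = L'u$ (compare the two definitions and use that $D(Q)\cap C_c(X)$ is $\aV{\cdot}_Q$-dense in $D(Q)$ by regularity, together with the Cauchy--Schwarz bound for $\ow{Q}$ coming from Theorem~\ref{thm:Quadratic form and cut-off}).

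For (i) $\Rightarrow$ (ii), the assumption $Q = Q^{\max}$ means $D(Q) = D(Q^{\max})$, so the defining relation for $L'$ (testing $\ow{Q}(u,\cdot)$ against $v \in D(Q)$) coincides with the defining relation for the self-adjoint generator $L$. Hence $D(L') = D(L)$, and (ii) is immediate from the preceding proposition.

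For (ii) $\Rightarrow$ (iii), the first task is to upgrade (ii) to $Q = Q^{\max}$. Proposition~\ref{generator} applied to the pair $(Q,Q^{\max})$ tells us that $L'$ extends both $L$ and $L^{\max}$. Hence $D(L^{\max}) \subseteq D(L') = D(L)$ with $L = L^{\max}$ on $D(L^{\max})$; since both operators are self-adjoint, this chain of inclusions forces $L = L^{\max}$, whence $Q = Q^{\max}$ and in particular $\ow{D}(Q)\cap L^2(X,m) = D(Q^{\max}) = D(Q)$. The rest is the standard form--generator identity: any $u \in D(\cL)\cap L^2(X,m)$ with $\cL u \in L^2(X,m)$ lies in $D(L') = D(L)$ with $Lu = \cL u$, so $\ow{Q}(u,v) = Q(u,v) = \langle Lu,v\rangle = \langle \cL u, v\rangle$ for every $v \in D(Q) = \ow{D}(Q)\cap L^2(X,m)$.

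For (iii) $\Rightarrow$ (i), I would appeal to Corollary~\ref{coro;3.3}: it suffices to show that $\mathcal{H} = \{u \in D(Q^{\max}) \mid L'u = -u\}$ is trivial. Given $u \in \mathcal{H}$, the preceding proposition places $u$ in $D(\cL)\cap L^2(X,m)$ with $\cL u = L'u = -u \in L^2(X,m)$. Testing (iii) against $v = u$ yields
$$\ow{Q}(u,u) = \int_X \cL u\cdot u\, dm = -\|u\|_2^2,$$
and since $\ow{Q}(u,u) \geq 0$, both sides vanish, forcing $u = 0$. The only non-routine step in the whole argument is the self-adjointness bootstrap inside (ii) $\Rightarrow$ (iii): without it, (ii) only gives the form--generator identity for test functions $v \in D(Q)$, whereas (iii) asks for $v$ ranging over the a~priori larger test class $\ow{D}(Q)\cap L^2(X,m)$.
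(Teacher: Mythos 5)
Your argument is correct, and for the first two implications it is essentially the paper's proof: (i) $\Rightarrow$ (ii) is the identification of $D(L')$ with $\{u \in D(\cL)\cap L^2(X,m) \mid \cL u \in L^2(X,m)\}$ plus the observation that under $Q = Q^{\max}$ the defining relation for $L'$ is that of $L$, and (ii) $\Rightarrow$ (iii) is the same self-adjointness bootstrap ($L^{\max}$ becomes a restriction of $L$, hence $L = L^{\max}$ and $Q = Q^{\max}$) that the paper uses, merely phrased through $L'$ and Proposition~\ref{generator}. The genuine difference is (iii) $\Rightarrow$ (i). The paper argues by contraposition via Theorem~\ref{H neq H0}: if $Q \neq Q^{\max}$ there is a nontrivial $u \ge 0$ with $\cL u \le 0$, $\cL u \ne 0$, and testing the Green formula with $v = u$ is claimed to give $\int_X \cL u\, u\, dm < 0$, contradicting (iii); this route rests (through Theorem~\ref{thm;2.2}) on the maximum principle of Proposition~\ref{MP regular} and the $L^1$-integration argument of Section~\ref{section;Extensions}. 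You instead go directly through Corollary~\ref{coro;3.3}: any $1$-harmonic $u \in \mathcal{H}$ lies in $D(\cL)\cap L^2(X,m)$ with $\cL u = -u$, so (iii) with $v = u$ yields $0 \le \ow{Q}(u,u) = -\|u\|_2^2$, forcing $u = 0$, hence $D(Q) = D(Q^{\max})$ and (since the forms agree on $D(Q)$) $Q = Q^{\max}$. Your route is more economical: it needs only condition (S) and the Hilbert-space decomposition of Section~\ref{section;Extensions}, not (MP) or the subsolution machinery, and it also sidesteps a small leap in the paper's contradiction step --- from $u \ge 0$, $\cL u \le 0$, $\cL u \ne 0$ alone one only gets $\int_X \cL u\, u\, dm \le 0$; strict negativity uses the extra information from the construction that $u$ may be taken $1$-harmonic, which is exactly what your argument exploits directly. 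What the paper's route buys in exchange is the reuse of the already proved Theorem~\ref{H neq H0}, tying the failure of the Green formula to the existence of positive subharmonic $L^1$-functions.
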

\begin{proof}
(i) $\Longrightarrow$ (ii): The domain of $L$ is given by
\begin{gather*}
D(L) = \{u\in D(Q)\mid \mbox{there exists }w\in L^2(X,m) \text{ s.t. } Q(u,v)
= \as{w,v} \text{ for all }v \in D(Q)\}. \label{D(L)}
\end{gather*}
Since $Q$ is regular, this implies $D(L) \subseteq \{u \in D(\cL
)\cap L^2(X,m)\mid \cL u \in L^2(X,m)\}$.

Now let $u \in D(\cL)\cap L^2(X,m)$ with $\cL u \in L^2(X,m)$ be
given. Then, the definition of $\cL$ and of $Q^{\max}$ together with
the equality $Q =Q^{\max}$ imply
$$\as{\cL u,v} = Q^{\max}  (u,v) = Q (u,v)$$
for all $v \in D(Q)$. This directly gives $u \in D(L)$ (and $L u =
\cL u$).

(ii) $\Longrightarrow$ (iii):
%Using the definition  of ${D(L)}$ with $Q^{\max}$ instead of $Q$,
The definition of $L^{\max}$ shows  $D(L^{\max}) \subseteq \{u \in
D(\cL)\cap L^2(X,m): \cL u \in L^2(X,m)\}$ and $\cL u = L^{\max} u$
for $u\in D (L^{\max})$.  Then (ii) gives that $L^{\max}$ is a
restriction of $L$. As both $L$ and $L^{\max}$ are selfadjoint, we
infer $L =L^{\max}$ and this easily  yields (iii).

(iii) $\Longrightarrow$ (i): Assume $Q \neq Q^{\max}$. Then, by Theorem~\ref{H neq H0},
there exists    $u \in D(\cL)\cap L^2(X,m)$  such that $\cL u \in L^2(X,m)$
with $u\geq 0$, $\cL u \leq 0$ and $ \cL u \neq 0$. This $u$ satisfies
$Q(u,u)\geq 0$ and
$$\int_X \cL uu\, dm < 0$$
which contradicts (iii).\end{proof}

\subsection{Stochastic completeness}
\label{subsection;SC} In the regular setting we can  give a more
explicit characterization of stochastic completeness since we can
compute the domain of $L$. For this the following proposition is
needed.

\begin{prop}\label{SCimpliesREG}
Let a regular Dirichlet form $Q$ be stochastically complete. Then $Q
= Q^{\max}$.
\end{prop}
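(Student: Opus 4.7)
The plan is to derive a contradiction by comparing the resolvents $(L+1)^{-1}$ and $(L^{\max}+1)^{-1}$ directly: stochastic completeness forces equality of their integrals on non-negative $L^1\cap L^2$ inputs, which together with the maximum principle pins them equal pointwise. Note first that $(Q,Q^{\max})$ satisfies the situation (S) (since $Q^{\max}$ extends $Q$ and agrees with it on $D(Q)$ by the Beurling--Deny construction) and that (MP) holds by Proposition~\ref{MP regular}.

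For an arbitrary non-negative $f\in L^1(X,m)\cap L^2(X,m)$, set $u:=(L+1)^{-1}f\in D(L)\cap L^1(X,m)$. By Lemma~\ref{SC-one} applied with $p=1$, the consistency of resolvents gives $u=(L^{(1)}+1)^{-1}f\in D(L^{(1)})$ with $L^{(1)}u=f-u\in L^1(X,m)$. Stochastic completeness and the equivalence (i)$\Leftrightarrow$(ii) of Theorem~\ref{thm;SC} then yield $\int_X L^{(1)}u\,dm=0$, i.e.
$$\int_X (L+1)^{-1}f\,dm=\int_X f\,dm.$$
On the other hand, $(L^{\max}+1)^{-1}$ is the resolvent of the Dirichlet form $Q^{\max}$ and therefore extends to a contraction on $L^1(X,m)$; since $f\ge 0$ implies $(L^{\max}+1)^{-1}f\ge 0$, this gives
$$\int_X (L^{\max}+1)^{-1}f\,dm\le \int_X f\,dm.$$
Combining these two with the maximum principle $(L+1)^{-1}f\le (L^{\max}+1)^{-1}f$ forces equality throughout, so
$$\int_X\bigl((L^{\max}+1)^{-1}f-(L+1)^{-1}f\bigr)\,dm=0$$
with a non-negative integrand; hence the integrand vanishes $m$-a.e.

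Thus $(L+1)^{-1}$ and $(L^{\max}+1)^{-1}$ agree on all non-negative $f\in L^1\cap L^2$, and by linearity on $L^1\cap L^2$, which is dense in $L^2(X,m)$. By $L^2$-continuity the two resolvents coincide on $L^2(X,m)$, so $L=L^{\max}$, and since a Dirichlet form is uniquely determined by its associated self-adjoint operator, $Q=Q^{\max}$. There is no serious obstacle here: the only subtle points are to pass between the $L^2$- and $L^1$-generators via Lemma~\ref{SC-one} when invoking Theorem~\ref{thm;SC}, and to use that the Markovian property of $Q^{\max}$ (verified in the proposition preceding~\ref{MP regular}) provides the $L^1$-contractivity of $(L^{\max}+1)^{-1}$ that makes the upper bound work.
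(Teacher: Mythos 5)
Your argument is correct, and it reaches the conclusion by a somewhat different route than the paper. The paper's proof also rests on the maximum principle of Proposition~\ref{MP regular}, but it works on the ``$L^\infty$ side'': it takes $e_n \in D(Q)\cap C_c(X)$ with $e_n \uparrow 1$, uses stochastic completeness and (MP) to deduce $(L^{\max}+1)^{-1}1 = 1$, and then shows $\langle (L^{\max}+1)^{-1}f - (L+1)^{-1}f, 1\rangle = 0$ by moving the resolvents onto $e_n$ via symmetry and passing to the limit with Lebesgue's theorem. You instead work on the ``$L^1$ side'': stochastic completeness, through Theorem~\ref{thm;SC} (legitimately available, as it is proved earlier for general Dirichlet forms), gives exact mass conservation $\int (L+1)^{-1}f\,dm = \int f\,dm$ for $0\le f\in L^1\cap L^2$, while the Markovian/$L^1$-contraction property of the $Q^{\max}$-resolvent gives the matching upper bound, so (MP) forces the nonnegative difference of resolvents to have zero integral and hence to vanish; density then yields $L=L^{\max}$. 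This avoids the approximating sequence and the duality step entirely, at the cost of invoking the machinery of Section~\ref{section;SC}; in fact you could streamline it further by applying the implication (i)$\Rightarrow$(iv) of Theorem~\ref{thm;SC} directly to $u=(L+1)^{-1}f$ (which lies in $D(L)\cap L^1$ with $Lu=f-u\in L^1$), skipping the detour through $L^{(1)}$ and Lemma~\ref{SC-one}. Like the paper's proof (see its subsequent remark), your argument uses only the estimate $(L+1)^{-1}f\le (L^{\max}+1)^{-1}f$ and not regularity, so it too extends to arbitrary pairs of forms satisfying (S) and (MP).
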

\begin{proof} We show $(L+1)^{-1} = (L^{\max}+1)^{-1}$. Let $(e_n)$ be a
sequence in $D(Q)\cap C_c(X)$ such that $0\leq e_n \leq 1$ and $e_n
\uparrow 1$ $m$-almost everywhere. By the maximum principle (MP),
Proposition~\ref{MP regular}, and stochastic completeness, we obtain
$$1 = (L+1)^{-1}1 = \lim_{n\to \infty} (L+1)^{-1}e_n  \leq \lim_{n\to \infty}
(L^{\max}+1)^{-1}e_n = (L^{\max}+1)^{-1}1\leq 1.$$
This shows $(L^{\max}+1)^{-1}1 = 1$. Now, let $0 \leq f \in L^1(X,m) \cap
L^2(X,m)$ be given. Since  $(L^{\max}+1)^{-1}f - (L+1)^{-1}f \in
L^1(X,m)\cap L^2(X,m),$ we obtain by Lebesgue's theorem
\begin{align*}
0&\leq  \as{(L^{\max}+1)^{-1}f - (L+1)^{-1}f,1} \\
&\leq  \lim_{n\to \infty} \as{(L^{\max}+1)^{-1}f - (L+1)^{-1}f, e_n} \\
&= \lim_{n\to \infty} \as{f, (L^{\max}+1)^{-1}e_n - (L+1)^{-1}e_n}\\
&= 0.
\end{align*}
This shows $(L+1)^{-1} = (L^{\max}+1)^{-1}$ and our claim follows.
\end{proof}

\begin{remark}
The previous proposition is known see, e.g.,
\cite[Theorem~6.3]{Kuw3}. Note however, that our proof only uses the
estimate $(L+1)^{-1}f \leq (L^{\max}+1)^{-1}f$ and did not rely on
the regularity of $Q$. Thus, it  also holds for pairs of forms
satisfying the maximum principle (MP) (c.f.
Section~\ref{section;Extensions}).
\end{remark}

With this at hand our main theorem on stochastic completeness reads as follows.
\begin{theorem}[Characterization stochastic completeness for regular forms] \label{SC-Reg}
Let $Q$ be a regular Dirichlet form. Then the following assertions are
equivalent.
\begin{itemize}
\item[(i)] $Q$ is stochastically complete.
\item[(ii)] For all $u \in D(\cL)\cap L^1(X,m) \cap L^2(X,m)$ such that $\cL u
\in L^1(X,m) \cap L^2(X,m)$ the following equality holds
$$\int_X \cL u \, dm = 0.$$
\end{itemize}
\end{theorem}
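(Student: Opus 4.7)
My plan is to reduce the statement to the already established Theorem \ref{thm;SC} (the characterization of stochastic completeness via the $L^2$-generator $L$) by identifying, under the assumption of stochastic completeness, the operator $\cL$ on the appropriate subdomain with $L$ itself. The two ingredients are Proposition \ref{SCimpliesREG}, which gives that stochastic completeness forces $Q = Q^{\max}$, and Theorem \ref{thm;US1}, which in the case $Q = Q^{\max}$ identifies $D(L)$ with $\{u \in D(\cL) \cap L^2(X,m) \mid \cL u \in L^2(X,m)\}$.

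For (i) $\Rightarrow$ (ii), I would argue as follows. Assume $Q$ is stochastically complete. By Proposition \ref{SCimpliesREG} this yields $Q = Q^{\max}$, and hence by Theorem \ref{thm;US1} the equality $D(L) = \{u \in D(\cL) \cap L^2(X,m) \mid \cL u \in L^2(X,m)\}$ holds, with $Lu = \cL u$ on this domain. Now take any $u \in D(\cL) \cap L^1(X,m) \cap L^2(X,m)$ with $\cL u \in L^1(X,m) \cap L^2(X,m)$. Then $u \in D(L)$ with $Lu = \cL u \in L^1(X,m)$, so the implication (i) $\Rightarrow$ (iv) of Theorem \ref{thm;SC} gives
\[
\int_X \cL u \, dm \;=\; \int_X L u \, dm \;=\; 0.
\]

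For (ii) $\Rightarrow$ (i), I would use the implication (iv) $\Rightarrow$ (i) of Theorem \ref{thm;SC}: it suffices to verify $\int_X Lu\, dm = 0$ for every $u \in D(L) \cap L^1(X,m)$ with $Lu \in L^1(X,m)$. Given such $u$, we have $u \in D(Q) \subseteq L^2(X,m)$ and $Lu \in L^2(X,m)$, hence $u \in L^1(X,m) \cap L^2(X,m)$ and $Lu \in L^1(X,m) \cap L^2(X,m)$. By Proposition \ref{Domain L regular} the operator $\cL$ extends $L$, so $u \in D(\cL)$ and $\cL u = Lu$. Thus $u$ satisfies the hypotheses of (ii), and the desired vanishing $\int_X Lu\, dm = \int_X \cL u\, dm = 0$ follows directly.

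No step here poses a serious obstacle: the entire proof is a bookkeeping exercise reconciling $\cL$ with $L$ on the relevant subdomains. The only nontrivial ingredient is Proposition \ref{SCimpliesREG} (already proved) together with the characterization of $D(L)$ in Theorem \ref{thm;US1} (already proved); these two facts allow the abstract Theorem \ref{thm;SC} to be transported to the $\cL$-level. No new estimates or approximation arguments are required.
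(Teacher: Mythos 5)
Your proposal is correct and follows exactly the paper's own argument: stochastic completeness gives $Q=Q^{\max}$ via Proposition \ref{SCimpliesREG}, Theorem \ref{thm;US1} then identifies $D(L)$ with $\{u\in D(\cL)\cap L^2(X,m)\mid \cL u\in L^2(X,m)\}$, and both directions reduce to Theorem \ref{thm;SC}, with the converse using only that $\cL$ extends $L$ (Proposition \ref{Domain L regular}). No discrepancies to report.
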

\begin{proof}
(i) $\Longrightarrow$ (ii): Since $Q$ is stochastically complete, $Q = Q^{\max}$
holds by the previous proposition. Then, Theorem~\ref{thm;US1} shows $D(L) =
\{D(\cL) \cap L^2(X,m)\mid\cL u \in L^2(X,m) \}$. Therefore, Theorem~\ref{thm;SC} implies the statement.

(ii) $\Longrightarrow$ (i): This is an immediate consequence of Theorem~\ref{thm;SC} since $D(L) \subseteq  \{D(\cL) \cap L^2(X,m)\mid\cL u \in
L^2(X,m) \}$ is  satisfied by Proposition~\ref{Domain L regular}.
\end{proof}

%%%%%%%%%%%%%%%%%%%%%%%%%%%%%%%%%%%%%
\subsection{Recurrence}\label{subsection;recurrence}

We  improve the results on recurrence in the regular setting. For this we
need  that the action of $\ow{Q}$ on the space of functions of finite energy is compatible with the action of $Q$ on its extended Dirichlet space.

\begin{lemma} The inclusion $D(Q)_e \subseteq \widetilde{D}(Q)$ holds and the
extension of $Q$ to  $D(Q)_e$ equals $\ow{Q}$ on
$D(Q)_e$. Furthermore, if $Q$ is recurrent,  then $D(Q)_e = \widetilde{D}(Q)$.
\end{lemma}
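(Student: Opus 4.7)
The plan is to prove the two assertions in sequence. For the inclusion $D(Q)_e \subseteq \widetilde{D}(Q)$, I would take $u \in D(Q)_e$ with an approximating sequence $(f_n) \subseteq D(Q)$, so that $f_n \to u$ $m$-a.e. and $Q(f_n - f_m) \to 0$. Each $f_n$ lies in $D(Q) \subseteq D(Q)_{\rm loc}$ with $Q(f_n) < \infty$, so by the remark after the definition of $\widetilde{D}(Q)$ we have $\widetilde{Q}(f_n) = Q(f_n)$. Applying Fatou's lemma for $\widetilde{Q}$ (Theorem \ref{Fatou}) gives
$$\widetilde{Q}(u) \leq \liminf_n \widetilde{Q}(f_n) = \lim_n Q(f_n) = Q(u) < \infty,$$
hence $u \in \widetilde{D}(Q)$. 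For the equality $\widetilde{Q}(u) = Q(u)$, I would use that both forms admit the same Beurling-Deny representation on $u$: for the extended form $Q$ on $D(Q)_e$ this is \cite[Theorem 4.3.11]{CF}, and for $\widetilde{Q}$ it follows from the definition $\widetilde{Q}(u) = \lim_n Q(u^{(n)})$ together with Beurling-Deny applied to each $u^{(n)} \in D(Q)_{\rm loc}$ and monotone convergence in each of the three pieces.

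For the reverse inclusion under recurrence, I would invoke Theorem \ref{thm;recurrence} and \cite[Theorem 2.1.8]{CF} to obtain $1 \in D(Q)_e$ together with a sequence $(e_n)$ in $D(Q)\cap C_c(X)$ (using regularity) with $0\leq e_n\leq 1$, $e_n \to 1$ $m$-a.e., and $Q(e_n)\to 0$. Given $u \in \widetilde{D}(Q)$, I would first reduce to the bounded case: by splitting into positive and negative parts and then considering the truncations $u^{(k)}$, it suffices to show $u^{(k)} \in D(Q)_e$ for every $k$, since $(u^{(k)})_k$ will then form a $Q$-Cauchy sequence in $D(Q)_e$ (its $Q$-energies are monotone and bounded by $\widetilde{Q}(u)$) converging $m$-a.e.\ to $u$.

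For bounded $u \in \widetilde{D}(Q) \cap L^\infty(X,m)$, the candidate approximating sequence is $u_n := u\, e_n$. Each $u_n$ is bounded and compactly supported, and since $u\in D(Q)_{\rm loc}$ one can replace $u$ by some $v \in D(Q)$ on a neighborhood of $\supp e_n$; the algebra property of $D(Q)\cap L^\infty$ \cite[Theorem 1.4.2]{FOT} then gives $u_n = v e_n \in D(Q)$. Clearly $u_n \to u$ $m$-a.e. To verify that $(u_n)$ is $Q$-Cauchy, I would decompose $Q(u_n - u_m) = Q(u(e_n - e_m))$ via Beurling-Deny. The killing part is handled directly by $\|u\|_\infty^2 \int (e_n-e_m)^2\,dk \leq \|u\|_\infty^2 Q(e_n-e_m)\to 0$; the strongly local part is controlled through the Leibniz rule for $\mu^{(c)}$ combined with dominated convergence using $\int d\mu^{(c)}(u)<\infty$ and $Q(e_n-e_m)\to 0$; the jump part is split analogously through the identity $u(x)(e_n-e_m)(x)-u(y)(e_n-e_m)(y) = \tfrac{u(x)+u(y)}{2}((e_n-e_m)(x)-(e_n-e_m)(y)) + \tfrac{(e_n-e_m)(x)+(e_n-e_m)(y)}{2}(u(x)-u(y))$, again with dominated convergence.

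The principal difficulty is the Cauchy estimate for $(u_n)$: each of the three pieces of the Beurling-Deny decomposition must be bounded by a combination of $\|u\|_\infty$, the energies of $u$ and $e_n-e_m$, and a cross term controlled by Cauchy-Schwarz, so that the passage to the limit $n,m \to \infty$ is justified. The strongly local and jump pieces in particular require the Leibniz-type identities and careful handling of the quasi-continuous representatives. Once this Cauchy property is established, $u \in D(Q)_e$ follows by definition, and combined with the first part we conclude $D(Q)_e = \widetilde{D}(Q)$.
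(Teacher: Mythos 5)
Your first half is fine and essentially matches the paper for the inclusion: Fatou's lemma (Theorem~\ref{Fatou}) applied to an approximating sequence gives $u\in\widetilde{D}(Q)$. For the equality of the two forms on $D(Q)_e$ your detour through a Beurling--Deny representation of the extended Dirichlet space (plus monotone convergence of the three pieces for the truncations) is much heavier machinery than necessary and leans on an external result you would still have to match piece by piece; the paper gets the equality in one line, using that $\widetilde{Q}$ is a quadratic form (so the triangle inequality for $\widetilde{Q}^{1/2}$ is available) together with Fatou applied to $u-u_n$: $\widetilde{Q}(u-u_n)\le\liminf_m Q(u_m-u_n)\to 0$, hence $Q(u_n)\to\widetilde{Q}(u)$, which is by definition the extended form.

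In the recurrent direction there are two genuine gaps. First, your reduction to bounded functions rests on the claim that $(u^{(k)})_k$ is $Q$-Cauchy \emph{because} its energies are monotone and bounded; that implication is false in general (in a Hilbert space take $v_k=e_1+(1-\tfrac1k)e_k$: norms increase and stay bounded, but the sequence is not Cauchy). For truncations the Cauchy property does hold, but it needs its own argument (chain rule for $\mu^{(c)}$ and dominated convergence in the jump and killing parts, or an appeal to a Fatou/completeness property of $D(Q)_e$), not monotonicity alone. Second, the heart of your bounded case --- $Q\bigl(u(e_n-e_m)\bigr)\to 0$ --- is precisely what you leave open, and the dominated-convergence steps you sketch require $\tilde e_n\to 1$ \emph{quasi-everywhere}, not merely $m$-a.e.: the measures $J$, $k$ and $\mu^{(c)}(u)$ may be singular with respect to $m$, so $m$-a.e.\ convergence of $e_n$ gives no control over terms such as $\int \tilde w^2\,d\mu^{(c)}(u)$ or $\iint(\tilde u(x)-\tilde u(y))^2(\tilde w(x)+\tilde w(y))^2\,J(dx,dy)$ with $w=e_n-e_m$. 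The upgrade is possible --- since $Q(e_n)\to 0$ the sequence is $Q$-Cauchy and approximates $1\in D(Q)_e$, so a subsequence of quasi-continuous versions converges q.e.\ --- but it is a missing idea in your write-up, not a routine detail. Note that the paper sidesteps both problems by cutting off with $u\wedge e_n$ instead of $u\,e_n$: this function is automatically bounded by $1$ and compactly supported, hence lies in $D(Q)$ by Proposition~\ref{some properties}~(c), its energy is controlled by $Q(u\wedge e_n)^{1/2}\le Q(u)^{1/2}+Q(e_n)^{1/2}$ from part (d), and the Banach--Saks theorem applied to this bounded sequence yields Ces\`aro means that are $Q$-Cauchy and converge a.e.\ to $u$ --- no Leibniz rule, no product estimates, and no quasi-everywhere convergence are needed.
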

\begin{proof}
Recall the definition of the extension of $Q$ to $D(Q)_e$ in the
beginning of Section~\ref{section;recurrence}.  Let $u \in D(Q)_e$ be given  and let $(u_n)\subseteq D(Q)$ be an approximating
sequence for $u$. Then, Theorem~\ref{Fatou} shows $u \in
\widetilde{D}(Q)$ and
$$|\ow{Q}(u)^{1/2}-\ow{Q}(u_n)^{1/2}| \leq \ow{Q}(u-u_n)^{1/2} \leq \liminf_{m\to
\infty}\ow{Q}(u_m-u_n)^{1/2} = \liminf_{m\to
\infty}Q(u_m-u_n)^{1/2}.$$
This shows $Q(u_n) \to \ow{Q}(u)$ which was the first claim.

Now, assume $Q$ is recurrent.  Then there exists a sequence $e_n \in D(Q)\cap
C_c(X)$ such that $0\leq e_n \leq 1$, $e_n \to 1$ $m$-a.e.\ and $Q(e_n) \to 0$
(see Appendix for details).  Let $u \in \widetilde{D}(Q)$ be given. Without
loss of generality we may assume that $0 \leq u \leq 1$ (else approximate,
rescale, split in positive and negative part). The function $u \wedge e_n$ has compact support and it follows from the definition of
$\widetilde{D}(Q)$ that it belongs to $ D(Q)_{\rm loc} \cap L^\infty(X,m)$. Therefore, Proposition~\ref{some properties}~(c) shows $u \wedge e_n \in D(Q).$   Since Proposition~\ref{some properties}~(d) implies
$$Q(u \wedge e_n)^{1/2} \leq Q(u)^{1/2}  + Q(e_n)^{1/2},$$
the sequence $(u\wedge e_n)$ is bounded with respect to the inner
product space $(Q,D(Q))$. Hence, by some version of the
Banach-Saks Theorem, it has a subsequence $u\wedge e_{n_k}$ such
that  $v_N = \frac{1}{N}\sum_{k=1}^N u\wedge e_{n_k}$ is a
$Q$-Cauchy sequence. Since $u\wedge e_n \to u$ $m$-a.e., we also
obtain $v_N \to u$ $m$-a.e., and, therefore, $u \in D(Q)_e$.
\end{proof}

With this at hand our main result on recurrence reads as follows.
\begin{theorem}[Characterization recurrence for regular forms]  \label{recurrence,regular}
Let $Q$ be an irreducible regular Dirichlet form. Then the following
assertions are equivalent.
\begin{itemize}
\item[(i)] $Q$ is recurrent.
\item[(ii)] For all $u \in D(\cL)$ such that $\cL u \in L^1(X,m)$ the following equality holds
$$\int_X \cL u\, dm = 0 .$$

\item[(iii)] The killing measure $k$ vanishes and for all $u\in D(\cL )$, such
that $\cL u \in L^1(X,m)$ and for all $v \in \widetilde{D}(Q) \cap
L^\infty(X,m)$ the following equality holds
$$\ow{Q}(u,v) = \int_X \cL uv \,dm. $$

\end{itemize}
\end{theorem}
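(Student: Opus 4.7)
My plan is to derive the three equivalences from the general recurrence theorem (Theorem~\ref{thm;recurrence}) combined with the compatibility between $D(Q)_e$ and $\ow{D}(Q)$ proved in the preceding lemma. I would organize the argument along the routes (i)$\Leftrightarrow$(ii), (i)$\Rightarrow$(iii), and (iii)$\Rightarrow$(ii).

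First, for (i)$\Rightarrow$(ii): by \cite[Theorem~2.1.8]{CF} recurrence yields a sequence $(e_n)\subset D(Q)\cap C_c(X)$ with $0\le e_n\le 1$, $e_n\uparrow 1$ $m$-a.e., and $Q(e_n)\to 0$. For $u\in D(\cL)$ with $\cL u\in L^1$ the defining identity of $\cL$ applied to $e_n$ reads $\ow{Q}(u,e_n)=\as{\cL u,e_n}$. Cauchy--Schwarz for the semidefinite form $\ow{Q}$ bounds the left-hand side by $\ow{Q}(u)^{1/2}Q(e_n)^{1/2}\to 0$, while dominated convergence pushes the right-hand side to $\int_X \cL u\, dm$, so (ii) follows. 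For (ii)$\Rightarrow$(i) I would observe that, by regularity of $Q$ and the definition of $L_e$, one has $D(L_e)\subseteq D(\cL)$ with $\cL u=L_e u$ on $D(L_e)$; hence (ii) forces the hypothesis of Theorem~\ref{thm;recurrence}(ii), and irreducibility yields recurrence.

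The implication (iii)$\Rightarrow$(ii) is immediate once one knows that $1\in\ow{D}(Q)\cap L^\infty$ with $\ow{Q}(1)=0$: the Beurling--Deny decomposition collapses to $\ow{Q}(1)=k(X)$ because the strongly local measure of a constant vanishes and the jump integrand vanishes identically, so $k=0$ gives $\ow{Q}(1)=0$, and then Cauchy--Schwarz applied to (iii) with $v=1$ yields $\int_X \cL u\, dm=\ow{Q}(u,1)=0$. The substantive step is (i)$\Rightarrow$(iii). Recurrence forces $1\in D(Q)_e$ with extended energy $Q(1)=0$, and the Beurling--Deny computation just mentioned then produces $k=0$. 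For the Green formula itself, fix $u\in D(\cL)$ with $\cL u\in L^1$ and $v\in\ow{D}(Q)\cap L^\infty$; by the preceding lemma $v\in D(Q)_e$. I would choose a $Q$-Cauchy approximating sequence $(v_k)\subset D(Q)$ with $v_k\to v$ $m$-a.e.\ and, after applying the normal contraction $(\,\cdot\,\wedge\|v\|_\infty)\vee(-\|v\|_\infty)$, arrange $|v_k|\le\|v\|_\infty$. Regularity of $Q$ then permits approximation of each $v_k$ in $\aV{\cdot}_Q$ by a sequence $(w_k^j)_j\subset D(Q)\cap C_c(X)$ with $|w_k^j|\le\|v\|_\infty$ (apply a further contraction) and $w_k^j\to v_k$ $m$-a.e.\ along a subsequence. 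Inserting $w_k^j$ in the defining identity of $\cL$ and letting $j\to\infty$, Cauchy--Schwarz for $\ow{Q}$ handles the left-hand side while dominated convergence (using $\cL u\in L^1$) handles the right-hand side, giving $\ow{Q}(u,v_k)=\as{\cL u,v_k}$. A second limit $k\to\infty$, again by Cauchy--Schwarz for $\ow{Q}$ (the $Q$-Cauchy property of $(v_k)$ forces $\ow{Q}(v-v_k)\to 0$) and by dominated convergence on the right, produces the Green formula.

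The hard part will be the double approximation in (i)$\Rightarrow$(iii): the passage from $D(Q)\cap C_c(X)$ to $D(Q)$ and then from $D(Q)$ to $D(Q)_e\cap L^\infty$ requires simultaneous control of the sup-norm (so that dominated convergence handles the pairing against $\cL u\in L^1$) and of the energy norm (so that Cauchy--Schwarz controls $\ow{Q}$), and one has to check that the normal-contraction truncations used to preserve the sup-norm bound remain compatible with $Q$-Cauchyness and with pointwise convergence along a subsequence. Once this bookkeeping is carried out, the rest is a routine application of the results already established.
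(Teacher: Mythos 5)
Your proposal is correct and follows essentially the same route as the paper: (ii)$\Rightarrow$(i) via the lemma $D(Q)_e\subseteq\ow{D}(Q)$ (so that $L_e$ is a restriction of $\cL$) together with Theorem~\ref{thm;recurrence}; (i)$\Rightarrow$(iii) by noting $k=0$ and approximating $v\in\ow{D}(Q)\cap L^\infty(X,m)=D(Q)_e\cap L^\infty(X,m)$ by uniformly bounded, compactly supported elements of $D(Q)$, passing to the limit with Cauchy--Schwarz for $\ow{Q}$ on the left and dominated convergence against $\cL u\in L^1(X,m)$ on the right; and (iii)$\Rightarrow$(ii) by testing with $v=1$ and $\ow{Q}(1)=0$. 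The only deviations are cosmetic: your separate direct proof of (i)$\Rightarrow$(ii) is redundant given the cycle, and your two-step approximation (including the flagged truncation bookkeeping) is exactly the step the paper compresses into choosing the approximating sequence directly in $D(Q)\cap C_c(X)$, uniformly bounded by $\|v\|_\infty$.
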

\begin{proof}
(ii) $\Longrightarrow$ (i): This immediately follows from Theorem~\ref{thm;recurrence} and the previous lemma.

(i) $\Longrightarrow$ (iii): The form $Q$ is recurrent, therefore, the killing measure $k$
vanishes. Let $v \in \ow{D}(Q)\cap L^\infty(X,m)$ be given. Since $Q$ is recurrent,
the previous Lemma shows $v \in D(Q)_e$, hence $v$ possesses an approximating
sequence $(v_n)$ in  $D(Q)$. By regularity we may choose $(v_n)$ in  $D(Q)\cap
C_c(X)$. Furthermore, we may assume that the $v_n$ are uniformly bounded by
$\|v\|_\infty$. Using Lebesgue's  theorem, we then obtain for $u\in D(\cL)$ with $\cL u  \in L^1$
\begin{align*}
\ow{Q}(u,v)  = \lim_{n\to \infty}\ow{Q}(u,v_n) = \lim_{n\to \infty}\int_X \cL uv_n \,dm= \int_X \cL uv \,dm.
\end{align*}
(iii) $\Longrightarrow$ (ii): Assume (ii) does not hold. Then, there exists $u\in D(\cL)$ with $\cL u \in L^1$ such that
$$\int_X \cL u\, dm \neq  0.$$
Since $k$ vanishes, we obtain $1 \in \ow{D}(Q)$ and $\ow{Q} (1,u) = 0$. This contradicts (iii).
\end{proof}

\subsection{The relation between the concepts} As an application of the above
criteria we finish this section by discussing the relation of the various
concepts.

\begin{theorem}[Relation between the concepts]  \label{regular,connection}
Suppose $Q$ is a regular  irreducible Dirichlet form. Consider the
following statements.
\begin{itemize}
\item[(i)] $Q$ is recurrent.
\item[(ii)] $Q$ is stochastically complete.
\item[(iii)] $Q = Q^{\max}$.
\end{itemize}
Then, the implications $\mathrm{(i)}$ $\Longrightarrow$
$\mathrm{(ii)}$ and $\mathrm{(ii)}$ $\Longrightarrow$
$\mathrm{(iii)}$ are always true. If the killing measure $k$
vanishes and $m(X) < \infty$ the above assertions are equivalent.
\end{theorem}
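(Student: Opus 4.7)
The plan is to prove the two ``always true'' implications by invoking the characterizations already established in earlier sections, and then to close the loop under the extra assumptions $k=0$ and $m(X)<\infty$ by exhibiting the constant function $1$ as a witness of recurrence.

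For $\mathrm{(i)}\Longrightarrow\mathrm{(ii)}$, assume $Q$ is recurrent. By Theorem~\ref{recurrence,regular}, every $u\in D(\cL)$ with $\cL u\in L^{1}(X,m)$ satisfies $\int_{X}\cL u\,dm=0$. This is a stronger statement than condition~(ii) of Theorem~\ref{SC-Reg}, which only demands the vanishing for $u\in D(\cL)\cap L^{1}\cap L^{2}$ with $\cL u\in L^{1}\cap L^{2}$. Hence Theorem~\ref{SC-Reg} gives stochastic completeness of $Q$.

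For $\mathrm{(ii)}\Longrightarrow\mathrm{(iii)}$, this is precisely the content of Proposition~\ref{SCimpliesREG}, which was proved using only the maximum principle (Proposition~\ref{MP regular}) and stochastic completeness; no new argument is required.

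Finally, assume in addition that $k=0$ and $m(X)<\infty$, and suppose $\mathrm{(iii)}$ holds. We argue that the constant function $1$ lies in the domain of $L_{e}$ with $L_{e}1=0$, which gives recurrence by Theorem~\ref{thm;recurrence}. Since $m(X)<\infty$ the constant $1$ belongs to $L^{2}(X,m)$, and the Beurling--Deny representation shows $\ow{Q}(1)=0$ (the strongly local and jump terms annihilate constants, and $k=0$ kills the killing term), so $1\in \ow{D}(Q)\cap L^{2}(X,m)=D(Q^{\max})$. The assumption $Q=Q^{\max}$ then forces $1\in D(Q)$ with $Q(1)=0$. By the Cauchy--Schwarz inequality for the non-negative bilinear form $Q$, we get $|Q(1,u)|\leq Q(1)^{1/2}Q(u)^{1/2}=0$ for every $u\in D(Q)$. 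Consequently $1\in D(Q)\subseteq D(Q)_{e}$ with the defining property $\langle 0,u\rangle=Q(1,u)$ for every $u\in D(Q)$, so $1\in D(L_{e})$ and $L_{e}1=0$. Theorem~\ref{thm;recurrence} now yields recurrence of $Q$.

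The only nontrivial point is the last implication; the main obstacle there is simply to recognize that under $k=0$ and $m(X)<\infty$ the constant $1$ is an $L^{2}$ element of $\ow{D}(Q)$ with vanishing energy, after which the Cauchy--Schwarz trick for $Q$ hands us $L_{e}1=0$ and Theorem~\ref{thm;recurrence} closes the loop.
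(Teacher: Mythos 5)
Your proposal is correct and follows essentially the same route as the paper: the first two implications are exactly the paper's citations of Theorem~\ref{recurrence,regular} together with Theorem~\ref{SC-Reg} and of Proposition~\ref{SCimpliesREG}, and for (iii) $\Longrightarrow$ (i) the paper likewise uses that $m(X)<\infty$ and $k=0$ give $1\in D(Q^{\max})$ with $Q^{\max}(1)=0$. You merely spell out the final step the paper leaves implicit, deducing $L_e 1=0$ via Cauchy--Schwarz and invoking Theorem~\ref{thm;recurrence}, which is a correct completion of the same argument.
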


\begin{proof}
The implication  (i) $\Longrightarrow$ (ii) immediately follows from Theorem~\ref{recurrence,regular} and Theorem~\ref{SC-Reg}. The implication (ii) $\Longrightarrow$ (iii) is
the statement of Proposition~\ref{SCimpliesREG}. Suppose now $m(X) < \infty$ and
$k = 0$. It remains to show the implication (iii) $\Longrightarrow$ (i). This follows from the fact
that $m(X) < \infty $ implies $1 \in D(Q^{\rm max})$ and $k = 0$ implies $Q^{\rm max}(1) = 0$.
\end{proof}

\begin{remark}
The implications (i) $\Longrightarrow$ (ii) and (ii) $\Longrightarrow$ (iii) are
well-known. However, the statement on the equivalence in case of finite measure seems to be interesting. See the proof of Theorem~\ref{parabolicgraphvar} for
an application.
\end{remark}

%%%%%%%%%%%%%%%%%%%%%%%%%%%%%%%%%%%%%%%%%%%%%%%%%%%%%%%%%%%%%%%%%%%%%%%%%%%%%%%%
%%

\section{Application to graphs}\label{section;Graph}
In this section we apply the results obtained above to graphs. Here, we use the
framework of regular Dirichlet forms on graphs discussed in various recent
works.  After a discussion of the background, we specify the space of the functions of finite energy. Then, we turn to extensions, stochastic
completeness and to recurrence,

The salient feature of our discussion is that  the operator $\cL $
associated to a regular Dirichlet form in the previous section is known
explicitly in the graph case. This makes it possible to present unified formulations of the results.

Unbounded Laplacians on graphs have become a focus of research  in
various recent works, see e.g., \cite{CdVTHT,Hua,KL,HKLW,Woj,Woj2},
and references therein.

The subsequent discussion of the setting essentially follows \cite{KL} (see
\cite{HK,HKLW} as well) to which we refer for further details and proofs.   Let
$V$ be a countable set and $C(V)$ be the set of all real-valued functions on $V$. For a measure $m:V\to(0,\infty)$ let
$\ell^2(V,m)=\{u:V\to\RR\mid \sum_{x\in V}|u(x)|^2m(x)<\infty\}$
and denote the corresponding scalar product by $\as{\cdot,\cdot }$ and the
corresponding norm by $\aV{\cdot}$.

Let $b:V\times V\to[0,\infty)$ be symmetric with zero diagonal and
assume $\sum_{y\in V} b(x,y)<\infty$ for all $x\in V$. Furthermore,
let $c:V\to[0,\infty)$. We then call  $(b,c)$  a \textit{weighted
graph over $V$} and refer to $V$ as  the \textit{vertex set}.
Moreover,   $x,y\in V$ are \textit{connected by an edge  with weight
$b(x,y)$} whenever $b(x,y)>0$. In this case, we write $x\sim y$.
Furthermore, $c$ encodes one-way-edges from $x$ whenever $c(x)>0$.

We say a set $W\subseteq V$ is \emph{connected} if for all $x,y\in W$ there exists a finite sequence of vertices  $x=x_0,\ldots,x_n=y$ in $W$ such
that $x_j\sim x_{j+1}$, $j=0,\ldots,n-1$. We call such a sequence of vertices a
\emph{path} from $x$ to $y$.

Let $\ow{Q}_{b,c}:C(V) \to[0,\infty]$ be given by
$$\ow{Q}_{b,c}(u)=\frac{1}{2}\sum_{x,y\in V} b(x,y)(u(x)-u(y))^2+\sum_{x\in V}
c(x)u(x)^2.$$
We are interested in the space
$$\widetilde{D}=\{u\in C(V)\mid \ow{Q}_{b,c}(u)<\infty\}.$$
By the summability assumption on $b$ the inclusion $C_c(V)\subseteq
\widetilde{D}$ follows easily, where  $C_c(V)$ is the space of
finitely supported functions. By polarization $\ow{Q}_{b,c}$
extends to a symmetric bilinear form on $\widetilde{D}\times
\widetilde{D}$. This bilinear form will again be denoted by
$\ow{Q}_{b,c}$.

There is a regular Dirichlet form associated with $(b,c)$ introduced
next. Let $Q$ be the restriction of $\ow{Q}_{b,c}$ to
$$D(Q)=\ov{C_c(V)}^{\aV{\cdot}_{Q}},$$
where $\aV{\cdot}_{Q}^2=\aV{\cdot}^2+\ow{Q}_{b,c}(\cdot).$

By Fatou's lemma $\ow{Q}_{b,c}$ is lower semi-continuous and, hence, every restriction is closable. Thus, the form $Q$ is closed by definition of $D(Q)$. Moreover,  $C_c(V)\subseteq D(Q)$ implies that $Q$ is regular, i.e., $D(Q)\cap
C_c(V)$ is dense in $C_c(V)$  with respect to the supremum norm $\aV{\cdot}_{\infty}$ and $D(Q)$ with respect to $\aV{\cdot}_{Q}$. One can check that $(Q,D(Q))$ is a Dirichlet form (see
\cite[Theorem~3.1.1]{FOT}),
which we  call the {\em regular Dirichlet form associated to $(b,c)$}.

\subsection{Functions of finite energy}
In view of the theory developed in Chapter~\ref{section;Regular}, we
aim at  determining  the associated space of functions of finite
energy $\ow{D}(Q)$ and the maximal form $D(Q^{\rm max})$.

\begin{prop} \label{functions of finite energy discrete graph}
Let $Q$ be the regular Dirichlet form on $\ell^2(V,m)$ associated to
the graph $(b,c)$. Then, $\ow{D}(Q) = \ow{D}$ and $\ow{Q}$ on $\ow{D}(Q)$ is given by $\ow{Q}_{b,c}$. Furthermore, $Q^{\rm max}$
is the  restriction of $\ow{Q}_{b,c}$ to the domain $D(Q^{\rm max}) =
\ow{D} \cap \ell^2(V,m)$.
\end{prop}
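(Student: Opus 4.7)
The plan is to reduce the abstract construction of Subsection~\ref{subsection;FFE} to the explicit pointwise formula $\ow{Q}_{b,c}$ by fully exploiting the discreteness of $V$.

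I will first arrange the topological and functional setup. Endowing $V$ with the discrete topology, the relatively compact open sets are exactly the finite subsets of $V$, and the capacity of any nonempty set is strictly positive (because the indicator $1_{\{x\}}$ is admissible and any competitor $v$ satisfies $\aV{v}_{Q}^{2}\geq \aV{v}^{2}\geq m(x)$); hence capacity-zero sets are empty and every function is its own quasi-continuous representative. Moreover, for any $u\in C(V)$ and any finite $G\subseteq V$, the function $u\cdot 1_{G}$ lies in $C_c(V)\subseteq D(Q)$, so $C(V)\subseteq D(Q)_{\rm loc}$; in particular every $u\in C(V)$ belongs to $D(Q)_{\rm loc}^{\infty}$.

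Next I will identify the Beurling-Deny pieces of $Q$. Since $Q$ is the closure of $\ow{Q}_{b,c}\vert_{C_c(V)}$, which is a purely jump-plus-killing quadratic form, the strongly local measure $\muc$ vanishes, the jump measure $J$ is determined by $J(\{x\}\times\{y\})=\tfrac{1}{2}b(x,y)$, and the killing measure $k$ is given by $k(\{x\})=c(x)$ (cf. \cite{KL,HKLW}). Consequently, the extension of the diagonal of $Q$ from $D(Q)$ to $D(Q)_{\rm loc}$ described in Subsection~\ref{subsection;FFE} acts exactly by the pointwise formula $\ow{Q}_{b,c}$ wherever the latter is finite.

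Combining these observations with the definition $\ow{Q}(u)=\lim_{n\to\infty}Q(u^{(n)})$, one has $Q(u^{(n)})=\ow{Q}_{b,c}(u^{(n)})$ for every $u\in C(V)$ and every $n\geq 1$. Since $u^{(n)}(x)\to u(x)$ pointwise and the quantities $|u^{(n)}(x)-u^{(n)}(y)|^{2}$ and $u^{(n)}(x)^{2}$ are monotone non-decreasing in $n$ (the truncation $t\mapsto(t\wedge n)\vee(-n)$ being a normal contraction whose absolute increments on any fixed pair of arguments grow with $n$), the monotone convergence theorem yields $\ow{Q}_{b,c}(u^{(n)})\uparrow\ow{Q}_{b,c}(u)$. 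Hence $\ow{Q}=\ow{Q}_{b,c}$ on $C(V)$ and therefore $\widetilde{D}(Q)=\widetilde{D}$. The statement on $Q^{\rm max}$ follows at once from the definition $D(Q^{\rm max})=\widetilde{D}(Q)\cap\ell^{2}(V,m)$ and the just-established equality $\ow{Q}=\ow{Q}_{b,c}$.

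The main obstacle is the explicit identification of the Beurling-Deny pieces of $Q$, in particular the vanishing of the strongly local measure $\muc$. This is standard in the graph setting and I will invoke it from \cite{KL,HKLW}; alternatively, it can be verified directly by checking the formula on the dense subspace $C_c(V)\subseteq D(Q)$ and extending by the contraction and locality properties gathered in Proposition~\ref{some properties}.
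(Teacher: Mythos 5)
Your proof is correct and takes essentially the same route as the paper's one-line argument, which rests precisely on $D(Q)_{\rm loc}=C(V)$ together with monotone convergence of $Q(u^{(n)})=\ow{Q}_{b,c}(u^{(n)})$, the statement on $Q^{\max}$ being read off from the definitions. You merely make explicit the identification of the Beurling--Deny data ($\muc=0$, $J$ given by $\tfrac12 b$, $k$ by $c$), which the paper leaves implicit and which follows from uniqueness of the decomposition checked on $C_c(V)$.
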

\begin{proof}
The equality $\ow{D}(Q) = \ow{D}$ follows from   $D(Q)_{\rm loc} =
C(V)$ and the monotone convergence theorem. The rest is clear from
the definitions.
\end{proof}

Let us now turn to the associated operators. We  let
\begin{equation*}\label{ftilde}
\widetilde{F}:=\{ w : V\to \RR \mid \sum_{y\in V}
b(x,y)|w(y)|<\infty\;\mbox{for all  $x\in V$ } \}
\end{equation*}
and define $\widetilde{L} : \widetilde{F}\longrightarrow  C(V)$ via
$$\widetilde{L} w (x) :=\frac{1}{m(x)} \sum_{y\in V} b(x,y) (w(x) - w(y)) +
\frac{c(x)}{m(x)}  w(x). $$ Here, indeed the sum exists
 for each $x\in V$ due to  $w\in \widetilde{F}$. Then $L$, the associated operator of $Q$, is a restriction of
$\widetilde{L}$ with domain satisfying
$$D(L) \subset \{u\in \ell^2(V,m) \mid \widetilde{L}u \in \ell^2(V,m)\}.$$
As mentioned above, details can be found in \cite{KL}.
Here, we just briefly discuss the crucial link between the operator
$\widetilde{L}$ and the form $Q$.
This link is given by the following Green-type-formula.   Various variants
can be found in \cite{HK,KL,HKLW}.

\begin{lemma}[Green type formula] \label{Green}
(a)  The set $\widetilde{D}$ is contained in  $\widetilde{F}$.

(b) For all $w\in \widetilde{F}$ and $v\in C_c (V)$, the following equality holds
$$\widetilde{Q}(w,v) = \sum_x (\widetilde{L}  w) (x)  v(x) m(x) = \sum_x w (x)
(\widetilde{L} v) (x).$$
\end{lemma}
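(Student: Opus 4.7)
The plan is to prove the two parts in sequence, with part (a) being the routine integrability check needed to make the expressions in part (b) well-defined, and part (b) being the standard summation-by-parts argument for symmetric weighted graphs.

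For part (a), I would fix $x \in V$ and use the Cauchy--Schwarz inequality. Since $\widetilde{Q}_{b,c}(w) < \infty$, the partial sum $\sum_{y} b(x,y)(w(x)-w(y))^2$ is bounded by $2\widetilde{Q}_{b,c}(w)$, hence finite. Combined with the hypothesis $\sum_{y} b(x,y) < \infty$, Cauchy--Schwarz gives
\[
\sum_{y} b(x,y)\,|w(y)-w(x)| \leq \Bigl(\sum_y b(x,y)\Bigr)^{1/2}\Bigl(\sum_y b(x,y)(w(y)-w(x))^2\Bigr)^{1/2} < \infty.
\]
Adding $|w(x)|\sum_y b(x,y) < \infty$ and using the triangle inequality yields $\sum_y b(x,y)|w(y)| < \infty$, which is exactly $w \in \widetilde{F}$.

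For part (b), the core identity is the symmetry trick: using $b(x,y) = b(y,x)$ and renaming summation indices,
\[
\frac{1}{2}\sum_{x,y} b(x,y)\bigl(w(x)-w(y)\bigr)\bigl(v(x)-v(y)\bigr) = \sum_{x,y} b(x,y)\bigl(w(x)-w(y)\bigr) v(x).
\]
Granted this, adding the killing term $\sum_x c(x) w(x) v(x)$ and regrouping immediately produces $\sum_x (\widetilde{L}w)(x)\,v(x)\,m(x)$. The second equality $\sum_x (\widetilde{L}w)(x)v(x)m(x) = \sum_x w(x)(\widetilde{L}v)(x)m(x)$ then follows by the analogous expansion starting from $\sum_x w(x)(\widetilde{L}v)(x)m(x)$ and invoking the same symmetry of $b$.

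The genuine work is justifying the rearrangements of double sums, which is the main (though mild) obstacle. Here I would use the fact that $v \in C_c(V)$: writing $F = \mathrm{supp}\,v$, the factor $(v(x)-v(y))$ vanishes unless $x \in F$ or $y \in F$, so the relevant double sum reduces to a \emph{finite} outer sum over $F$ of series of the form $\sum_y b(x,y)|w(x)-w(y)|$, each of which is absolutely convergent by part (a) and the summability of $b(x,\cdot)$. This absolute convergence licenses all the reindexings and Fubini-type interchanges. The killing sum $\sum_x c(x)w(x)v(x)$ is literally a finite sum since $v$ has finite support, so no additional care is needed there.
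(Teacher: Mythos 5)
Your proof is correct, and it is genuinely more self-contained than the paper's: the paper does not argue at all, it simply cites \cite[Proposition~2.8]{HKLW} for (a) and \cite[Lemma~4.7]{HK} for (b), so your Cauchy--Schwarz estimate plus summation-by-parts is exactly the standard argument those references contain. Two small points of precision. First, in part (b) the absolute convergence of $\sum_y b(x,y)\lvert w(x)-w(y)\rvert$ for $x\in\supp v$ follows directly from the hypothesis $w\in\widetilde{F}$ together with $\sum_y b(x,y)<\infty$; you do not need to invoke part (a) there (part (a) is only needed to apply the lemma to $w\in\widetilde{D}$ later). Second, for the second equality $\sum_x (\widetilde{L}w)(x)v(x)m(x)=\sum_x w(x)(\widetilde{L}v)(x)m(x)$ the reduction to the support of $v$ is slightly different: besides the finitely many $x\in\supp v$, you must also control the terms with $x\notin\supp v$ and $y\in\supp v$, i.e. show $\sum_x b(x,y)\lvert w(x)\rvert\,\lvert v(y)\rvert<\infty$ for each $y\in\supp v$ --- which is again exactly $w\in\widetilde{F}$ combined with the symmetry $b(x,y)=b(y,x)$. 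With that one extra line the Fubini-type interchange for the second identity is fully justified. (Incidentally, your inclusion of the factor $m(x)$ in the last sum is the correct reading; the paper's display omits it, which is evidently a typo given the $1/m(x)$ in the definition of $\widetilde{L}$.)
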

\begin{proof} Statement (a) is part of \cite[Proposition~2.8]{HKLW} and statement (b) is contained in \cite[Lemma~4.7]{HK}.
\end{proof}

With this at hand we can identify the operator $\mathcal{L}$ for the
regular Dirichlet form associated to the graph $(b,c)$. Recall its
definition on Page \pageref{Uber Laplace}.

\begin{theorem}\label{prop;Graphs,L}
Let $Q$ be the regular Dirichlet form associated with the graph $(b,c)$. Then, $\cL$ is the restriction of $\ow{L}$ to the domain $D(\cL) = \ow{D}$.
\end{theorem}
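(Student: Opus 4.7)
The plan is to identify $\mathcal{L}$ with the natural restriction of $\widetilde{L}$ to $\widetilde{D}$ by directly verifying both inclusions of domains, relying on two facts that are already available: the identification $\widetilde{D}(Q) = \widetilde{D}$ from Proposition~\ref{functions of finite energy discrete graph} and the Green-type formula in Lemma~\ref{Green}. Since the discrete topology is intended on $V$, relatively compact open sets are finite, so $L^1_{\rm loc}(V,m) = C(V)$. In particular, for any $u \in \widetilde{D}$, Lemma~\ref{Green}(a) gives $u \in \widetilde{F}$ and consequently $\widetilde{L}u \in C(V) = L^1_{\rm loc}(V,m)$; no further integrability issues can arise on the right hand side.

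For the inclusion $\widetilde{D} \subseteq D(\mathcal{L})$ with $\mathcal{L}u = \widetilde{L}u$, I would let $u \in \widetilde{D}$ and note that $D(Q) \cap C_c(V) = C_c(V)$, since $C_c(V) \subseteq D(Q)$ by construction of the regular Dirichlet form. Lemma~\ref{Green}(b) then yields, for every $v \in C_c(V)$,
$$\widetilde{Q}(u,v) = \sum_{x \in V} (\widetilde{L}u)(x)\, v(x)\, m(x) = \langle \widetilde{L}u, v\rangle,$$
which is precisely the defining property of $\mathcal{L}$ on page~\pageref{Uber Laplace}. Hence $u \in D(\mathcal{L})$ and $\mathcal{L}u = \widetilde{L}u$.

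For the reverse inclusion $D(\mathcal{L}) \subseteq \widetilde{D}$ and the uniqueness of the identification, I would start from the definition: any $u \in D(\mathcal{L})$ belongs to $\widetilde{D}(Q)$, which equals $\widetilde{D}$ by Proposition~\ref{functions of finite energy discrete graph}. The previous step therefore applies and produces $\widetilde{Q}(u,v) = \langle \widetilde{L}u, v\rangle$ for every $v \in C_c(V)$. Comparing with $\widetilde{Q}(u,v) = \langle \mathcal{L}u, v\rangle$ and testing against the indicator functions $v = 1_{\{x\}} \in C_c(V)$, one obtains $\mathcal{L}u(x) = \widetilde{L}u(x)$ for every $x \in V$, completing the identification.

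There is no substantial obstacle here: the heavy lifting is already contained in Lemma~\ref{Green}, which packages both the summability needed to make $\widetilde{L}u$ pointwise meaningful on $\widetilde{D}$ and the integration-by-parts formula pairing $\widetilde{D}$ against $C_c(V)$. The only points requiring a moment of attention are the identification $D(Q)\cap C_c(V) = C_c(V)$ (immediate from the construction of $Q$), the observation that $L^1_{\rm loc}(V,m) = C(V)$ in the discrete setting, and using indicator functions of singletons as test functions to pin down $\mathcal{L}u$ pointwise.
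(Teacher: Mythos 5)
Your proposal is correct and follows essentially the same route as the paper, which simply cites Proposition~\ref{functions of finite energy discrete graph}, the Green-type formula of Lemma~\ref{Green}, and the definition of $\cL$; you have merely spelled out the details (the identifications $D(Q)\cap C_c(V)=C_c(V)$ and $L^1_{\rm loc}(V,m)=C(V)$, and testing against indicators of singletons), all of which are sound.
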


\begin{proof}
This is a consequence of Lemma \ref{functions of finite
energy discrete graph}, the previous lemma and the definition of
$\cL$.
\end{proof}

%%%%%%%%%%%%%%%%%%%%%%%%%%%%%%%%%%%%%%%%%%%%%%%%%%%%%%%%%%%%%%%%%%%%%%%%%%%%%%%%
%
\subsection{Extensions of Dirichlet forms} \label{section;Graphs;extensions}

The next result is a direct  application of Theorem~\ref{H neq H0}.

\begin{theorem}\label{thm;graph;H neq H0}
Let $Q$ be the regular Dirichlet form associated to $(b,c)$. Then the following assertions are
equivalent.
\begin{itemize}
\item[(i)] $Q \neq Q^{\max}$.
\item[(ii)] There exists a nontrivial $u \in D(Q^{\max})\cap \ell^1(V,m)\cap \ell^{\infty}(V)$
such that $\ow L u \in \ell^2(V,m)$ with $u\geq 0$, $ \ow Lu \leq 0$ and $ \ow L u \neq
0$.
\end{itemize}
\end{theorem}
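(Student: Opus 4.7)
The plan is to invoke Theorem~\ref{H neq H0} directly and translate its abstract hypotheses into the concrete graph data by means of the identifications established in Proposition~\ref{functions of finite energy discrete graph} and Theorem~\ref{prop;Graphs,L}. Those two results give $D(Q^{\max}) = \ow{D} \cap \ell^2(V,m)$ together with $D(\cL) = \ow{D}$ and $\cL u = \ow{L} u$ for all $u \in \ow{D}$; moreover, the $L^p$ spaces appearing in the abstract setup are literally the $\ell^p(V,m)$ spaces here.

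For the implication (i) $\Longrightarrow$ (ii), I would apply Theorem~\ref{H neq H0} to produce a nontrivial $u \in D(\cL) \cap \ell^1(V,m) \cap \ell^2(V,m)$ with $\cL u \in \ell^2(V,m)$, $u \geq 0$, $\cL u \leq 0$ and $\cL u \neq 0$, and then invoke the ``additionally bounded'' strengthening in that theorem to arrange $u \in \ell^\infty(V)$ as well. Translating through the identifications, this $u$ sits in $\ow{D} \cap \ell^2(V,m) \cap \ell^1(V,m) \cap \ell^\infty(V) = D(Q^{\max}) \cap \ell^1(V,m) \cap \ell^\infty(V)$, and it satisfies $\ow{L} u = \cL u \in \ell^2(V,m)$ together with the required sign conditions. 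This is exactly statement (ii).

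The converse (ii) $\Longrightarrow$ (i) is equally immediate: for $u$ as in (ii), the identification $D(Q^{\max}) = \ow{D} \cap \ell^2(V,m) = D(\cL) \cap \ell^2(V,m)$ shows $u \in D(\cL) \cap \ell^1(V,m) \cap \ell^2(V,m)$, and $\cL u = \ow{L} u \in \ell^2(V,m)$ with $u \geq 0$, $\cL u \leq 0$, $\cL u \neq 0$; hence Theorem~\ref{H neq H0} yields $Q \neq Q^{\max}$. Since the theorem is merely a concrete reformulation of the abstract result, I do not anticipate any substantial obstacle; all the non-trivial work has already been carried out in identifying $\cL$ with (a restriction of) $\ow{L}$ and in computing $D(Q^{\max})$ explicitly for the graph.
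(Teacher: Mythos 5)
Your proposal is correct and is exactly the route the paper takes: the paper states this result as a direct application of Theorem~\ref{H neq H0}, with the translation carried by Proposition~\ref{functions of finite energy discrete graph} ($D(Q^{\max})=\ow{D}\cap\ell^2(V,m)$) and Theorem~\ref{prop;Graphs,L} ($\cL=\ow{L}$ on $D(\cL)=\ow{D}$), including the use of the ``additionally bounded'' strengthening to get $u\in\ell^\infty(V)$.
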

\begin{remark}
The previous theorem is a slight extension of \cite[Corollary
4.3]{HKLW}. Specifically, it suffices to look for subsolutions in
$\ell^{1}$ for the direction (ii) $\Longrightarrow$ (i) is not found
in \cite{HKLW}.
\end{remark}

Next, we come to the application of Theorem~\ref{thm;US1}.

\begin{theorem} \label{thm;Graph;US1}
Let $Q$ be the regular Dirichlet form associated to $(b,c)$. Then, the
following assertions are equivalent.
\begin{itemize}
\item[(i)] $Q = Q^{\max}$.
\item[(ii)] $D(L) = \{u \in  D(Q^{\max} )\mid \ow L u \in \ell^2(V,m)\}$.
\item[(iii)] For all $u \in  D(Q^{\max} )$ such that $\ow L u \in \ell^2(V,m)$
and all $v \in D(Q^{\max} )$,
$$\ow{Q}(u,v) = \sum_{x\in V} \ow L u(x)v(x)m(x).$$
\end{itemize}
\end{theorem}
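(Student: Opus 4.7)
The plan is to reduce this graph-level theorem directly to the abstract Theorem \ref{thm;US1} from Section \ref{section;Regular}, using the explicit identifications of $\mathcal{D}(\cL)$ and $D(Q^{\max})$ that were established in the preceding subsection on functions of finite energy.

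First I would invoke Proposition \ref{functions of finite energy discrete graph} to identify $\ow{D}(Q)$ with $\ow{D}$ and $D(Q^{\max})$ with $\ow{D} \cap \ell^2(V,m)$, and then use Theorem \ref{prop;Graphs,L} to identify $\cL$ as the restriction of $\ow{L}$ to $D(\cL) = \ow{D}$. Putting these together, the abstract set $\{u \in D(\cL) \cap L^2(X,m) \mid \cL u \in L^2(X,m)\}$ appearing in Theorem \ref{thm;US1}~(ii) coincides, in the graph setting, with $\{u \in D(Q^{\max}) \mid \ow{L} u \in \ell^2(V,m)\}$. Hence the current condition (ii) is literally the graph translation of the abstract condition (ii).

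Next, for condition (iii), the same identifications turn the abstract Green formula
\[
\ow{Q}(u,v) = \int_X \cL u \cdot v \, dm \qquad \text{for } u \in D(\cL) \cap L^2,\ \cL u \in L^2,\ v \in \ow{D}(Q) \cap L^2
\]
into exactly the pointwise-summed version
\[
\ow{Q}(u,v) = \sum_{x \in V} \ow{L} u(x) v(x) m(x)
\]
for $u \in D(Q^{\max})$ with $\ow{L} u \in \ell^2(V,m)$ and $v \in D(Q^{\max})$. So the three graph conditions (i), (ii), (iii) are nothing but the three abstract conditions of Theorem \ref{thm;US1} read through the dictionary provided by Proposition \ref{functions of finite energy discrete graph} and Theorem \ref{prop;Graphs,L}. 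Applying Theorem \ref{thm;US1} then yields the claimed equivalence.

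The only point requiring a moment of care, rather than being a genuine obstacle, is confirming that the regularity hypothesis of Theorem \ref{thm;US1} is indeed met, which was already noted when $(Q,D(Q))$ was introduced as a regular Dirichlet form associated to $(b,c)$ via $D(Q) = \overline{C_c(V)}^{\aV{\cdot}_Q}$. With that in hand, the proof is a one-line reduction to the abstract theorem and no further computation is needed.
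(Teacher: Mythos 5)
Your proposal is correct and is exactly the paper's route: the paper states this theorem as a direct application of Theorem~\ref{thm;US1}, with the dictionary given by Proposition~\ref{functions of finite energy discrete graph} (identifying $\ow{D}(Q)=\ow{D}$ and $D(Q^{\max})=\ow{D}\cap\ell^2(V,m)$) and Theorem~\ref{prop;Graphs,L} (identifying $\cL$ as $\ow{L}$ on $\ow{D}$), which is precisely your reduction.
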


% Finally, the application of Theorem~\ref{thm;US2} yields an extension of \cite[Corollary~1]{HKMW} to non locally finite graphs.
%
% \begin{theorem}\label{thm;graph;US2}
% Let $Q$ be the regular Dirichlet form associated to $(b,c)$. Suppose there is an intrinsic metric such that the distance balls are finite. Then,  $Q=Q^{\rm max}$.
% \end{theorem}

%%%%%%%%%%%%%%%%%%%%%%%%%%%%%%%%%%%%%%%%%%%%%%%%%%%%%%%%%%%%%%%%%%%%%%%%%%%%%%%%

\subsection{Stochastic Completeness} \label{Stochastic}

We now come to the application of Theorem \ref{SC-Reg} concerning stochastic completeness.
It is known (see \cite{KL}) that the generator $L^{(1)}$ of the $\ell^1$
semigroup is a restriction of $\widetilde{L}$
to the domain $ D(L^{(1)})$  which satisfies
$$ D(L^{(1)}) \subseteq \{ u \in \ell^1(V,m)\mid \widetilde{L}u \in
\ell^1(V,m)\}.$$
Here, equality holds if furthermore the following assumption (A) is
satisfied (compare \cite[Theorem 5]{KL}):
 \begin{itemize}  \item [(A)] Every infinite path $(x_n)$ of vertices has infinite measure, i.e., $\sum_n  m(x_n)=\infty$. \end{itemize}
With this the following theorem  is then an immediate consequence of
Theorem \ref{SC-Reg}
 and Theorem~\ref{prop;Graphs,L}.

% The implication (iii)$\Longrightarrow$ (i) follows as, clearly, (iii) implies (ii)
% and the implication (i)$\Longrightarrow$ (iii)  under

%
\begin{theorem}
Let $Q$ be the regular Dirichlet form associated to $(b,c)$. Consider the statements:
\begin{itemize}
\item[(i)] $Q$ is stochastically complete.
\item[(ii)]
For all $u \in \ow{D} \cap \ell^2(V,m) \cap \ell^1(V,m)$ satisfying
$\widetilde{L}u \in \ell^1(V,m) \cap \ell^2(V,m)$ we have
$$ \sum_{x\in V} \widetilde{L}u (x)m(x) = 0.$$

\item [(iii)]
For all $u \in \ell^1(V,m)$ satisfying $\widetilde{L}u \in
\ell^1(V,m)$ we have
$$ \sum_{x\in V} \widetilde{L}u (x)m(x) = 0.$$
\end{itemize}
Then,  $\mathrm{(i)}$ $\Longleftrightarrow$ $\mathrm{(ii)}$ and  $\mathrm{(iii)}$ $\Longrightarrow$ $\mathrm{(i)}$. If (A) is satisfied, then  $\mathrm{(i)}$ $\Longleftrightarrow$ $\mathrm{(iii)}$.
\end{theorem}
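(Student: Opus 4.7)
The plan is to deduce the theorem from the abstract characterization Theorem~\ref{SC-Reg} of stochastic completeness for regular Dirichlet forms, the identification of $\mathcal{L}$ on graphs as the restriction of $\widetilde{L}$ to $\widetilde{D}$ given in Theorem~\ref{prop;Graphs,L}, and the description of $D(L^{(1)})$ from \cite{KL} for the last step. No Hilbert/Banach space work is needed beyond what has already been done in the abstract part.

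First I would establish the equivalence $\mathrm{(i)} \Longleftrightarrow \mathrm{(ii)}$. By Theorem~\ref{SC-Reg}, $Q$ is stochastically complete if and only if $\int_V \mathcal{L} u\, dm = 0$ for every $u \in D(\mathcal{L}) \cap \ell^1(V,m) \cap \ell^2(V,m)$ with $\mathcal{L} u \in \ell^1(V,m) \cap \ell^2(V,m)$. Theorem~\ref{prop;Graphs,L} identifies $D(\mathcal{L}) = \widetilde{D}$ and $\mathcal{L} = \widetilde{L}|_{\widetilde{D}}$, so this condition is literally (ii).

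Next I would verify the implication $\mathrm{(iii)} \Longrightarrow \mathrm{(i)}$ via the route $\mathrm{(iii)} \Longrightarrow \mathrm{(ii)}$. This is a matter of domain inclusion: any $u$ appearing in (ii) lies in $\ell^1(V,m)$ and satisfies $\widetilde{L} u \in \ell^1(V,m)$, so the hypothesis of (iii) applies and yields the desired vanishing. Combined with the already-established equivalence of (i) and (ii), this gives (iii) $\Longrightarrow$ (i).

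Finally, for the implication $\mathrm{(i)} \Longrightarrow \mathrm{(iii)}$ under hypothesis (A), I would invoke the abstract Theorem~\ref{thm;SC} applied to the $\ell^1$-generator $L^{(1)}$: stochastic completeness is equivalent to $\int_V L^{(1)} u\, dm = 0$ for all $u \in D(L^{(1)})$. Under assumption (A), the result from \cite{KL} quoted just above the theorem identifies $D(L^{(1)}) = \{u \in \ell^1(V,m) \mid \widetilde{L} u \in \ell^1(V,m)\}$ and $L^{(1)} u = \widetilde{L} u$ on this domain, so the abstract criterion becomes precisely (iii). The main subtlety is that without (A) this identification of $D(L^{(1)})$ with $\{u \in \ell^1 : \widetilde{L} u \in \ell^1\}$ need not hold, which is exactly why the full equivalence $\mathrm{(i)} \Longleftrightarrow \mathrm{(iii)}$ requires (A); everything else is transparent once the previously established abstract machinery and the explicit form of $\mathcal{L}$ on graphs are in place.
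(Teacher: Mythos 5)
Your proposal is correct and follows essentially the same route as the paper, which dismisses the proof as an immediate consequence of Theorem~\ref{SC-Reg} together with the identification $D(\mathcal{L})=\widetilde{D}$, $\mathcal{L}=\widetilde{L}|_{\widetilde{D}}$ from Theorem~\ref{prop;Graphs,L}, with the role of (A) (identifying $D(L^{(1)})$ with the maximal $\ell^1$-domain of $\widetilde{L}$) explained exactly as you do in the accompanying remark. Your filling in of the easy domain-inclusion step (iii) $\Longrightarrow$ (ii) and the use of Theorem~\ref{thm;SC} for (i) $\Longrightarrow$ (iii) under (A) is precisely what the paper intends.
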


\begin{remark}
The equivalence between (i) and (iii) holds whenever
$L^{(1)}$ is the maximal restriction of $\widetilde{L}$ on $\ell^1 (V,m)$.
Condition (A) comes into play as it  ensures that $L^{(1)}$ is this maximal restriction.
\end{remark}

%%%%%%%%%%%%%%%%%%%%%%%%%%%%%%%%%%%%%%%%%%%%%%%%%%%%%%%%%%%%%%%%%%%%%%%%%%%%%%%%
%%

\subsection{Recurrence} \label{subParabolicity}
As discussed in the general setting we will restrict our
investigation to irreducible Dirichlet forms. We can characterize
irreducibility in terms of connectedness of the underlying graph
(see \cite{LSV,KLVW} as well).

\begin{lemma}\label{lemma;irreducible}
Let $Q$ be the regular Dirichlet form associated to $(b,c)$. The following
assertions are equivalent:
\begin{itemize}
\item[(i)] $Q$ is irreducible
\item[(ii)] $(b,c)$ is connected
\end{itemize}
\end{lemma}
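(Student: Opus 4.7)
The plan is to prove both implications by working directly with the definitions of $Q$-invariance and connectedness of $(b,c)$, exploiting the fact that the measure $m$ is strictly positive on every vertex.

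For (ii)~$\Longrightarrow$~(i), I will proceed by contraposition. Assume $A \subseteq V$ is $Q$-invariant with neither $m(A) = 0$ nor $m(V \setminus A) = 0$. Since $m(\{x\}) > 0$ for every $x \in V$, both $A$ and $V \setminus A$ are nonempty. Picking $x \in A$ and $y \in V \setminus A$ and invoking connectedness of $(b,c)$, a path from $x$ to $y$ must cross the boundary of $A$ at some step, yielding vertices $u \in A$ and $v \in V \setminus A$ with $b(u,v) > 0$. Now I will test invariance on the function $f := 1_{\{u,v\}} \in C_c(V) \subseteq D(Q)$. A direct computation using the formula for $\ow{Q}_{b,c}$ shows
\begin{align*}
Q(1_A f) + Q(1_{V\setminus A} f) - Q(f) = 2\,b(u,v) > 0,
\end{align*}
since the edge $\{u,v\}$ contributes zero to $Q(f)$ but full weight $b(u,v)$ to each of $Q(1_{\{u\}})$ and $Q(1_{\{v\}})$. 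This contradicts $Q$-invariance of $A$.

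For (i)~$\Longrightarrow$~(ii), I again argue by contraposition. Suppose $(b,c)$ is disconnected and fix a connected component $A$; then $A$ and $V \setminus A$ are both nonempty and $b(a,a') = 0$ whenever $a \in A$, $a' \in V \setminus A$. I will show $A$ is $Q$-invariant. For $f \in C_c(V)$ the identity $Q(f) = Q(1_A f) + Q(1_{V\setminus A} f)$ is immediate from the sum in $\ow{Q}_{b,c}$, since the cross terms involve only vanishing weights $b(a,a') = 0$. To pass to general $f \in D(Q) = \overline{C_c(V)}^{\|\cdot\|_Q}$, I will approximate $f$ by $f_n \in C_c(V)$ in the form norm; the same identity for the $f_n$ together with the pointwise bound $Q(1_A(f_n - f_m)) \leq Q(f_n - f_m)$ shows that $(1_A f_n)$ is form-Cauchy, so its limit (which is $1_A f$ by $L^2$-convergence) lies in $D(Q)$, and the invariance identity for $f$ follows by taking limits. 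Since $m(A), m(V \setminus A) > 0$, this witnesses that $Q$ is not irreducible.

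The only step with any subtlety is verifying $1_A f \in D(Q)$ and the additivity identity for general $f \in D(Q)$ (not just for $f \in C_c(V)$); this is where the absence of cross-edges between $A$ and $V \setminus A$ is essential, and where one must be careful that multiplication by $1_A$ is a contraction with respect to the form norm in order to push the approximation through. All other steps reduce to elementary manipulations of the explicit quadratic form $\ow{Q}_{b,c}$.
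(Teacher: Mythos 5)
Your proof is correct and follows essentially the same route as the paper: one direction shows a connected component is $Q$-invariant by verifying the identity on $C_c(V)$ and passing to general elements of $D(Q)$ via a form-norm approximation and closedness, while the other direction rules out invariant sets with a crossing edge. The only difference is cosmetic: you phrase both implications contrapositively and make explicit, via the test function $1_{\{u,v\}}$ and the computation $Q(1_A f)+Q(1_{V\setminus A}f)-Q(f)=2b(u,v)>0$, the step the paper dismisses with ``it is not hard to see''.
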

\begin{proof}
(i) $\Longrightarrow$ (ii): Let $W$ be a connected component of $V$ with respect to $(b,c)$. We show $1_W \cdot u \in D(Q)$ for any $u \in D(Q)$ and
\begin{gather}   Q(u,u) = Q(1_{W} u,1_W u) + Q(1_{W^c} u, 1_{W^c} u) \tag{$\heartsuit$} \label{formula;irreducible}
\end{gather}
holds, where $1_W$ denotes the characteristic function of the set $W$. Since $Q$ is a restriction of $\ow{Q}$ and $W$ is a connected component, the formula \eqref{formula;irreducible} follows as soon as we show $1_W\cdot u \in D(Q)$. This is immediate for $u \in C_c(V)$. Let $u \in D(Q)$ be arbitrary. By regularity there exists a sequence $(u_n)$ in $C_c(V)$ such that $u_n \to u$ with respect to $\aV{\cdot}_Q$. Since \eqref{formula;irreducible} holds for compactly supported functions, we obtain
$\aV{1_W\cdot u_n - 1_W\cdot u_m}_Q \leq \aV{ u_n - u_m}_Q$  and, because $Q$ is closed, we infer $1_W\cdot u \in D(Q)$. From  irreducibility we conclude $W = \emptyset$ or $V\setminus W = \emptyset$ showing the connectedness.

(ii) $\Longrightarrow$ (i): It is not hard to see that \eqref{formula;irreducible} can only be satisfied if $W$ is connected. By the connectedness of the graph we infer $W = V$ or $W = \emptyset$ showing the irreducibility of $Q$.
\end{proof}

With these preparations Theorem \ref{recurrence,regular} reads in the graph situation as follows.

\begin{theorem} \label{parabolicgraph}
Let $Q$ be the regular Dirichlet form
associated to a connected graph $(b,c)$. Then the following assertions are equivalent:

\begin{itemize}
\item[(i)] Q is recurrent.
\item[(ii)] For all $u \in \widetilde{D}$ with $\tilde{L}u \in \ell^1 (V,m)$
the following equality holds
$$ \sum_{x\in V} \widetilde{L}u(x)m(x) = 0.$$

\item[(iii)] For all $u \in \widetilde{D}$ with $\tilde{L}u \in \ell^1 (V,m)$
and all $v \in \ow{D}\cap \ell^\infty$ the following equality holds
$$\ow{Q}(u,v) = \sum_{x\in V} (\ow{L}u)(x)v(x)m(x).$$
\end{itemize}
\end{theorem}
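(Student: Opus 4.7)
The plan is to reduce Theorem~\ref{parabolicgraph} directly to the abstract Theorem~\ref{recurrence,regular} via the graph-specific identifications already set up in this section. Concretely, Lemma~\ref{lemma;irreducible} gives that connectedness of $(b,c)$ is equivalent to irreducibility of $Q$, so the hypothesis of Theorem~\ref{recurrence,regular} holds. Proposition~\ref{functions of finite energy discrete graph} identifies $\ow{D}(Q) = \ow{D}$ with $\ow{Q}$ acting as $\ow{Q}_{b,c}$, and Theorem~\ref{prop;Graphs,L} identifies $\cL$ as the restriction of the formal Laplacian $\ow{L}$ to $D(\cL) = \ow{D}$. Integration against $m$ on the discrete space becomes the weighted sum $\sum_x \cdot\, m(x)$.

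Under these translations, the equivalence (i)~$\Leftrightarrow$~(ii) of Theorem~\ref{parabolicgraph} is nothing but the equivalence (i)~$\Leftrightarrow$~(ii) of the abstract theorem. For condition (iii), the Beurling--Deny decomposition identifies the killing measure $k$ of the regular Dirichlet form with the measure $c$ on $V$, so ``$k = 0$'' in the abstract (iii) corresponds to ``$c \equiv 0$'' in the graph case, and the abstract Green formula $\ow{Q}(u,v) = \int \cL u\cdot v\,dm$ translates to the graph Green formula in (iii). The implication (i)~$\Rightarrow$~(iii) is then immediate: recurrence forces $c \equiv 0$ by Theorem~\ref{recurrence,regular}, and the abstract Green formula yields the graph Green formula on test functions in $\ow{D}\cap \ell^\infty$.

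The main obstacle is the converse direction, namely extracting (ii) (and hence recurrence, by the already-proved equivalence) from (iii) as stated, which posits only the Green formula and not explicitly the vanishing of $c$. My plan here is to test (iii) against finitely supported cutoffs $v = \ind_F$ with $F\uparrow V$: the right-hand side converges to $\sum_x \ow{L}u(x) m(x)$ by dominated convergence, using $\ow{L}u\in\ell^1$, while the left-hand side expands into a boundary flux $\sum_{x\in F,\, y\notin F} b(x,y)(u(x)-u(y))$ plus $\sum_{x\in F}c(x)u(x)$ via the bilinear form of $\ow{Q}_{b,c}$. The technical heart is that the extended validity of the Green formula on all of $\ow{D}\cap\ell^\infty$, rather than merely on $C_c(V)$ where it is automatic by Lemma~\ref{Green}(b), is exactly the information needed to force both the boundary flux and the $c$-contribution to drop out in the limit; equivalently, it mirrors the abstract argument of Theorem~\ref{recurrence,regular}(iii)~$\Rightarrow$~(ii), which uses $k=0$ to feed $v=1$ into the formula and derive a contradiction from $\ow{Q}(1,u)=0$. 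Combining with the equivalence (i)~$\Leftrightarrow$~(ii) closes the loop.
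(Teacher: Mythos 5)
Your reduction to Theorem~\ref{recurrence,regular} via Lemma~\ref{lemma;irreducible}, Proposition~\ref{functions of finite energy discrete graph} and Theorem~\ref{prop;Graphs,L} is exactly the paper's proof (the paper does nothing beyond this specialization), and your handling of (i)~$\Leftrightarrow$~(ii) and of (i)~$\Rightarrow$~(iii) is fine.

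The gap is in your bridge from (iii) back to (ii). Testing the Green identity against $v=1_F$ with $F\subseteq V$ finite yields no information: by Lemma~\ref{Green}(b) the identity $\ow{Q}(u,1_F)=\sum_{x\in F}\ow{L}u(x)m(x)$ holds automatically for every $u\in\ow{F}\supseteq\ow{D}$, recurrent or not, and as $F\uparrow V$ both sides converge (using $\ow{L}u\in\ell^1(V,m)$) to the same number $\sum_x\ow{L}u(x)m(x)$. So the limit procedure produces a tautology; neither the boundary flux nor the $c$-contribution is forced to vanish, and your claim that the validity of the formula on all of $\ow{D}\cap\ell^\infty$ ``is exactly the information needed'' is not substantiated by this computation. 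The only way (iii) gives more than the automatic formula on $C_c(V)$ is through non-compactly supported test functions, and the abstract proof of (iii)~$\Rightarrow$~(ii) in Theorem~\ref{recurrence,regular} uses precisely $v=1$; for that it needs the clause ``$k=0$'' (here $c\equiv 0$) to know $1\in\ow{D}$ and $\ow{Q}(1,u)=0$. This clause cannot be recovered from the graph (iii) as printed: any finite connected graph with $c\not\equiv 0$ satisfies (iii) verbatim (all sums are finite, so Lemma~\ref{Green}(b) applies to every $v$), yet it is transient and violates (ii), since for $u\equiv 1$ one has $\sum_x\ow{L}u(x)m(x)=\sum_x c(x)>0$. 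So your instinct that something is missing is correct, but it points to an omission in the printed statement rather than to a provable step: condition (iii) must be read with the additional requirement $c\equiv 0$, matching the ``killing measure vanishes'' clause of the abstract (iii), and then (iii)~$\Rightarrow$~(ii) is exactly the abstract argument with $v=1$; no cutoff argument with finitely supported $v$ can replace it.
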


\begin{remark} A result related to (i) $\Leftrightarrow$ (iii) of the previous
theorem can be found in \cite{JP}.
There, a boundary term of the form $Q(u,v) - \as{\ow{L}u,v}$ is defined via a
limiting procedure on
a set of functions which is somewhat different from ours. It is then shown that recurrence is equivalent to this boundary term vanishing.
\end{remark}

A result that is related but somewhat independent of the theory developed in Section~\ref{section;Regular} is the following. It shows that functions of finite energy can be replaced by bounded functions. The proof is given below.

\begin{theorem}\label{parabolicgraphvar}
Let $Q$ be the regular Dirichlet form
associated to a connected graph $(b,0)$.
Then the following assertions are equivalent:

\begin{itemize}
\item[(i)] Q is recurrent.
\item[(ii)] For all $u \in \ell^\infty (V)$ with  $\ow L u \in \ell^1(V,m)$ the following
equality holds
\[
\sum_{x \in V} \ow L u (x) m (x) =0.
\]
\end{itemize}
\end{theorem}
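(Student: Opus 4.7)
The plan is to prove each direction separately, pivoting on Theorem~\ref{parabolicgraph} as a bridge between the finite-energy and bounded-function versions.

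For $(\mathrm{i}) \Rightarrow (\mathrm{ii})$, the strategy is to show that under recurrence every $u \in \ell^\infty(V)$ with $\widetilde{L}u \in \ell^1(V,m)$ automatically lies in $\widetilde{D}$, whence Theorem~\ref{parabolicgraph} immediately yields $\sum_x \widetilde{L}u(x) m(x) = 0$. Recurrence supplies a sequence $(e_n) \subseteq D(Q) \cap C_c(V)$ with $0 \leq e_n \leq 1$, $e_n \to 1$ and $Q(e_n) \to 0$ (the standard approximation of $1 \in D(Q)_e$; cf.\ the Lemma preceding Theorem~\ref{recurrence,regular}). I would then consider the compactly supported products $u e_n \in C_c(V) \subseteq D(Q)$, apply Green's formula (Lemma~\ref{Green}) to write $\widetilde{Q}(u e_n) = \sum_x u(x) e_n(x) \widetilde{L}(u e_n)(x) m(x)$, and expand $\widetilde{L}(u e_n)$ via the discrete Leibniz rule $\widetilde{L}(u e_n)(x) m(x) = e_n(x) \widetilde{L}u(x) m(x) + \sum_y b(x,y) u(y)(e_n(x) - e_n(y))$. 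Symmetrising the resulting cross-sum in $x \leftrightarrow y$ collapses it to $\tfrac12 \sum_{x,y} b(x,y) u(x) u(y)(e_n(x)-e_n(y))^2$, which in absolute value is at most $\|u\|_\infty^2 Q(e_n)$. Together with the trivial bound $\|u\|_\infty \|\widetilde{L}u\|_{\ell^1(m)}$ on the main term this gives the uniform estimate $\widetilde{Q}(u e_n) \leq \|u\|_\infty \|\widetilde{L}u\|_{\ell^1(m)} + \|u\|_\infty^2 Q(e_n)$. The Fatou-type lower semi-continuity of $\widetilde{Q}$ (Theorem~\ref{Fatou}) then forces $u \in \widetilde{D}$, and Theorem~\ref{parabolicgraph} finishes the argument.

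For $(\mathrm{ii}) \Rightarrow (\mathrm{i})$ I would argue by contraposition. If $Q$ is not recurrent, connectedness together with Lemma~\ref{lemma;irreducible} and the recurrence/transience dichotomy for irreducible forms forces $Q$ to be transient. Fix $x_0 \in V$ and set $f = \mathbf{1}_{\{x_0\}}/m(x_0) \in \ell^1(V,m) \cap \ell^\infty(V)$. Transience guarantees that the Green potential $u := \int_0^\infty T_t f\, dt$ is finite almost everywhere, belongs to $D(Q)_e \subseteq \widetilde{D} = D(\mathcal{L})$, and satisfies $L_e u = f$, so by Theorem~\ref{prop;Graphs,L} one has $\widetilde{L}u = f$ pointwise. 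Boundedness of $u$ is the standard discrete symmetric Markov chain maximum principle via the hitting-time identity $g(x, x_0) = \mathbb{P}_x(\tau_{x_0} < \infty)\, g(x_0, x_0)$, which yields $u(x) \leq g(x_0, x_0) < \infty$. Hence $u \in \ell^\infty$, $\widetilde{L}u = f \in \ell^1$, and $\sum_x \widetilde{L}u(x) m(x) = 1 \neq 0$, contradicting (ii).

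The main obstacle is the forward direction and the symmetrisation step: the clean bound only emerges after one rewrites the cross term as a quadratic in $(e_n(x)-e_n(y))$ so that $|u(x)u(y)| \leq \|u\|_\infty^2$ pairs with the vanishing $Q(e_n) \to 0$; a direct estimate of the bilinear expression $(u(x)-u(y))(e_n(x)-e_n(y))$ would instead need the unavailable control $\widetilde{Q}(u) < \infty$. In the reverse direction, the technical point is the boundedness of the single-point Green potential, which is classical in discrete potential theory but has to be invoked carefully inside the abstract Dirichlet form framework.
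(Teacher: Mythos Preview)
Your argument is correct, and both directions differ genuinely from the paper's proof.

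For $(\mathrm{i})\Rightarrow(\mathrm{ii})$ the paper does \emph{not} show that bounded $u$ with $\widetilde Lu\in\ell^1$ have finite energy. Instead it takes $e_n$ to be the equilibrium potential for $\mathrm{cap}(o,\Omega_n)$ (Lemma~\ref{approxcap}), expands $\sum_x e_n^2(x)\,\widetilde Lu(x)\,m(x)$ via the identity $(e_n^2(x)-e_n^2(y))u(x)=-u(x)(e_n(x)-e_n(y))^2+2e_n(x)u(x)(e_n(x)-e_n(y))$, and uses that $e_n$ is \emph{superharmonic} on $\Omega_n$ to extract the one-sided inequality $\sum_x\widetilde Lu(x)m(x)\ge 0$; applying the same to $1-u$ gives the reverse inequality. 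Your Leibniz--symmetrisation route is cleaner and in fact proves more: it yields the inclusion $\{u\in\ell^\infty:\widetilde Lu\in\ell^1\}\subseteq\widetilde D$ under recurrence, after which Theorem~\ref{parabolicgraph} applies verbatim. The paper's argument, by contrast, never leaves the bounded world and does not establish this inclusion.

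For $(\mathrm{ii})\Rightarrow(\mathrm{i})$ the paper avoids potential theory altogether: it observes via Theorem~\ref{parabolicgraph} that recurrence is independent of $m$, replaces $m$ by a finite measure, and then invokes Theorem~\ref{regular,connection} and Theorem~\ref{H neq H0} to produce a bounded $u$ with $\widetilde Lu\in\ell^2\subseteq\ell^1$ and $\sum_x\widetilde Lu(x)m(x)<0$. Your explicit Green-function witness $u=G\delta_{x_0}$ is equally valid; its boundedness via the hitting-time identity $g(x,x_0)=\mathbb P_x(\tau_{x_0}<\infty)\,g(x_0,x_0)$ is correct for the continuous-time chain generated by $Q$ (strong Markov property), but sits outside the purely form-theoretic toolkit developed in the paper. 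The measure-change trick buys a self-contained argument; your construction buys an explicit counterexample.
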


\begin{remark}  The above characterization is a direct analogue to  \cite[Theorem 1.1]{GM}.
\end{remark}

In order to prove the theorem, we establish some notation and
prepare a lemma. For any point $o \in V$,  consider the inner
product
\[ \langle u,v \rangle_o = \widetilde{Q} (u,v) + u(o)v(o) \]
on $V$. If the graph $(b,0)$ is connected the pair $(\widetilde{D}, \langle \cdot, \cdot \rangle_{o})$ is a Hilbert space
and pointwise evaluation of functions is continuous with respect to the
corresponding norm, c.f. \cite[Lemma 3.6]{GHKLW}. Let $\{\Omega_n\}_{n \ge 1}$ be an exhaustion of $V$ with
finite sets
such that each of $\Omega_n$ includes $o \in V$. Furthermore, for $x \in V$ and
finite $G \subseteq V$,  we use the notation
$${\rm cap}(x) = \inf \{Q(u) \mid u \in D(Q), u(x) \geq 1\}$$ and  $${\rm
cap}(x,G) = \inf \{Q(u) \mid u \in D(Q), u(x) \geq 1, u|_{V\setminus G } \equiv
0\}. $$

\begin{lemma} \label{approxcap}
Let $Q$ be the regular Dirichlet form
associated to a connected graph $(b,0)$. Then, the following assertions hold.
\begin{itemize}
\item[(a)] There exists a unique $e \in \overline{D(Q)}^{\|\cdot\|_o}$ such that $\widetilde Q(e) = {\rm{cap}}(o)$ and
unique $e_n \in D(Q)$  such that $Q(e_n) = {\rm{cap}}(o, \Omega_n)$ for
every $n\geq 1$. Furthermore, these functions satisfy $0\leq e_n,e \leq 1$ and $e_n(o) = e(o)=1$.
\item[(b)] The inequality $\ow{L}e_n(x) \geq 0$ holds for every $x \in \Omega_n$, $n\geq 1$.
\item[(c)] ${\rm{cap}}(o, \Omega_n) \to {\rm{cap}}(o)$ and $e_n \to e$
pointwise, as $n \to \infty$.
\item[(d)] If $Q$ is recurrent, then ${\rm cap}(x) = 0$ for any $x \in V.$ 
\end{itemize}
\end{lemma}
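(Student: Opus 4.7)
The plan for (a) is to treat $e_n$ as the unique minimizer of $Q$ on the closed convex admissible set
\[
A_n := \{u \in D(Q) : u|_{V \setminus \Omega_n} \equiv 0,\; u(o) \geq 1\},
\]
which sits inside the finite-dimensional space $F_n \subseteq C_c(V) \subseteq D(Q)$ of functions supported on $\Omega_n$. In the non-trivial case $V \neq \Omega_n$, the form $Q$ is positive definite on $F_n$: if $Q(u) = 0$ every edge contribution $b(x,y)(u(x)-u(y))^2$ vanishes, so connectedness forces $u$ constant on $V$, and $u \equiv 0$ on the non-empty set $V \setminus \Omega_n$ then yields $u \equiv 0$ throughout. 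Strict convexity and coercivity of $Q|_{F_n}$ give uniqueness of $e_n$. For $e$, work in the Hilbert space $(\widetilde{D},\langle\cdot,\cdot\rangle_o)$; the subspace $\overline{D(Q)}^{\|\cdot\|_o}$ is closed, and $\|\cdot\|_o$-continuity of point evaluation (from \cite[Lemma~3.6]{GHKLW}) makes $\{u(o) \geq 1\}$ closed and convex. Since $\|u\|_o^2 = \widetilde{Q}(u) + u(o)^2$, minimizing $\widetilde{Q}$ on $\{u \in \overline{D(Q)}^{\|\cdot\|_o} : u(o) \geq 1\}$ reduces, via rescaling $u \mapsto u/u(o)$, to minimizing the norm on the closed affine hyperplane $\{u(o) = 1\}$, producing a unique minimizer $e$. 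The same rescaling forces $e_n(o) = e(o) = 1$, and applying the normal contraction $(\cdot \wedge 1) \vee 0$---which is admissibility-preserving and form-non-increasing---gives $0 \leq e_n, e \leq 1$ by uniqueness.

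For (b), first-order optimality suffices. For $x \in \Omega_n \setminus \{o\}$, both $e_n \pm t \,1_{\{x\}}$ are admissible for all small $t \in \RR$, so $Q(e_n, 1_{\{x\}}) = 0$; at $x = o$ only $t \geq 0$ preserves $u(o) \geq 1$, so $Q(e_n, 1_{\{o\}}) \geq 0$. The Green-type identity Lemma \ref{Green}(b), applied with $w = e_n \in \widetilde{F}$ (Lemma \ref{Green}(a)) and $v = 1_{\{x\}} \in C_c(V)$, identifies this inner product as $m(x)\widetilde{L}e_n(x)$, yielding the claim. For (c), monotonicity of $n \mapsto \mathrm{cap}(o, \Omega_n)$ gives $\mathrm{cap}(o, \Omega_n) \searrow c_\infty \geq \mathrm{cap}(o)$; for the reverse inequality, approximate any admissible $u \in D(Q)$ by $u_k \in C_c(V)$ in $\|\cdot\|_Q$ so that $u_k(o) \to u(o)$, and note that $u_k/u_k(o)$ is eventually admissible for some $\mathrm{cap}(o, \Omega_N)$, yielding $c_\infty \leq Q(u_k/u_k(o)) \to Q(u)/u(o)^2 \leq Q(u)$. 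For $e_n \to e$ pointwise: boundedness of $\widetilde{Q}(e_n)$ and of $e_n(o)$ makes $(e_n)$ bounded in $(\widetilde{D}, \|\cdot\|_o)$, so it has a weakly convergent subsequence whose limit lies in the weakly closed convex set $\overline{D(Q)}^{\|\cdot\|_o}$, has value $1$ at $o$ by continuity of evaluation, and has $\widetilde{Q}$-value at most $\mathrm{cap}(o)$ by weak lower semi-continuity. Uniqueness of $e$ forces this limit to be $e$; a subsubsequence argument promotes this to weak convergence of the whole sequence, hence to pointwise convergence via continuity of evaluation.

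For (d), recurrence supplies---as invoked in the lemma preceding Theorem \ref{recurrence,regular}---a sequence $(\varphi_n) \subseteq D(Q) \cap C_c(V)$ with $0 \leq \varphi_n \leq 1$, $\varphi_n \to 1$ pointwise and $Q(\varphi_n) \to 0$. For any $x \in V$, once $\varphi_n(x) \geq 1/2$ the function $2\varphi_n$ is admissible for $\mathrm{cap}(x)$, giving $\mathrm{cap}(x) \leq 4 Q(\varphi_n) \to 0$. The main obstacle I anticipate is the Hilbert-space machinery in (a) and (c)---verifying $\|\cdot\|_o$-continuity of point evaluation, that $\overline{D(Q)}^{\|\cdot\|_o}$ is weakly closed, and the positive-definiteness needed for uniqueness---but these follow cleanly from \cite[Lemma~3.6]{GHKLW} together with the connectedness hypothesis. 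The degenerate case $V = \Omega_n$ (finite graph with $c \equiv 0$) is handled separately: there $Q(1) = 0$ makes $\mathrm{cap} \equiv 0$ and $e_n = e = 1$ satisfies all assertions trivially.
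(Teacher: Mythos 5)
Your proof is correct, and in several components it takes a genuinely different route from the paper's. In (a) you obtain $e_n$ from strict convexity and coercivity of $Q$ on the finite-dimensional space of functions supported in $\Omega_n$ (handling the degenerate case $\Omega_n=V$ separately), whereas the paper treats $e_n$ and $e$ uniformly as minimizers of $u\mapsto\|u\|_o^2-1$ over closed convex sets in the Hilbert space $(\widetilde{D},\|\cdot\|_o)$; your construction of $e$ coincides with theirs. Part (b) is essentially the paper's argument (first-order optimality plus the Green-type formula), and you even get harmonicity of $e_n$ off $o$, which is more than is needed. In (c) the paper proves $\limsup_n\mathrm{cap}(o,\Omega_n)\leq\mathrm{cap}(o)$ by approximating the minimizer $e$ itself by finitely supported functions, and then derives $\|e_n-e\|_o\to 0$ (hence pointwise convergence) from the parallelogram identity and convexity of the admissible set; you instead approximate an arbitrary admissible $u\in D(Q)$ and use monotonicity, and for the convergence of $e_n$ you use weak compactness, weak lower semicontinuity and uniqueness plus a sub-subsequence argument. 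Both are valid; the paper's route yields the stronger norm convergence, yours is a touch more robust and avoids identifying the limiting energy beforehand. One point you leave implicit (as does the paper) is that the infimum of $\widetilde{Q}$ over $\{u\in\overline{D(Q)}^{\|\cdot\|_o}:u(o)\geq1\}$ equals $\mathrm{cap}(o)$, which is defined via $D(Q)$ only; this is needed both for the literal statement of (a) and to recognize the weak limit in (c) as a minimizer, but it follows from exactly the rescaling-and-approximation device you already use in (c), so it is a one-line addition rather than a gap. Finally, for (d) the paper simply cites Woess's Theorem 2.12 together with the equivalence of the two notions of recurrence, while you give a short self-contained argument from the recurrence null-sequence $(\varphi_n)$ of Proposition \ref{choice-e-n}(c); this is a nice simplification that keeps the proof internal to the paper's toolkit.
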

\begin{proof}
(a) Let $\|\cdot\|_o$ denote the norm induced by
$\as{\cdot,\cdot}_o$. Then, $e$ and $e_n$ are minimizers of the
functional $u \mapsto \|u\|_o^2 - 1$ on the sets $\{u \in
\overline{D(Q)}^{\aV{\cdot}_o}\mid u(o) \geq 1\}$ and $\{u \in
D(Q)\mid u(o) \geq 1, u|_{V\setminus G }\}$ respectively. The
existence and uniqueness of $e$ and $e_n$ follow from the closedness
and the convexity of these two sets (note that pointwise evaluation
of functions is continuous w.r.t. $\aV{\cdot}_o$) and standard
Hilbert space theory. The furthermore statement follows from the fact that $\| (u\wedge 1)\vee 0\|_o \leq \|u\|_o$ for each $u \in \ow{D}$.

(b) Let $x \in \Omega_n$ be given and $\delta_x$ be the function
which is $\frac{1}{m(x)}$ at $x$ and $0$ elsewhere. For each
$\varepsilon > 0$, we obtain
$$Q(e_n) \leq Q(e_n + \varepsilon \delta_x) = Q(e_n) + \varepsilon^2Q(\delta_x)
+ 2\varepsilon Q(e_n,\delta_x), $$
which together with Lemma \ref{Green} implies
$$\ow{L}e_n(x) = \as{\ow{L}e_n,\delta_x} =  Q(e_n,\delta_x) \geq 0.$$
(c) Obviously, we have ${\rm cap}(o,\Omega_n)\geq {\rm cap}(o)$. We
next show the inequality  $\limsup_{n\to \infty} {\rm cap}
(o,\Omega_n) \leq {\rm cap}(o)$. Choose a sequence of finitely
supported functions $(\varphi_k)$ with
$$\|\varphi_k - e\|_o \to 0, \text{ as } k \to \infty.$$
This convergence implies pointwise convergence. Thus, we infer $\varphi_k(o) \to
1$. Since $(\Omega_n)$ is an exhausting sequence and each $\varphi_k$ has
finite support, we obtain
$$\limsup_{n\to \infty} {\rm cap} (o,\Omega_n) \leq
Q\left(\frac{1}{\varphi_k(o)}\varphi_k\right) =
\frac{1}{\varphi_k(o)^2}Q(\varphi_k) \to \ow{Q}(e) = {\rm cap}(o),
\quad\text{ as } k\to \infty.$$
This gives the desired inequality. It remains to show the statement
on pointwise convergence.
 Using the parallelogram identity
and the convexity of $\{u \in \overline{D(Q)}^{\aV{\cdot}_o}\mid
u(o) \geq 1\} $, we obtain
$$\aV{\frac{e_n - e}{2}}^2_o \leq \frac{1}{2}\aV{e_n}_o^2  +
\frac{1}{2}\aV{e}_o^2 - {\rm cap}(o) - 1 \to 0,\quad\text{ as } n \to \infty. $$
(d) This is a consequence of \cite[Theorem 2.12]{Woe} and the well known fact that the notion of recurrence given in \cite{Woe} coincides with the one for Dirichlet forms used above, c.f. \cite{Sch} for details. This finishes the proof.
\end{proof}

Now, we are in a position to prove the theorem.
\begin{proof}[Proof of Theorem \ref{parabolicgraphvar}]
(i) $\Longrightarrow$ (ii):
Let $e_n$ and $e$ be as in Lemma \ref{approxcap}. Without loss of generality let $0 \le  u (x) \le 1$ for every $x \in V$.
 Set $|\nabla u |^2 (x) := \sum_{y \in V}
     b(x,y) \left( u(x) - u(y)\right)^2$.
Since $e_n$ has finite support, Lemma \ref{Green} implies
$$
 \sum_{x \in V} e_n^2 (x) \ow L u (x) m (x)
=\sum_{x \in V} \ow L  e_n^2 (x) u (x) m (x).
$$
Furthermore, the equation
$$(e_n^2(x) - e_n^2(y)) u(x) = -  u(x) (e_n(x) - e_n(y))^2 + 2 e_n(x) u(x)(e_n(x) - e_n(y)) $$
and the definition of $\ow{L}$ yields
$$\sum_{x \in V} \ow L  e_n^2 (x) u (x) m (x) \\
=-\sum_{x \in V} | \nabla e_n (x)
     |^2 u (x) + 2\sum_{x \in \Omega_{n}}
     e_n (x) \ow L e_n (x) u (x) m (x).$$
By the assumption the form $Q$ is recurrent. Thus, by Lemma~\ref{approxcap}~(d) ${\rm cap}
(o) =0$ and, therefore, $ e\equiv 1$ holds. From the pointwise convergence of the
$e_n$ to $e$ we then obtain $e_n (x) \to 1$ for all $x\in V$. Since the $e_n$ are uniformly bounded,
we infer from Lebesgue's Theorem  that
$$\sum_{x \in V} e_n^2 (x) \ow L u (x) m (x) \to \sum_{x \in V} \ow L u (x) m (x),\text{ as }n \to \infty.$$
 Lemma~\ref{approxcap}~(c) and $0 \leq u \leq 1$ implies
$$\sum_{x \in V} | \nabla e_n (x)
     |^2 u (x) \leq \sum_{x \in V} | \nabla e_n (x)
     |^2 = 2 Q(e_n) \to 0, \text{ as } n \to \infty.$$
Altogether, these considerations yield
$$ \sum_{x \in V} \ow L u (x) m (x) = \lim_{n \to \infty}  2\sum_{x \in \Omega_{n}}
     e_n (x) \ow L e_n (x) u (x) m (x).$$
Since $e_n$ is superharmonic on $\Omega_n$ and $u$ is positive, we obtain
\[\sum_{x \in V} \ow L u (x) m (x) \ge 0.\]
The same argumentation may be repeated with $1-u$ in place of $u$,
and we arrive at the conclusion.

(ii) $\Longrightarrow$ (i): It follows from Theorem
\ref{parabolicgraph} that recurrence is independent of the
underlying measure $m$. Hence, we may assume $m(V) < \infty.$ Now,
assume $Q$ is transient. From Theorem \ref{regular,connection} we
infer $Q^{\rm max} \neq Q$. Theorem~\ref{H neq H0}  and the
characterization of $\cL$ in the graph case imply the existence of a
nontrivial function $u \in \ow{D}\cap\ell^1(V,m)\cap\ell^\infty(V)$
such that $\ow{L}u \in \ell^2(V,m)$ with $\ow{L}u \leq 0$ and
$\ow{L}u \neq 0$. As $m$ is a finite measure $\ell^2(V,m) \subseteq
\ell^1(V,m)$ holds and, therefore, $\ow{L}u \in \ell^1(V,m)$. From
the choice of $u$, we then infer
$$\sum_{x \in V} \ow{L}u(x)m(x) < 0,$$
which shows the claim.\end{proof}

\section{Application to weighted manifolds}\label{section;Manifold}
%%%%%%%%%%%%%%%%%%%%%%%%%%%%%%%%%%%%%%%%%%%%%%%%%%%%%%%%%%%%%%%%%%%%%%%%%%%%%%%%
%%%%%%%%%%%%%%%%
In this section we apply the theory of Section \ref{section;Regular}
to the standard Dirichlet energy on a weighted manifold. This is
exactly the situation that was studied in \cite{GM}. As this setting
is standard we only give a brief introduction and refer the reader
to \cite{Grigoryan.09} for a detailed discussion of the relevant
analysis on manifolds.

Let $(M,g)$ be a connected smooth Riemannian manifold and let $\Phi$
be a strictly positive smooth function on $M$. The triplet $(M,g,
m)$, where $m = \Phi d\mathrm{vol}_{g}$ and $d\mathrm{vol}_{g}$ is
the Riemannian measure, is called a \emph{weighted manifold}. Such a
manifold is a metric measure space which has the same distance and
shape as the underlying Riemannian manifold but its measure is
arbitrary. A weighted manifold carries a second-order elliptic
operator, called the \emph{weighted Laplacian}, defined as
$$
\Delta_\Phi u = \frac{1}{\Phi} {\rm{div}} (\Phi \nabla u),
$$
where the involved operators $\nabla$ and ${\rm div}$ are understood in the distributional sense. The energy form formally corresponding to this operator is given by
$$Q(u,v) = \int_M g(\nabla u, \nabla v) dm.$$
It is well known that $(Q,C_c^\infty(M))$  is closable on $L^2(M,m)$
and its closure, which will be denoted by $(Q,W^{1}_{0}(M,m))$, is a
regular Dirichlet form.

Furthermore, we will need the space  $$W^{1}(M,m) = \{u \in L^2(M,m)
\, | \, |\nabla u| \in L^2(M,m)\},$$ where $|X| =
g(X,X)^\frac{1}{2}.$
\subsection{Functions of finite energy}

We  determine the space of functions of finite energy
$\widetilde{D}(Q)$ and the formal operator $\cL$ associated with
$(Q,W^{1}_{0}(M,m))$.  We will need the  definitions given in
Section \ref{subsection;FFE}.

\begin{prop}[Reflected Dirichlet space and Beppol-Levi functions]
The reflected Dirichlet space of $(Q,W^{1}_{0}(M,m))$ is given by the space
of Beppo-Levi functions
$${\rm BL}(M,m) = \{u \in L^2_{\rm loc}(M) \, | \, |\nabla u|  \in
L^2(M,m)\}.$$
On this space the extended form $\ow{Q}$ acts as
$$\ow{Q}(u,v) = \int_M g(\nabla u,\nabla v) \, dm.$$
In particular, the Dirichlet form $Q^{\rm max}$ is the restriction of $\ow{Q}$ to the space $W^1(M,m)$. The domain of the operator $\mathcal{L}$ is given by
$$D(\mathcal{L}) = \{ u \in {\rm BL}(M,m)\, | \, \Delta_\Phi u \in L^1_{\rm
loc}(M)\}$$
on which it acts as $\mathcal{L}u = - \Delta_\Phi u.$
\end{prop}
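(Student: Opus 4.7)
The plan is to identify each object separately, starting from the Beurling--Deny decomposition of $Q$ on $W^{1}_{0}(M,m)$ and working up to $\mathcal{L}$.

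First I would establish that the Beurling--Deny decomposition of $Q$ has vanishing jump measure $J=0$, vanishing killing measure $k=0$, and strongly local part $d\mu^{(c)}(u)=|\nabla u|^{2}\,dm$; this follows because $C_{c}^{\infty}(M)$ is a form core on which $Q(u)=\int_{M}|\nabla u|^{2}\,dm$, together with the uniqueness of the decomposition. Consequently the extension of $Q$ to $D(Q)_{\rm loc}$ reads $Q(u)=\int_{M}|\nabla u|^{2}\,dm$. Next I would identify $D(Q)_{\rm loc}$ with $\{u\in L^{2}_{\rm loc}(M):|\nabla u|\in L^{2}_{\rm loc}(M,m)\}$: for the nontrivial inclusion $\supseteq$, given such $u$ and a relatively compact open $G\subseteq M$, choose a smooth cut-off $\chi\in C_{c}^{\infty}(M)$ equal to $1$ on $G$; then $\chi u\in W^{1}(M,m)$ is compactly supported and hence lies in $W^{1}_{0}(M,m)=D(Q)$ by standard mollification and cut-off, agreeing with $u$ on $G$.

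Then I would turn to $\widetilde{D}(Q)={\rm BL}(M,m)$. For $u\in{\rm BL}(M,m)$, the chain rule for Sobolev functions gives $|\nabla u^{(n)}|=1_{\{|u|<n\}}|\nabla u|$ almost everywhere, so $u^{(n)}\in D(Q)_{\rm loc}$ with $Q(u^{(n)})\le\int_{M}|\nabla u|^{2}\,dm$; monotone convergence then yields $u\in\widetilde{D}(Q)$ and $\widetilde{Q}(u)=\int_{M}|\nabla u|^{2}\,dm$. For the converse, given $u\in\widetilde{D}(Q)$ the truncation gradients are consistent (they agree on $\{|u|\le\min(n,k)\}$ by locality of $\mu^{(c)}$), so they define a measurable gradient field $\nabla u$, and monotone convergence gives $\int_{M}|\nabla u|^{2}\,dm=\lim_{n}Q(u^{(n)})<\infty$. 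The only subtle point is verifying $u\in L^{2}_{\rm loc}(M)$: since $u$ is a.e.\ finite, for any relatively compact ball $B\subseteq M$ there is a positive-measure set $A\subseteq B$ on which $|u|$ is bounded by some constant $C_{0}$, hence $u^{(n)}|_{A}=u|_{A}$ for $n\ge C_{0}$ and the averages of $u^{(n)}$ over $A$ stabilize; combined with the uniform $L^{2}$ bound on $|\nabla u^{(n)}|$ and a Poincaré inequality on $B$ anchored at $A$, this gives a uniform $L^{2}(B)$ bound on $u^{(n)}$, and Fatou yields $u\in L^{2}(B)$. This is the technical heart of the proof.

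The identification of $Q^{\max}$ as the restriction of $\widetilde{Q}$ to $W^{1}(M,m)={\rm BL}(M,m)\cap L^{2}(M,m)$ is then immediate from $D(Q^{\max})=\widetilde{D}(Q)\cap L^{2}(M,m)$. Finally, for $\mathcal{L}$, using $C_{c}^{\infty}(M)\subseteq D(Q)\cap C_{c}(M)$ and integration by parts,
\[
\widetilde{Q}(u,v)=\int_{M}g(\nabla u,\nabla v)\,\Phi\,d\mathrm{vol}_{g}=-\int_{M}\frac{1}{\Phi}\,\mathrm{div}(\Phi\,\nabla u)\,v\,d\mathrm{vol}_{g}=-\int_{M}(\Delta_{\Phi}u)\,v\,dm
\]
for $v\in C_{c}^{\infty}(M)$, where $\Delta_{\Phi}u$ is interpreted distributionally. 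Hence the $f$ in the definition of $D(\mathcal{L})$ equals $-\Delta_{\Phi}u$ in the distributional sense, and the requirement $f\in L^{1}_{\rm loc}(M)$ translates into $\Delta_{\Phi}u\in L^{1}_{\rm loc}(M)$, yielding the stated domain and action. The main obstacle is the $L^{2}_{\rm loc}$ regularity step above; the remainder is bookkeeping involving the chain rule and integration by parts.
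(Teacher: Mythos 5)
Your argument is correct and follows essentially the same route as the paper: the crux in both is the inclusion $\ow{D}(Q)\subseteq {\rm BL}(M,m)$, obtained from a local Poincar\'e-type inequality applied to the truncations $u^{(n)}$, whose energies are uniformly bounded, to get $u\in L^2_{\rm loc}(M)$, after which the $L^2$ gradient is identified and the statements on $Q^{\max}$ and $\mathcal{L}$ follow from the definitions and integration by parts. The only differences are cosmetic: you anchor the Poincar\'e inequality at a set where $u$ is bounded and patch the truncation gradients, whereas the paper reduces to $u\geq 0$, uses means over $G$, and identifies $\nabla u$ as a weak limit of $\nabla u^{(n)}$ in the space of $L^2$ vector fields.
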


\begin{proof} ${\rm BL} (M,m) \subseteq \ow{D}(Q)$: Let $u \in {\rm BL}(M,m)$ be bounded
and let a smooth function with compact support $\varphi$ be given.
It is well known that $u \cdot \varphi \in W_{0}^{1}(M,m)$  (see
e.g. \cite[Corollary~5.6]{Grigoryan.09}), which gives that  $u$
agrees locally with a function from $ W^{1}_{0}(M,m)$.  Now, the
remaining statements are  a rather  direct consequence of the
definitions and the Markov property of the weighted energy on ${\rm
BL}(M,m)$ (compare proof of Theorem \ref{Fatou} for a similar
reasoning).

$\ow{D}(Q) \subseteq {\rm BL}(M,m)$: Let $u \in \ow{D}(Q)$ and a
relatively compact, open subset $G\subseteq M$ be given. As the
positive and negative part of $u$ belong to $\ow{D}(Q)$, we may
assume $u \geq 0$. Recall that we set $u^{(n)} = (u \wedge n)\vee
(-n)$. By definition $u^{(n)} \in {\rm BL}(M,m)$ for each $n\in \NN$
and $\sup_n Q(u^{(n)}) <\infty$. Furthermore, as $G$ is relatively
compact, a Poincar\'{e}-type inequality holds on $G$, that is there
is a constant $C > 0$ (depending on $G$) such that
$$\int_G |v - \overline{v}_G|^2\, dm \leq C \int_M g(\nabla v, \nabla v)\, dm$$
for each $v \in {\rm BL}(M,m).$ Here, $\overline{v}_G = m(G)^{-1} \int_G v \, dm$. This  rough Poincar\'e
inequality holds since on compact sets the Ricci curvature is bounded from
below and $\Phi^{-1}$ is locally bounded. We define $f := |u -
\overline{u}_G|^2$ and $f_n = |u^{(n)} - \overline{u^{(n)}}_G|^2.$ The function
$u$ is positive and almost surely finite. Hence, $f$ is well defined.
Furthermore, $f$ is almost surely finite if and only if $\overline{u}_G <
\infty$. We deduce from monotone convergence and the positivity of $u$ that
$\overline{u}_G = \lim_n \overline{u^{(n)}}_G$. Using  this observation and
applying the Poincar\'{e} inequality, we obtain with the help of Fatou's lemma
$$\int_G f\, dm \leq \liminf_{n\to \infty} \int_G f_n\, dm \leq C
\liminf_{n\to \infty}Q(\nabla u^{(n)}) < \infty.$$
This implies $\overline{u}_{G} < \infty$ and $u \in L^2(G)$. As $G$ was
arbitrary we obtain $u \in L^2_{\rm loc}(M)$. Furthermore, $\nabla u^{(n)}$ is
a bounded sequence in the Hilbert space $\vec{L}^2(M,m)$, i.e., the space of $L^2$ vector fields. Thus, it possesses a weakly convergent subsequence with weak limit $V \in \vec{L}^2(M,m)$. Choose a smooth
vector field $X$ with compact support. We obtain
\begin{align*}
\int_M g(V,X)\, dm  &= \lim_{k\to \infty} \int_M g(\nabla u^{(n_k)},X)\,dm \\
&= -\lim_{k\to \infty}\int_M u^{(n_k)} \frac{1}{\Phi}{\rm div}(\Phi X)\,dm\\
 &= -\int_M u \frac{1}{\Phi}{\rm div}(\Phi X)\,dm.
\end{align*}
 As $X$ was arbitrary, we obtain $\nabla u = V \in \vec{L}^2(M,m)$ which show the claim.

The statement about the action of $Q$ follows from the definition of the
extension of $Q$ to $\ow{D}(Q)$. Furthermore, the statement on $\cL$ follows
from the definition of $\Delta_\Phi$ via distributions.
\end{proof}

\begin{remark}
For Brownian motion on an open subset of $\mathbb{R}^n$ the
statement about the reflected Dirichlet space is one of the examples
of Section 6.5 in \cite{CF}. For general Riemannian manifolds the
statement  seems to be new.
\end{remark}

\subsection{Extensions of Dirichlet forms}

Theorem \ref{H neq H0} now reads in the manifold setting.

\begin{theorem}\label{thm;MF;H neq H0}
The following assertions are equivalent.
\begin{itemize}
\item[(i)] $W^1(M,m) \neq W^1_0(M,m)$.
\item[(ii)] There exists a nontrivial $u \in W^1(M,m)\cap L^1(M,m)\cap L^{\infty}(M,m)$
such that $\Delta_\Phi u \in L^2(M,m)$ with $u\geq 0$, $ \Delta_\phi u \geq 0$ and $ \Delta_\Phi u \neq
0$.
\end{itemize}
\end{theorem}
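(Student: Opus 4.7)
The plan is to deduce this theorem as a direct application of Theorem~\ref{H neq H0} from Section~\ref{section;Regular}, using the identification of $\cL$ and the reflected Dirichlet space obtained in the preceding proposition. Once the translations are set up, essentially no additional analysis is required; the work lies in matching function spaces.

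For the implication (i) $\Longrightarrow$ (ii), I would apply Theorem~\ref{H neq H0} to the regular Dirichlet form $(Q, W^1_0(M,m))$. That yields a nontrivial $u \in D(\cL) \cap L^1(M,m) \cap L^2(M,m)$ with $\cL u \in L^2(M,m)$, $u \ge 0$, $\cL u \le 0$ and $\cL u \ne 0$, and by the final assertion of that theorem, $u$ can additionally be chosen in $L^\infty(M,m)$. By the proposition preceding Theorem~\ref{H neq H0}, this means $u \in D(L')$, and since by construction $D(L') \subseteq D(Q^{\max}) = W^1(M,m)$, we automatically have $u \in W^1(M,m)$. The identity $\cL u = -\Delta_\Phi u$ from the previous proposition then converts the conditions $\cL u \le 0$, $\cL u \ne 0$, $\cL u \in L^2(M,m)$ into $\Delta_\Phi u \ge 0$, $\Delta_\Phi u \ne 0$ and $\Delta_\Phi u \in L^2(M,m)$, yielding the desired function in (ii).

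For the converse (ii) $\Longrightarrow$ (i), given $u$ as described, I would check that $u$ meets the hypotheses of assertion (ii) in Theorem~\ref{H neq H0}. Since $W^1(M,m) \subseteq \mathrm{BL}(M,m) = \ow{D}(Q)$ and $\Delta_\Phi u \in L^2(M,m) \subseteq L^1_{\mathrm{loc}}(M)$, the previous proposition gives $u \in D(\cL)$ with $\cL u = -\Delta_\Phi u$. The inclusion $L^1(M,m) \cap L^\infty(M,m) \subseteq L^2(M,m)$ puts $u$ into $L^2(M,m)$ as well, and the sign and nonvanishing conditions on $\Delta_\Phi u$ translate to $\cL u \le 0$ and $\cL u \ne 0$. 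Applying the direction (ii) $\Longrightarrow$ (i) of Theorem~\ref{H neq H0} now yields $Q \ne Q^{\max}$, which by the previous proposition means precisely $W^1_0(M,m) \ne W^1(M,m)$.

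There is no real obstacle: the entire argument is bookkeeping on spaces and the sign convention $\cL = -\Delta_\Phi$. The only point worth checking carefully is that the bounded function supplied by Theorem~\ref{H neq H0} genuinely lies in $W^1(M,m)$, and this is immediate from the identification $D(L') = \{u \in D(\cL) \cap L^2(M,m) : \cL u \in L^2(M,m)\} \subseteq D(Q^{\max})$ established in the previous subsection.
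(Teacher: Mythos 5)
Your proposal is correct and follows essentially the same route as the paper, which states this theorem as an immediate specialization of Theorem~\ref{H neq H0} via the preceding proposition identifying $\ow{D}(Q)={\rm BL}(M,m)$, $D(Q^{\max})=W^1(M,m)$ and $\cL u=-\Delta_\Phi u$. The bookkeeping you carry out (the sign convention, $L^1\cap L^\infty\subseteq L^2$, $L^2\subseteq L^1_{\rm loc}$, and $D(\cL)\cap L^2\subseteq D(Q^{\max})=W^1(M,m)$) is exactly the translation the paper leaves implicit.
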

Next, we come to the application of Theorem~\ref{thm;US1}.

\begin{theorem} \label{thm;MF;US1}
Let $L$ be the self-adjoint operator associated with
$(Q,W^1_0(M,M))$. Then, the following assertions are equivalent.
\begin{itemize}
\item[(i)] $W^1(M,m) = W^1_0(M,m)$.
\item[(ii)] $D(L) = \{u \in  W^1(M,m )\mid \Delta_\Phi u \in L^2(M,m)\}$.
\item[(iii)] For all $u \in  W^1(M,m )$ such that $\Delta_\Phi \in L^2(M,m)$
and all $v \in W^1(M,m)$ we have
$$\ow{Q}(u,v) = - \int_M \Delta_\Phi u v \,dm.$$
\end{itemize}
\end{theorem}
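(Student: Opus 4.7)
The plan is to reduce this statement to Theorem \ref{thm;US1} by matching the abstract objects appearing there with the concrete ones in the manifold setting. The preceding proposition already provides all the necessary dictionary: the reflected Dirichlet space is $\ow{D}(Q) = \mathrm{BL}(M,m)$, the maximal Silverstein extension $Q^{\max}$ is the restriction of $\ow{Q}$ to $W^1(M,m) = \ow{D}(Q)\cap L^2(M,m)$, and the operator $\cL$ equals $-\Delta_\Phi$ on its domain $D(\cL) = \{u \in \mathrm{BL}(M,m) \mid \Delta_\Phi u \in L^1_{\rm loc}(M)\}$.

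First I would check that Theorem \ref{thm;US1}(i) translates to (i) here. Since $(Q,W^1_0(M,m))$ is a regular Dirichlet form and $Q^{\max}$ has domain $W^1(M,m)$, the equality $Q = Q^{\max}$ is exactly $W^1(M,m) = W^1_0(M,m)$. Next, for (ii), I would observe that the set $\{u \in D(\cL) \cap L^2(M,m) \mid \cL u \in L^2(M,m)\}$ coincides with $\{u \in W^1(M,m) \mid \Delta_\Phi u \in L^2(M,m)\}$: the intersection $\mathrm{BL}(M,m) \cap L^2(M,m)$ is $W^1(M,m)$ by definition, and the requirement $\Delta_\Phi u \in L^2(M,m)$ automatically subsumes the $L^1_{\rm loc}$-condition that enters in the definition of $D(\cL)$. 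Hence the domain description in Theorem \ref{thm;US1}(ii) is precisely condition (ii) above.

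Finally for (iii), the test space $\ow{D}(Q) \cap L^2(M,m)$ appearing in Theorem \ref{thm;US1}(iii) equals $W^1(M,m)$ by the previous proposition, and the identity $\ow{Q}(u,v) = \int_X \cL u\, v\, dm$ becomes $\ow{Q}(u,v) = -\int_M \Delta_\Phi u\, v\, dm$ after substituting $\cL = -\Delta_\Phi$. Thus all three conditions of the present theorem correspond verbatim to the three conditions of Theorem \ref{thm;US1}, and the equivalence follows.

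I do not expect any genuine difficulty: the entire content of the argument has already been absorbed into Theorem \ref{thm;US1} on the abstract side and into the identification of $\ow{D}(Q)$, $D(Q^{\max})$, and $\cL$ in the preceding proposition on the concrete side. The only task is a careful translation, and the sole point requiring a brief remark is the reduction of the $L^1_{\rm loc}$-condition in $D(\cL)$ to the $L^2$-condition in (ii) and (iii), which is immediate.
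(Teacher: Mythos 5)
Your proposal is correct and is exactly the route the paper intends: Theorem \ref{thm;MF;US1} is stated as a direct application of Theorem \ref{thm;US1}, with the translation supplied by the preceding proposition identifying $\ow{D}(Q)=\mathrm{BL}(M,m)$, $D(Q^{\max})=W^1(M,m)$ and $\cL=-\Delta_\Phi$. Your remark that the $L^2$-condition on $\Delta_\Phi u$ subsumes the $L^1_{\rm loc}$-condition in $D(\cL)$ is the only point of care, and you handle it correctly.
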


\subsection{Stochastic completeness}

The following is the application of Theorem \ref{SC-Reg} to the manifold setting.

\begin{theorem} The following assertions are equivalent:
\begin{itemize}
\item[(i)] $(Q,W^{1}_0(M,m))$ is stochastically complete.
\item[(ii)] For all $u \in L^1(M,m) \cap L^2(M,m)$ such that $|\nabla u| \in
L^2(M,m)$ and $\Delta_\Phi \in L^1(M,m) \cap L^2(M,m)$ the following equality holds
$$\int_M \Delta_\Phi u \, dm = 0.$$
\end{itemize}
\end{theorem}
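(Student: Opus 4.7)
The plan is to deduce this statement directly from Theorem~\ref{SC-Reg} by combining it with the identification of $D(\cL)$ for the standard Dirichlet energy given in the preceding proposition. Concretely, Theorem~\ref{SC-Reg} asserts that stochastic completeness of the regular Dirichlet form $(Q,W^1_0(M,m))$ is equivalent to
$$\int_M \cL u \, dm = 0 \quad \text{for all } u \in D(\cL) \cap L^1(M,m) \cap L^2(M,m) \text{ with } \cL u \in L^1(M,m) \cap L^2(M,m),$$
so it suffices to rewrite the class of admissible test functions in analytic terms and to observe that the sign $\cL = -\Delta_\Phi$ is irrelevant for the vanishing of the integral.

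First I would unfold the definition of $D(\cL)$. By the previous proposition, $D(\cL) = \{u \in \mathrm{BL}(M,m) \mid \Delta_\Phi u \in L^1_{\rm loc}(M)\}$ and $\cL u = -\Delta_\Phi u$. Recall that $\mathrm{BL}(M,m) = \{u \in L^2_{\rm loc}(M) \mid |\nabla u| \in L^2(M,m)\}$. Hence a function $u \in D(\cL) \cap L^1(M,m) \cap L^2(M,m)$ with $\cL u \in L^1(M,m) \cap L^2(M,m)$ is precisely a function $u \in L^1(M,m) \cap L^2(M,m)$ with $|\nabla u| \in L^2(M,m)$ and $\Delta_\Phi u \in L^1(M,m) \cap L^2(M,m)$: the inclusion $u \in L^2(M,m)$ already entails $u \in L^2_{\rm loc}(M)$, so the $\mathrm{BL}$-condition reduces to $|\nabla u| \in L^2(M,m)$; the requirement $\Delta_\Phi u \in L^1_{\rm loc}(M)$ is automatic once $\Delta_\Phi u \in L^1(M,m)$ holds.

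With this identification the two conditions $\int_M \cL u\, dm = 0$ and $\int_M \Delta_\Phi u\, dm = 0$ coincide up to a harmless sign, and the theorem follows at once. I expect no serious obstacle here: the bulk of the analytic work (the identification of the reflected Dirichlet space with $\mathrm{BL}(M,m)$ and of $\cL$ with $-\Delta_\Phi$) has already been carried out in the preceding proposition, and the abstract equivalence is Theorem~\ref{SC-Reg}. The only minor point to double-check is that the reformulation is genuinely equivalent and not merely an implication, but this is immediate from the observation that $L^1 \cap L^2 \subseteq L^2_{\rm loc}$ on any $\sigma$-finite measure space.
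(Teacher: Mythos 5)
Your proposal is correct and follows exactly the route the paper intends: the theorem is stated there as the direct specialization of Theorem~\ref{SC-Reg}, using the preceding proposition's identification $D(\cL)=\{u\in{\rm BL}(M,m)\mid\Delta_\Phi u\in L^1_{\rm loc}(M)\}$ with $\cL u=-\Delta_\Phi u$, and your unfolding of the test class (including the observations that $u\in L^2$ gives $u\in L^2_{\rm loc}$ and that the sign is immaterial) is precisely the translation the paper leaves implicit.
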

\begin{remark}
The previous theorem is basically the same as of
\cite[Theorem~1.2]{GM} after one realizes  that stochastic
completeness implies $W^{1}(M,m) = W^{1}_0(M,m)$ (which follows e.g.
from the implication (ii)$\Longrightarrow$ (iii) in  Theorem
\ref{regular,connection}).
\end{remark}

\subsection{Recurrence}

In the manifold setting Theorem \ref{recurrence,regular} becomes:

\begin{theorem}
The following assertions are equivalent.
\begin{itemize}
\item[(i)] $(Q,W^{1}_{0}(M,m))$ is recurrent.
\item[(ii)] For each $u \in {\rm BL}(M,m)$ such that $\Delta_\Phi u \in
L^1(M,m)$ the following equality holds
$$\int_M \Delta_\Phi u\, dm = 0.$$
\item[(iii)] For each $u \in {\rm BL}(M,m)$ such that $\Delta_\Phi u \in
L^1(M,m)$ and each $v \in {\rm BL}(M,m) \cap L^\infty(M,m)$ the following equality holds
$$\ow{Q}(u,v) = -\int_M \Delta_\Phi u v \,dm.$$
\end{itemize}
\end{theorem}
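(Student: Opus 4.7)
The plan is to recognize this theorem as a direct translation of Theorem~\ref{recurrence,regular} using the identifications supplied by the proposition that determines the reflected Dirichlet space and the operator $\mathcal{L}$. From that proposition one has $\widetilde{D}(Q) = \mathrm{BL}(M,m)$, $D(\mathcal{L}) = \{u \in \mathrm{BL}(M,m) \mid \Delta_\Phi u \in L^1_{\rm loc}(M)\}$ and $\mathcal{L}u = -\Delta_\Phi u$. Thus the proof should largely be bookkeeping: verify the hypotheses of the abstract theorem and rewrite each condition in terms of $\Delta_\Phi$.

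First I would check that $(Q,W^1_0(M,m))$ is irreducible, since Theorem~\ref{recurrence,regular} requires this. This follows from connectedness of $M$: if $A \subseteq M$ is a $Q$-invariant set, then multiplying $C_c^\infty(M)$ functions by $1_A$ must preserve $W^1_0(M,m)$ and give vanishing cross-gradient terms, which together with connectedness of $M$ forces $m(A) = 0$ or $m(M \setminus A) = 0$. Second, I note that the killing measure $k$ associated to $Q$ vanishes, because the Beurling--Deny decomposition of the purely local form $u \mapsto \int_M g(\nabla u, \nabla u)\, dm$ has no jump part and no killing part; this takes care of the $k = 0$ hypothesis appearing in part (iii) of Theorem~\ref{recurrence,regular}.

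With these preliminaries the equivalences fall out as follows. For (i)~$\Longleftrightarrow$~(ii): any $u \in \mathrm{BL}(M,m)$ with $\Delta_\Phi u \in L^1(M,m)$ automatically satisfies $\Delta_\Phi u \in L^1_{\rm loc}(M)$, hence $u \in D(\mathcal{L})$ with $\mathcal{L}u = -\Delta_\Phi u \in L^1(M,m)$; conversely every $u \in D(\mathcal{L})$ with $\mathcal{L}u \in L^1(M,m)$ is of this form. The identity $\int_M \mathcal{L}u\, dm = 0$ of Theorem~\ref{recurrence,regular}(ii) then becomes $\int_M \Delta_\Phi u\, dm = 0$. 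For (i)~$\Longleftrightarrow$~(iii): using the same identification, together with $\widetilde{D}(Q) \cap L^\infty(M,m) = \mathrm{BL}(M,m) \cap L^\infty(M,m)$, the Green-type identity $\widetilde{Q}(u,v) = \int_M \mathcal{L}u \cdot v\, dm$ translates into $\widetilde{Q}(u,v) = -\int_M \Delta_\Phi u \cdot v\, dm$, which is precisely condition (iii) here.

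I do not anticipate a genuine obstacle. The only step that requires mild care is verifying irreducibility and the vanishing of the killing measure, both of which follow from the explicit structure of the weighted Dirichlet energy; everything else is the substitution $\mathcal{L} \leftrightarrow -\Delta_\Phi$ and $\widetilde{D}(Q) \leftrightarrow \mathrm{BL}(M,m)$ provided by the previous subsection.
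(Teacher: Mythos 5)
Your proposal is correct and matches the paper's approach exactly: the paper states this theorem as a direct specialization of Theorem~\ref{recurrence,regular}, using the proposition identifying $\widetilde{D}(Q)=\mathrm{BL}(M,m)$, $D(\mathcal{L})=\{u\in\mathrm{BL}(M,m)\mid\Delta_\Phi u\in L^1_{\rm loc}(M)\}$ and $\mathcal{L}u=-\Delta_\Phi u$, so the argument is precisely the bookkeeping you describe. Your explicit checks of irreducibility (via connectedness of $M$) and of the vanishing of the killing measure are correct and in fact supply details the paper leaves implicit.
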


\begin{remark} The previous theorem is an analogue to  \cite[Theorem~1.1]{GM}. There the equivalence of (i) and (ii) is also proven but, in contrast to
our result, with the Beppo-Levi functions replaced by $L^\infty$ functions. However, the advantage of using Beppo-Levi functions is that the very general
Green type formula (iii) holds for them in the recurrent case.
\end{remark}
%%%%%%%%%%%%%%%%%%%%%%%%%%%%%%%%%%%%%%%%%%%%%%%%%%%%%%%%%%%%%%%%%%%%%%%%%%%%%%%%
%%%%%%%%%%%%%%%%
\section{Application to metric graphs}\label{section;MG}

In this section we consider metric graphs and discuss applications of the
abstract results of the previous sections. Metric graphs are in some sense a hybrid model between manifolds and
discrete graphs and fit into the framework of regular
Dirichlet forms. Most of the material presented here is
based on the thesis \cite{Hae} of one of the authors.

The basic idea of a metric graph is to view edges as intervals which are glued
together according a graph structure.

Let $l$ be a locally finite graph over a discrete countable vertex set $V$, i.e., $l:V\times V \to [0,\infty)$ is symmetric, has zero diagonal and $l(x,\cdot)$ vanishes for all but finitely many vertices. We define the set of combinatorial edges $E$ to be the equivalence classes of
$\{(x,y)\subseteq V\mid l(x,y)>0\}$ under the equivalence relation that relates $(x,y)$ and $(y,x)$ for all $x,y\in V$. By symmetry of $l$ the map $l$ is well defined on $E$.

For $e\in E$, we define the \emph{continuum edge} $\CX_e = (0,l(e)) \times
\{e\}$ and  the \emph{metric graph} to be the set
\[\CX_\Gamma = V \cup \bigcup_{e\in E} \CX_e .\]

Next, we equip $\CX_{\Gamma}$ with a topology defined in terms of a certain subspace of continuous functions. We define an orientation on the combinatorial edges which is a map $E\to\{(x,y)\subseteq V\mid l(x,y)>0\}$, $e\mapsto(\partial^{+}(e),\partial^{-}(e))$. We call $\partial^+(e)$ the \emph{initial vertex}  and the  $\partial^-(e)$ \emph{terminal vertex} of $e$. If $x$ is the initial or terminal vertex of an edge $e$ we say $x$ and $e$ are \emph{adjacent} and we write $ x\sim e$.

For a vertex $x$ adjacent to an edge $e$ and variables $t\in(0,l(e))$, we interpret $t\to x$ as $t\to 0$ if $x=\partial^{+}(e)$ and as $t\to l(e)$ if $x=\partial^{-}(e)$.

For a function $f:\CX_{\Gamma}\to\R$,  we denote by $f_{e}$ the restriction of $f$ to $\CX_{e}$ which is essentially a function on $(0,l(e))$. The continuous functions $C(\CX_{\Gamma})$ are the functions $f:\CX_{\Gamma}\to\R$ such that $f_{e}$ is continuous for all $e\in E$ and for all $x \in V$ we have
\begin{align*}
    f(x)=\lim_{t\to x}f_{e}(t)
\end{align*}
for all $e\in E$ adjacent to $x$. The space $C(\CX_{\Gamma})$ gives rise to a topology on $\CX_{\Gamma}$ such that $\CX_{\Gamma}$ becomes a locally compact Hausdorff space. We denote by $C_{c}(\CX_{\Gamma})$ the subspace of continuous functions of compact support. For a function $f$ such that $f_{e}$ are weakly differentiable for all $e\in E$, we write $f'=(f_{e}')_{e\in E}$ and $f''=(f_{e}'')_{e\in E}$ similarly.

We introduce  Lebesgue spaces $L^{p}(\CX_{\Gamma})=\bigoplus_{e\in E}L^{p}(0,l(e))$, % and the Sobolev spaces $W^{p,q}(\CX_{\Gamma})=\bigoplus_{e\in E} W^{p,q}(0,l(e))$,
$p\in\{1,2\}$, where we neglect the vertices since points have Lebesgue measure zero. We denote the space of functions locally in $L^{p}(\CX_{\Gamma})$, $p\in\{1,2\}$, by $L^{p}_{\rm loc}(\CX_{\Gamma})$. This space is no direct sum since locally here means with respect to the topology of $\CX_{\Gamma}$. For a more detailed description of the set up, we refer to \cite[Chapter~1]{Hae}.

Now, let $b$ be another locally finite graph over $V$ such as in Section~\ref{section;Graph}. Note that the combinatorial structure of $l$ and $b$ is completely independent. We introduce the quadratic form
\[\oh{Q}(u)= \sum_{e\in E} \int\limits_0^{l(e)} |u_e'(t)|^2 dt +
\frac{1}{2} \sum_{x,y\in V} b(x,y)(u(x)-u(y))^2\]
on the space which will turn out to be the space of functions of finite energy
\[\widetilde{D} = \{u\in C(\CX_\Gamma)\mid u'\in L^{2}(\CX_{\Gamma}), \,\oh{Q} (u) <\infty\}. \]
On this space $\oh{Q}$ has the Markov property, i.e., for each $u \in \ow{D}$ and each normal contraction $C: \RR \to \RR$ we have that $C \circ u \in \ow{D}$ and $\oh{Q}(C\circ u) \leq \oh{Q}(u)$. We define $Q$ to be the restriction of $\oh Q$ to
\begin{align*}
    D(Q) = \overline{\widetilde{D}\cap   C_c(\CX_\Gamma)}^{\|\cdot\|_{\oh Q}},
\end{align*}
where $\|\cdot\|_{\oh Q}^2 = \|\cdot\|^2 + \oh{Q}(\cdot)$. It can be checked  that $Q$ is a regular Dirichlet form, for details see \cite[Chapter~1, Section~3]{Hae}.

\subsection{Functions of finite energy}
We will deal with the  functions of finite energy $\widetilde D(Q)$
arising from a regular Dirichlet form $Q$ as introduced in
Section~\ref{subsection;FFE}.

\begin{theorem}\label{thm;MG;FinEnergy} Let $Q$ be the regular Dirichlet form defined on a metric graph as above. Then,
 $\widetilde{D}=\ow D(Q)$ and $\ow{Q}$ on $\ow{D}(Q)$ is given by $\oh{Q}$.
\end{theorem}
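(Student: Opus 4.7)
My plan is to verify the two inclusions $\widetilde D\subseteq\ow D(Q)$ and $\ow D(Q)\subseteq\widetilde D$ (the latter up to $m$-a.e.\ agreement with a continuous representative), and along the way identify the extended form as $\oh Q$. The crucial preliminary step is to understand $D(Q)_{\rm loc}$ for a metric graph; everything else then runs parallel to the arguments used for discrete graphs in Proposition~\ref{functions of finite energy discrete graph}.

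First I would establish that $D(Q)\subseteq\widetilde D$. The point is that $\widetilde D$, equipped with $\|\cdot\|_{\oh Q}$, is already closed: a $\|\cdot\|_{\oh Q}$-Cauchy sequence converges in $H^{1}(0,l(e))$ on every edge, hence uniformly on each $\overline{\CX}_e$ by the one-dimensional Sobolev embedding; this preserves the matching condition $f(x)=\lim_{t\to x}f_e(t)$ at every vertex, so the limit lies in $C(\CX_\Gamma)$, and Fatou's lemma guarantees $\oh Q$-finiteness. Since $D(Q)$ is the $\|\cdot\|_{\oh Q}$-closure of $\widetilde D\cap C_c(\CX_\Gamma)$, the inclusion follows. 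Next I would show that $u\in D(Q)_{\rm loc}$ iff $u$ has a representative that is continuous on $\CX_\Gamma$ with $u'\in L^{2}_{\rm loc}(\CX_\Gamma)$ and locally finite $b$-energy, and that on such $u$ the canonical extension of $Q$ acts exactly as $\oh Q$. The nontrivial direction uses local finiteness of $l$: given a relatively compact open $G\subseteq\CX_\Gamma$, construct a cutoff $\varphi\in C_c(\CX_\Gamma)\cap\widetilde D$ with $\varphi\equiv 1$ on $G$ by using only finitely many edges, and observe that for bounded continuous $u$ of locally finite energy, $u\varphi\in\widetilde D\cap C_c(\CX_\Gamma)\subseteq D(Q)$ agrees with $u$ on $G$.

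With these preliminaries in hand, the inclusion $\widetilde D\subseteq\ow D(Q)$ is quick. For $u\in\widetilde D$, each truncation $u^{(n)}$ is bounded, continuous, and satisfies $\oh Q(u^{(n)})\le\oh Q(u)$ by the Markov property, so $u^{(n)}\in D(Q)_{\rm loc}$ by the previous paragraph. The sequence $\oh Q(u^{(n)})$ is monotonically increasing in $n$ (since $|(u^{(n)})'|^2=|u'|^2\mathbf 1_{|u|<n}$ pointwise and $|u^{(n)}(x)-u^{(n)}(y)|^2\uparrow|u(x)-u(y)|^2$), and monotone convergence in the edge integral and the vertex sum yields $\oh Q(u^{(n)})\uparrow\oh Q(u)$. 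Therefore $u\in D(Q)_{\rm loc}^{\infty}$ with $\ow Q(u)=\oh Q(u)<\infty$.

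The reverse inclusion $\ow D(Q)\subseteq\widetilde D$ is the main obstacle. Given $u\in\ow D(Q)$, by the characterization of $D(Q)_{\rm loc}$ each $u^{(n)}$ admits a continuous representative $v_n\in\widetilde D\cap L^{\infty}$. These are compatible: for $m\le n$ both $v_m$ and $(v_n)^{(m)}$ are continuous representatives of $u^{(m)}$, hence equal pointwise (continuous functions on a connected space agreeing a.e.\ agree everywhere), so $v_m=(v_n)^{(m)}$. Patching, define $\bar u(x):=\lim_n v_n(x)\in[-\infty,\infty]$. Since $u$ is $m$-a.e.\ finite, $\bar u$ is finite $m$-a.e.; the delicate point is to promote this to pointwise finiteness and continuity on all of $\CX_\Gamma$, including at the vertices. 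Here I would exploit the uniform energy bound $\sup_n \oh Q(v_n)=\ow Q(u)<\infty$: on each edge $e$ the functions $v_{n,e}$ form an increasing-in-$n$ family of $H^{1}(0,l(e))$ functions with uniformly bounded derivative norm, so on any subset of $(0,l(e))$ where $v_{n,e}$ is $n$-independent the family is already bounded in sup-norm, and comparing oscillations via $\|v_{n,e}-v_{n,e}(t_0)\|_{L^\infty}\le\sqrt{l(e)}\,\|v_{n,e}'\|_{L^2}$ forces pointwise finiteness of $\bar u$ on $\CX_e$ and then, through the matching condition, at each adjacent vertex. Once $\bar u$ is known to be finite and continuous on $\CX_\Gamma$, Fatou in the edge integrals and monotone convergence in the vertex sum give $\oh Q(\bar u)\le\liminf_n\oh Q(v_n)=\ow Q(u)<\infty$, so $\bar u\in\widetilde D$ and, combining with the previous paragraph, $\ow Q(u)=\oh Q(\bar u)$. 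This finishes the proof.
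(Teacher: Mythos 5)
Your argument is correct, and in the decisive step it takes a genuinely different route from the paper. The easy inclusion $\widetilde D\subseteq\ow D(Q)$ is handled as in the paper (truncate, use the Markov property, and identify bounded finite-energy elements of $D(Q)_{\rm loc}$ with bounded elements of $\widetilde D$); the paper merely ``recalls'' that identification, whereas you build it from cutoffs and local finiteness of $l$ --- a useful addition, though note that, exactly like the paper, you assert rather than verify that the canonical (Beurling--Deny) extension of $Q$ to $D(Q)_{\rm loc}$ acts as $\oh Q$ there, and both arguments lean on this. The real divergence is in the inclusion $\ow D(Q)\subseteq\widetilde D$ and the identity $\ow Q=\oh Q$: the paper first proves, for $l+b$ connected, that $\widetilde D$ with $\|u\|_o^2=|u(o)|^2+\oh Q(u)$ is a Hilbert space with continuous point evaluations and that $\|\cdot\|_o$-convergence is equivalent to pointwise convergence together with $\limsup_n\oh Q(u_n)\le\oh Q(u)$; it then takes a weakly convergent subsequence of the bounded sequence $(u^{(n)})$, identifies the weak limit with $u$, and invokes that lemma to get $\oh Q(u)=\lim_n Q(u^{(n)})=\ow Q(u)$, reducing the general case to connected components. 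You instead work edge by edge: the compatible continuous representatives $v_n$ of the truncations satisfy $\|v_{n,e}-v_{n,e}(t_0)\|_\infty\le\sqrt{l(e)}\,\|v_{n,e}'\|_2$ with $\sup_n\oh Q(v_n)=\ow Q(u)<\infty$, so $m$-a.e.\ finiteness of $u$ forces the truncations to stabilize on every edge and, via the matching conditions, to produce a finite continuous limit; monotone convergence/Fatou then gives the energy identity. This is more elementary (no weak compactness, no convergence lemma) and avoids the connectedness reduction entirely --- what you actually need in the ``agree a.e.\ hence everywhere'' step is that $m$ has full support, i.e.\ every vertex meets an $l$-edge, not connectedness, which is also the implicit nondegeneracy assumption of the setup. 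What the paper's route buys in exchange is the Hilbert-space structure of $(\widetilde D,\|\cdot\|_o)$ and the convergence criterion, which are of independent use in the metric graph analysis.
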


In order to prove the preceding theorem we need to characterize
convergence in $\ow D$ which turns out to be a Hilbert space with
respect to the norm $\|\cdot\|_o := (|u(o)|^2 +
\oh{Q}(u))^{1/2}$ for arbitrary $o\in \CX_\Gamma$ whenever the combinatorial graph $l+b$ over $V$ is connected.

\begin{lemma}
Let the graph $l+b$ be connected. The space $\widetilde{D}$ equipped with $\|\cdot\|_{o}$ for $o\in
\CX_\Gamma$ is a Hilbert space. A sequence $(u_n)$ in $
\widetilde{D}$ converges to $u\in \widetilde{D}$ with
respect to $\|\cdot\|_o$ if and only if it converges pointwise to
$u$ and $\limsup_n \oh{Q} (u_n) \leq \oh{Q}(u)$.
\end{lemma}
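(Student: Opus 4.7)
The plan is to set up $\widetilde{D}$ as a Hilbert space with the claimed norm, and then deduce the sequential characterization using the resulting Hilbert-space machinery.

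First, I would show that every point evaluation $u \mapsto u(p)$ is a continuous linear functional on $(\widetilde{D},\|\cdot\|_o)$. Since $l+b$ is connected, for any vertex $x\in V$ there is a finite path $o = x_0, x_1, \dots, x_n = x$ whose consecutive pairs are either joined by a metric edge $e\in E$ with $l(e)>0$ or satisfy $b(x_i,x_{i+1})>0$. For a metric-edge step, Cauchy--Schwarz gives $|u(x_{i+1})-u(x_i)|^2 \leq l(e) \int_0^{l(e)} |u'_e(t)|^2\, dt$; for a $b$-edge step, $|u(x_i)-u(x_{i+1})|^2 \leq b(x_i,x_{i+1})^{-1}\cdot b(x_i,x_{i+1})(u(x_i)-u(x_{i+1}))^2$. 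Summing along the path and using $|u(p)-u(o)|^2 \leq 2|u(p)-u(x)|^2 + 2|u(x)-u(o)|^2$ for an interior point $p$ of some edge (combined with the Cauchy--Schwarz bound within that edge), one obtains $|u(p)-u(o)|^2 \leq C_{o,p}\,\oh{Q}(u)$. This simultaneously shows that $\|\cdot\|_o = 0$ implies $u\equiv 0$ (so $\|\cdot\|_o$ is a bona fide norm) and that point evaluations are continuous.

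Next, for completeness, I take a $\|\cdot\|_o$-Cauchy sequence $(u_n)$. By point-evaluation continuity, $(u_n(p))$ is Cauchy in $\RR$ for every $p\in\CX_\Gamma$, so $u_n\to u$ pointwise for some function $u$. Restricting to a single metric edge $e$, the derivatives $(u_n'|_e)$ form a Cauchy sequence in $L^2(0,l(e))$ with limit $g_e$; combined with pointwise convergence at the two endpoints, a standard argument identifies $u|_e$ as an absolutely continuous function with weak derivative $g_e \in L^2(0,l(e))$. The edge continuity together with the matching endpoint limits from the pointwise convergence at vertices yields $u\in C(\CX_\Gamma)$. A Fatou argument applied to the discrete sum $\tfrac{1}{2}\sum_{x,y} b(x,y)(u_n(x)-u_n(y))^2$ shows that this part of $\oh{Q}(u)$ is finite and that the corresponding tails go to zero. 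Putting the edge and discrete pieces together yields $u\in \widetilde{D}$ and $\|u_n - u\|_o \to 0$, establishing completeness; the inner product on $\widetilde{D}$ is obtained by polarization of $\|\cdot\|_o^2$.

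Finally I turn to the convergence characterization. The ``only if'' direction is immediate: $\|u_n-u\|_o\to 0$ gives pointwise convergence (continuity of point evaluation) and $\oh{Q}(u_n)^{1/2}\to \oh{Q}(u)^{1/2}$ by the triangle inequality for the seminorm $\oh{Q}^{1/2}$. For the ``if'' direction, assume $u_n\to u$ pointwise and $\limsup_n \oh{Q}(u_n) \leq \oh{Q}(u)$. Then $(u_n)$ is bounded in $(\widetilde{D},\|\cdot\|_o)$, so every subsequence has a further weakly convergent subsequence with some weak limit $v\in \widetilde{D}$; continuity of point evaluations forces $v=u$, so in fact $u_n \rightharpoonup u$. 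Lower semicontinuity of the norm under weak convergence gives $\oh{Q}(u) \leq \liminf_n \oh{Q}(u_n)$, which together with the hypothesis yields $\oh{Q}(u_n)\to \oh{Q}(u)$ and hence $\|u_n\|_o \to \|u\|_o$. Weak convergence plus norm convergence in a Hilbert space implies strong convergence, so $\|u_n-u\|_o\to 0$. The main obstacle is the completeness step, where I must reconcile the edgewise $L^2$-convergence of derivatives with pointwise vertex convergence in order to produce an element of $\widetilde{D}$ and not merely of some larger piecewise Sobolev space.
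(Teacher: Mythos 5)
Your proposal is correct and takes essentially the same route as the paper: continuity of point evaluations via a path/Cauchy--Schwarz estimate using connectedness of $l+b$, completeness via pointwise limits combined with Fatou-type arguments on the edge and jump parts, and the convergence characterization via boundedness, weak compactness, identification of the weak limit with the pointwise limit, and weak lower semicontinuity of the norm. Your final ``weak plus norm convergence implies strong convergence'' step is exactly the expansion of $\|u-u_n\|_o^2$ that the paper writes out explicitly, and your completeness argument is just a more detailed version of the paper's sketch.
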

\begin{proof}
Note that convergence with respect to the $\|\cdot\|_o$ implies pointwise convergence. This is due to a one-dimensional Sobolev embedding which can be deduced as follows. For an arbitrary $x \in \CX_\Gamma,$ let $\gamma$ be a path connecting $x$ and $o$ (which consists of a mix of continuous paths along edges with respect to $l$ and combinatorial paths along edges of $b$). Then, a combination of the fundamental theorem of calculus along the continuous parts of $\gamma$ and a summation along the combinatorial parts of $\gamma$ and Cauchy-Schwarz inequality lead to 
$$|u(x)| \leq C(\gamma) \oh{Q}(u)^{1/2},$$
for arbirtray $u \in \ow{D}$, with a constant $C(\gamma)$ independent of $u$. For more details we refer the reader to \cite[Chapter~1, Section~2]{Hae}.

Now, the pointwise limit has finite energy by  standard Fatou type arguments.
Hence, $\ow D$ is a Hilbert space with norm $\|\cdot\|_{o}$.\\
Next, consider a sequence  $(u_n)$ converging to  $ u$ with respect to
$\|\cdot\|_o$. As mentioned already, this implies   pointwise
convergence of $u_n$ to $u$  and, clearly, $\oh{Q}(u_n) \to
\oh{Q}(u)$ holds as well. For the other direction let $u_n
\in \widetilde{D}$, $n\ge0$, be a sequence as stated. In Hilbert
spaces bounded sets are weakly compact. Since $(u_n)$ is a bounded
sequence, there is a weakly convergent subsequence. This weak limit
$u$ has to agree with the pointwise limit and we have $u_n \to u$
weakly. Finally, we arrive at
\begin{eqnarray*}
0 &\leq & \oh{Q} (u-u_n) + (u(o) - u_n(o))^2 \leq  \oh{Q}(u) + u(o)^2 + \oh{Q}(u_n) + u_n(o)^2 - 2\langle
u,u_n\rangle_o
\end{eqnarray*}
which yields $u_n \to u$ in $\widetilde{D}$.
\end{proof}

\begin{proof}[Proof of Theorem~\ref{thm;MG;FinEnergy}]
We will only prove the statement in the case when the graph $l+b$ is connected. The general case follows by considering connected components. 

Recall that $\{ u\in D(Q)_{\rm loc} \cap L^\infty(X,m) \mid Q(u) < \infty\} = \ow{D}\cap L^\infty(X,m)$ and that the extension of $Q$ to $D(Q)_{\rm loc}$ and $\oh{Q}$ agree on this set. Let $u \in \ow{D}$ be given. As discussed, we obtain $u^{(n)}= (u \wedge n)\vee (-n) \in D(Q)_{\rm loc}$ and by the Markov property of $\oh Q$ we have $Q(u^{(n)}) = \oh Q(u^{(n)}) \leq \oh Q(u)$ for each $n$. Therefore, $u\in \ow D(Q)$ holds.

On the other hand, for $u\in \ow D(Q)$, we conclude $u^{(n)} \in \ow{D}$ and $\oh{Q}(u^{(n)}) = \ow{Q}(u^{(n)}) \leq \ow{Q}(u)$. Therefore, $(u^{(n)})$ is a bounded sequence in the Hilbert space $(\oh{Q},\ow{D})$. Thus, it has a weakly convergent subsequence. By the pointwise convergence of $u^{(n)}$ towards $u$ this limit must coincide with $u,$ showing $u \in \ow{D}$. Since $\oh{Q}$ has the Markov property, we obtain $\oh{Q}(u^{(n)}) \leq \oh{Q}(u^{(n)})$. Now, the previous lemma implies
$$\oh{Q}(u) = \lim_{n\to \infty} \oh{Q}((u \wedge n)\vee (-n)) =  \lim_{n\to \infty} Q((u \wedge n)\vee (-n)) = \widetilde{Q}(u),$$
where the last equality follows from the definition of $\ow{Q}.$ This finishes the proof.
\end{proof}

We now turn to the associated operators. We denote by $\ow F$ the
space from Section~\ref{section;Graph} for the graph $b$ over $V$.
Similarly, we let  $\ow L$ be the generalized Laplacian from
Section~\ref{section;Graph} for the graph $b$ and the counting
measure $m\equiv 1$.

For $u$ such that $u_{e}''\in L^{2}(0,l(e))$, $e\in E$, the derivatives $u_{e}'(\partial^{+}(e))$ and $u_{e}'(\partial^{-}(e))$ exist  for all $e\in E$ and we define normal derivative in a vertex
$x\in V$ by
\[\partial_n u (x)= \sum_{e\sim x} \sum_{\partial^+(e)=x} u_e'(0) -
\sum_{\partial^-(e)=x} u_e'(l(e)).\]
We say $u\in W_{\mathrm{loc}}^{1,2}(\CX_{\Gamma})$ satisfies the Kirchoff conditions
if $u\in C(\CX_{\Gamma})$, $u\vert_{V}\in{\ow F}$ and
\begin{align}\label{eq;KC}\tag{KC}
    \partial_{n}u(x)= \ow Lu(x),\qquad x\in V.
\end{align}
%With this notation we introduce an analogue of the operator $\ow L$ from Section~\ref{section;Graph} which we denote by $\ow \Delta$.
%Let $$ D(\ow \Delta)=\{u\in W^{1,2}(\CX_{\Gamma})\mid u\mbox { satisfies (KC)}\}$$ and
%\begin{align*} (\ow   \Delta u)_{e}=-u_{e}'',\qquad e\in E. \end{align*}

We will need the  operator $\mathcal{L}$ defined at the end of
Section~\ref{subsection;FFE}.

\begin{theorem}
The operator $\mathcal{L}$ acts as
\begin{align*} (\mathcal{L}u)_{e}=-u_{e}'',\qquad e\in E, \end{align*}
on the domain
\begin{align*}
D(\mathcal{L})=\{u\in C( \CX_\Gamma) \mid u'\in L^{2}(\CX_\Gamma), u''\in L^{1}_{\mathrm{loc}}(\CX_\Gamma), u\mbox{ satisfies (KC)}\}.
\end{align*}
\end{theorem}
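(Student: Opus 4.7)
The plan is to derive a single Green-type identity on $\CX_\Gamma$ that splits $\ow{Q}(u,v)$ into a distributional-$L^{2}$ pairing against $-u''$ plus a vertex sum governed by the quantity $\ow Lu(x)-\partial_n u(x)$, and then read off both inclusions of $D(\mathcal{L})$ by choosing suitable test functions $v$.

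The key step is to establish: for $u\in C(\CX_\Gamma)$ with $u'\in L^{2}(\CX_\Gamma)$, $u''\in L^{1}_{\mathrm{loc}}(\CX_\Gamma)$, and $u$ of finite energy (so $u|_V\in \ow F$), and for every $v\in D(Q)\cap C_c(\CX_\Gamma)$,
\begin{align*}
\ow Q(u,v)\;=\;-\int_{\CX_\Gamma} u''v\,dm\;+\;\sum_{x\in V}\bigl(\ow Lu(x)-\partial_n u(x)\bigr)v(x).
\end{align*}
On each edge $e$ I apply one-dimensional integration by parts to $\int_{0}^{l(e)}u_e'v_e'\,dt$; because $u_e''\in L^{1}$ on a full neighborhood of each endpoint (the topological meaning of $L^{1}_{\mathrm{loc}}(\CX_\Gamma)$), the function $u_e'$ lies in $W^{1,1}$ up to the boundary, so the one-sided values $u_e'(0),u_e'(l(e))$ exist and $\partial_n u(x)$ is well-defined. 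Summing the edge contributions and regrouping the boundary terms by vertex produces $-\sum_x \partial_n u(x)v(x)$. For the combinatorial part of $\oh Q(u,v)$, local finiteness of $b$ and compact support of $v$ make the sum finite, and Lemma~\ref{Green} applied to the graph $b$ with counting measure yields $\sum_x \ow Lu(x)v(x)$. Adding the two pieces gives the identity.

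For the inclusion $\supseteq$: if $u$ belongs to the right-hand side, condition (KC) kills the vertex sum, leaving $\ow Q(u,v)=\int(-u'')v\,dm$ for every $v\in D(Q)\cap C_c$. Since $-u''\in L^{1}_{\mathrm{loc}}(\CX_\Gamma)$, this is exactly the defining relation for $D(\mathcal L)$, so $u\in D(\mathcal L)$ with $(\mathcal L u)_e=-u_e''$. For the reverse inclusion $\subseteq$: given $u\in D(\mathcal L)$, write $f:=\mathcal L u\in L^{1}_{\mathrm{loc}}$ and note $u\in \ow D(Q)=\widetilde D$ by Theorem~\ref{thm;MG;FinEnergy}, so automatically $u\in C(\CX_\Gamma)$ and $u'\in L^{2}(\CX_\Gamma)$. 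Testing $\ow Q(u,v)=\int fv\,dm$ against smooth bump functions $v$ compactly supported inside a single open edge $\CX_e$ makes the combinatorial and vertex contributions vanish, producing the distributional identity $-u_e''=f_e$; hence $u''\in L^{1}_{\mathrm{loc}}(\CX_\Gamma)$. Then the Green identity of the first step applies, and testing against a continuous ``hat'' function $v$ with $v(x)=1$, $v(y)=0$ for $y\in V\setminus\{x\}$, and $v_e$ linear of compact support on each edge adjacent to $x$ (such $v$ lies in $\widetilde D\cap C_c\subseteq D(Q)\cap C_c$) reduces the identity $\ow Q(u,v)=\int(-u'')v\,dm$ to $\ow Lu(x)-\partial_n u(x)=0$. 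Since $x$ was arbitrary, (KC) holds.

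The main obstacle is the interplay of regularities at the vertices: to have a well-defined $\partial_n u(x)$ and to perform integration by parts up to the boundary, one needs $u_e'$ to have one-sided limits at each endpoint, which is the reason the hypothesis $u''\in L^{1}_{\mathrm{loc}}(\CX_\Gamma)$ must be interpreted with respect to the topology of $\CX_\Gamma$ (so that $u_e''\in L^{1}[0,\varepsilon]$ near each endpoint). A secondary technical point is the verification that the hat functions centered at a single vertex genuinely belong to $D(Q)$, which follows from their membership in $\widetilde D\cap C_c(\CX_\Gamma)$ together with the regularity of $Q$.
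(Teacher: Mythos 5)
Your proposal is correct and follows essentially the same route as the paper's own proof: Theorem~\ref{thm;MG;FinEnergy} gives $u\in\ow{D}$ (hence $u\in C(\CX_\Gamma)$ and $u'\in L^2(\CX_\Gamma)$), testing with $v$ supported inside single edges identifies $\cL u=-u''\in L^1_{\rm loc}(\CX_\Gamma)$, and edgewise integration by parts combined with the discrete Green formula of Lemma~\ref{Green} yields the vertex identity encoding (KC); your hat-function argument and the discussion of the endpoint regularity of $u_e'$ merely spell out what the paper leaves implicit, including the reverse inclusion which the paper dispatches as ``similar considerations''. The only caveat, inherited from the theorem's formulation itself rather than introduced by you, is that in the reverse inclusion both your Green identity and the required membership $u\in\ow{D}(Q)$ presuppose $\oh{Q}(u)<\infty$, and the stated conditions $u'\in L^2(\CX_\Gamma)$ and (KC) do not by themselves control the discrete part of the energy.
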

\begin{proof} The domain $D(\mathcal{L})$ is given as
$$\{u \in \widetilde{D}(Q)\, | \, \text{ there exists } f \in
L^1_{\rm loc}(\CX_{\Gamma}) \text{ s.t. } \ow{Q}(u,v) = \as{f,v}
\text{for all } v \in D(Q) \cap C_c(\CX_{\Gamma})\}.$$ For $u$ in
this domain, we get  from Theorem~\ref{thm;MG;FinEnergy}  $u\in \ow
D(Q)=\ow D$ which implies $u'\in L^{2}( \CX_\Gamma)$. Furthermore,
by testing with functions $v$ supported within the edges, we get
$f=u''\in L^{1}_{\mathrm{loc}}(\CX_{\Gamma})$. Finally, we see using
partial integration and Lemma~\ref{Green},
\begin{align*}
\langle - u'', v\rangle=\widetilde{Q}(u,v) =&\langle
-\widetilde\partial_{n} u, v\rangle - \langle u'', v\rangle+ \langle  \widetilde{L} u, v \rangle.
\end{align*}
Hence, $u$ satisfies (KC). The other inclusion follows by similar considerations.
\end{proof}

\begin{remark} Let us note that the values of $b$ do not play a role
in the action of $\mathcal{L}$. They only enter when it comes to
domains.
\end{remark}

\subsection{Extensions of Dirichlet forms}

By Theorem~\ref{thm;MG;FinEnergy} the form $Q^{\max}$ introduced in Section~\ref{subsection;Silverstein} is a restriction of $\ow Q$ to
\begin{align*}
    D(Q^{\max})=\ow D\cap L^{2}(\CX_{\Gamma}).
\end{align*}

The following is an application of Theorem~\ref{H neq H0}.

\begin{theorem}\label{thm;MG;H neq H0}
The following assertions are
equivalent.\begin{itemize}
\item[(i)] $Q \neq Q^{\max}$.
\item[(ii)] There exists a nontrivial $u \in D(\mathcal{L})\cap L^1(\CX_{\Gamma})\cap L^2(\CX_{\Gamma})$
such that $ u'' \in L^2(\CX_{\Gamma})$ with $u\geq 0$, $ u ''\leq 0$ and $ u'' \neq
0$.
\end{itemize}

\end{theorem}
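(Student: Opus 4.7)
The approach is to derive this theorem as a direct application of the general Theorem~\ref{H neq H0} for regular Dirichlet forms, combined with the explicit description of $\mathcal{L}$ on metric graphs given in the preceding theorem of this section.

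The plan is as follows. First, I apply Theorem~\ref{H neq H0} to the regular Dirichlet form $Q$ on $L^2(\CX_\Gamma)$. This yields that $Q\neq Q^{\max}$ is equivalent to the existence of a nontrivial $u\in D(\mathcal{L})\cap L^1(\CX_\Gamma)\cap L^2(\CX_\Gamma)$ with $u\geq 0$, $\mathcal{L}u\in L^2(\CX_\Gamma)$, $\mathcal{L}u\leq 0$, and $\mathcal{L}u\neq 0$. Second, I invoke the explicit identification from the preceding theorem: any $u\in D(\mathcal{L})$ is a continuous function on $\CX_\Gamma$ with $u'\in L^2(\CX_\Gamma)$ and $u''\in L^1_{\mathrm{loc}}(\CX_\Gamma)$ satisfying the Kirchhoff condition (KC), and $\mathcal{L}$ acts edgewise by $(\mathcal{L}u)_e=-u_e''$. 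Substituting these facts, the condition $\mathcal{L}u\in L^2(\CX_\Gamma)$ becomes $u''\in L^2(\CX_\Gamma)$, and the sign conditions on $\mathcal{L}u$ translate into the corresponding edgewise sign conditions on $u''$ (with the sign flip coming from $\mathcal{L}=-d^2/dx^2$). Together this yields the equivalence (i)$\Leftrightarrow$(ii).

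I anticipate no substantive obstacle here: the entire argument reduces to mechanical substitution once the identification of $\mathcal{L}$ has been made, and that identification is precisely the content of the theorem immediately preceding. The only point requiring care is book-keeping of signs and of the membership conditions (membership in $D(\mathcal{L})$ automatically encodes both continuity across vertices and the Kirchhoff matching, so no additional verification is needed beyond quoting the previous theorem).
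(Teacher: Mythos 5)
Your proposal is exactly the paper's argument: the paper proves this theorem simply by citing Theorem~\ref{H neq H0} together with the identification of $\mathcal{L}$ as $(\mathcal{L}u)_e=-u_e''$ on the stated domain, which is precisely your substitution. One remark: carrying out the sign flip you mention, the condition $\mathcal{L}u\leq 0$ translates to $u''\geq 0$ (subharmonicity, matching the manifold version where $\Delta_\Phi u\geq 0$), so your careful bookkeeping in fact exposes that the inequality $u''\leq 0$ in the printed statement is a sign slip in the paper rather than a defect of your argument.
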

The following is an application of Theorem~\ref{thm;US1}.
\begin{theorem} \label{thm;MG,US1}
The following assertions are equivalent.
\begin{itemize}
\item[(i)] $Q = Q^{\max}$.
\item[(ii)] $D(L) = \{u \in D(\cL )\cap L^2(\CX_{\Gamma})\, | \,  u'' \in L^2(\CX_{\Gamma})\}$.
\item[(iii)] For all $u \in D(\cL)\cap L^2(\CX_{\Gamma})$ such that $ u'' \in L^2(\CX_{\Gamma})$
and all $v \in \ow{D}(Q)\cap L^2(X,m)$
$$\ow{Q}(u,v) = \sum_{e\in E}\int_0^{l(e)} u''_{e}(t)v(t) \, dt.$$
\end{itemize}
\end{theorem}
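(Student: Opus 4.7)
The plan is to obtain Theorem~\ref{thm;MG,US1} as a direct specialization of the abstract Theorem~\ref{thm;US1} to the metric graph setting. The hypotheses of Theorem~\ref{thm;US1} are already in place: $\CX_\Gamma$ is a locally compact separable metric space, and $Q$ was established earlier to be a regular Dirichlet form on $L^2(\CX_\Gamma)$. The two substantive identifications we will draw upon are (a) Theorem~\ref{thm;MG;FinEnergy}, which tells us $\ow{D}(Q) = \ow{D}$ with $\ow{Q}$ acting as $\oh{Q}$, and (b) the preceding theorem describing $\cL$ as $(\cL u)_e = -u_e''$ on the Kirchhoff domain.

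With these identifications at hand, the three clauses of Theorem~\ref{thm;MG,US1} arise by transcribing the corresponding clauses of Theorem~\ref{thm;US1}. First, the equality $\{u \in D(\cL)\cap L^2(\CX_\Gamma) : \cL u \in L^2(\CX_\Gamma)\} = \{u \in D(\cL)\cap L^2(\CX_\Gamma) : u'' \in L^2(\CX_\Gamma)\}$ is immediate from $\cL u = -u''$, so clause (ii) of Theorem~\ref{thm;US1} translates verbatim into clause (ii) here. Second, the abstract Green's identity $\ow{Q}(u,v) = \int_{\CX_\Gamma} \cL u\, v\, dm$ decomposes along edges into $\sum_{e \in E} \int_0^{l(e)} (-u_e'')(t) v(t)\, dt$, which matches the expression in (iii) (modulo the sign convention of the statement). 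Finally, Theorem~\ref{thm;MG;FinEnergy} guarantees that the test space $\ow{D}(Q)\cap L^2(\CX_\Gamma)$ in Theorem~\ref{thm;US1}(iii) coincides with $\ow{D}\cap L^2(\CX_\Gamma) = D(Q^{\max})$.

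No genuinely new analytic ingredient appears, and the proof amounts to citing Theorem~\ref{thm;US1} and substituting the explicit descriptions of $\ow{D}(Q)$ and $\cL$. The only small point of care is to double check that the class of test functions in clause (iii) matches on both sides; this is guaranteed by Theorem~\ref{thm;MG;FinEnergy}. Since all the work has already been done in establishing that theorem and in computing $D(\cL)$, there is no real obstacle: the implications (i) $\Rightarrow$ (ii) $\Rightarrow$ (iii) $\Rightarrow$ (i) follow along the same lines as in the abstract setting.
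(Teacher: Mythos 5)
Your proposal is correct and matches the paper's treatment: the paper states Theorem~\ref{thm;MG,US1} as a direct application of Theorem~\ref{thm;US1}, using exactly the identifications you invoke (Theorem~\ref{thm;MG;FinEnergy} for $\ow{D}(Q)=\ow{D}$ and $\ow{Q}=\oh{Q}$, and the description of $\cL$ as $(\cL u)_e=-u_e''$ on the Kirchhoff domain), with no further argument needed. Your remark about the sign is also apt, since $\cL u=-u''$ makes the abstract identity read $\ow{Q}(u,v)=-\sum_{e\in E}\int_0^{l(e)}u_e''(t)v_e(t)\,dt$, consistent with the convention used in Theorem~\ref{thm;recurrence,MG}.
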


\subsection{Stochastic completeness}
Next, we come to the application of Theorem~\ref{SC-Reg}.
\begin{theorem} \label{thm;SC-MG}
The following assertions are
equivalent.
\begin{itemize}
\item[(i)] $Q$ is stochastically complete.
\item[(ii)] For all $u \in D(\mathcal{L})\cap L^1(\CX_{\Gamma}) \cap L^2(\CX_{\Gamma})$  and $ u''
\in L^1(\CX_{\Gamma}) \cap L^2(\CX_{\Gamma})$ we have
$$\sum_{e\in E}\int_0^{l(e)} u''_{e}(t) \, dt = 0.$$
\end{itemize}
\end{theorem}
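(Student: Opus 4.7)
The plan is to apply Theorem~\ref{SC-Reg}, the characterization of stochastic completeness for general regular Dirichlet forms, to the form $Q$ on the metric graph and then use the explicit description of the operator $\mathcal{L}$ given by the preceding theorem to rewrite the abstract criterion in the concrete terms of (ii).

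More precisely, Theorem~\ref{SC-Reg} asserts that $Q$ is stochastically complete if and only if
\[
\int_{\CX_\Gamma} \mathcal{L}u\, dm = 0
\]
holds for every $u\in D(\mathcal{L})\cap L^1(\CX_\Gamma)\cap L^2(\CX_\Gamma)$ with $\mathcal{L}u\in L^1(\CX_\Gamma)\cap L^2(\CX_\Gamma)$. First, I would invoke the description of $\mathcal{L}$ established above, which yields $(\mathcal{L}u)_e = -u''_e$ on each continuum edge $\CX_e$ for $u\in D(\mathcal{L})$. Second, I would note that the reference measure $m$ on $\CX_\Gamma$ is, by construction, the direct sum of the Lebesgue measures on the intervals $(0,l(e))$ and therefore assigns zero mass to the vertex set $V$. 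Combining these two observations, the integrability assumption $\mathcal{L}u\in L^p(\CX_\Gamma)$ is equivalent to $u''\in L^p(\CX_\Gamma)=\bigoplus_{e\in E} L^p(0,l(e))$ for $p\in\{1,2\}$, and the integral appearing in Theorem~\ref{SC-Reg} takes the form
\[
\int_{\CX_\Gamma} \mathcal{L}u\, dm = -\sum_{e\in E}\int_0^{l(e)} u''_e(t)\, dt.
\]

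Since the overall minus sign is irrelevant to whether the integral vanishes, this translation yields the equivalence of (i) and (ii) at once. There is no real obstacle: the substantive analytic work has been carried out already, both in Theorem~\ref{SC-Reg} and in the identification of the domain and action of $\mathcal{L}$. The only mild point to keep in mind is that the Kirchhoff conditions \eqref{eq;KC} encoded in $D(\mathcal{L})$ do not appear in the final identity because $V$ is $m$-null; they enter only through the assumption $u\in D(\mathcal{L})$ which guarantees that the edgewise second derivatives $u''_e$ glue together into a distributional Laplacian on all of $\CX_\Gamma$.
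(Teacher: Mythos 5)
Your proposal is correct and is exactly the argument the paper intends: Theorem~\ref{thm;SC-MG} is stated there as a direct application of Theorem~\ref{SC-Reg} combined with the identification of $\mathcal{L}$ (acting edgewise as $(\mathcal{L}u)_e = -u''_e$ on the domain with Kirchhoff conditions) established just before, and your translation of the abstract criterion, including the observation that the vertex set is $m$-null, matches that route.
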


\subsection{Recurrence}

In Section~\ref{section;Graph} we discussed what it means that a graph is connected. The form $Q$ has two underlying graphs. The graph $l$ giving rise to $\CX_{\Gamma}$ and the graph $b$ giving rise to the jumping part of $Q$.

The Dirichlet form $Q$ is irreducible if the graph $l+b$ is connected. This follows by the same argument as  in Lemma~\ref{lemma;irreducible}. With this at hand we can apply Theorem~\ref{recurrence,regular}.

\begin{theorem} \label{thm;recurrence,MG}
Assume the graph $l+b$ is connected. Then the following
assertions are equivalent.
\begin{itemize}
\item[(i)] $Q$ is recurrent.
\item[(ii)]  For all $u \in  D(\mathcal{L})$  and $u''\in L^1(\CX_{\Gamma}) $ we have
$$\sum_{e\in E}\int_0^{l(e)} u''_{e}(t) \, dt = 0.$$

\item[(iii)]  For all $u \in  D(\mathcal{L})$  and $u''\in L^1(\CX_{\Gamma})$ and for all $v \in \widetilde{D}\cap
L^\infty(\CX_{\Gamma})$ we have
$$\ow{Q}(u,v) = -\sum_{e\in E}\int_0^{l(e)} u''_{e}(t)  v_e(t) \,dt.$$
\end{itemize}
\end{theorem}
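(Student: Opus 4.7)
The plan is to obtain Theorem \ref{thm;recurrence,MG} as a direct specialization of the general Theorem \ref{recurrence,regular} to the regular Dirichlet form $Q$ on a metric graph, using the explicit identification of $\mathcal{L}$ established earlier in this section.

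First I would verify the two hypotheses of Theorem \ref{recurrence,regular}. Irreducibility of $Q$ follows from connectedness of $l+b$ by exactly the argument of Lemma~\ref{lemma;irreducible}: if $W\subseteq\CX_{\Gamma}$ is $Q$-invariant, then multiplication by $1_W$ preserves $D(Q)$ and splits $Q$ additively; one checks on $C_c(\CX_{\Gamma})\cap D(Q)$ that this forces $W$ to be a union of connected components of $l+b$ (interpreting a continuum edge of $l$ as connecting its two endpoints, together with the jumps across $b$), and connectedness of $l+b$ then gives $m(W)=0$ or $m(\CX_{\Gamma}\setminus W)=0$. Next, from the explicit form of $\oh{Q}$ there is no zeroth order term, so in the Beurling--Deny decomposition of $Q$ the killing measure $k$ vanishes identically, which is exactly the hypothesis needed for Theorem \ref{recurrence,regular}(iii).

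Next I would translate the abstract conditions using the description of $\mathcal{L}$ from the previous subsection, namely that $\mathcal{L}u = (-u_e'')_{e\in E}$ on
\begin{align*}
D(\mathcal{L})=\{u\in C(\CX_\Gamma) \mid u'\in L^{2}(\CX_\Gamma),\ u''\in L^{1}_{\mathrm{loc}}(\CX_\Gamma),\ u\text{ satisfies (KC)}\}.
\end{align*}
Since vertices have Lebesgue measure zero in the reference measure on $\CX_{\Gamma}$, the condition $\mathcal{L}u\in L^1(\CX_{\Gamma})$ is exactly $u''\in L^{1}(\CX_{\Gamma})$, and in this case
\[
\int_{\CX_{\Gamma}} \mathcal{L}u\,dm = -\sum_{e\in E}\int_0^{l(e)} u_e''(t)\,dt.
\]
Thus Theorem \ref{recurrence,regular}(ii) becomes precisely condition (ii) of the present theorem, and Theorem \ref{recurrence,regular}(iii) becomes precisely condition (iii), using also Theorem~\ref{thm;MG;FinEnergy} to rewrite $\ow{D}(Q)=\ow{D}$ and $\ow{Q}=\oh{Q}$.

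I do not anticipate any serious obstacle: the three verifications above (irreducibility, vanishing killing measure, and the computation of $\int \mathcal{L}u\,dm$) are routine given the material already in place. The only step that requires a small argument is the irreducibility claim, and that argument is completely parallel to the discrete graph case of Lemma~\ref{lemma;irreducible}, with the minor bookkeeping that ``connected'' here refers to the combinatorial graph $l+b$, so that a $Q$-invariant subset of $\CX_{\Gamma}$ cannot cut either a continuum edge of $l$ or a jump edge of $b$.
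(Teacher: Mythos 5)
Your proposal is correct and follows essentially the same route as the paper: the paper also obtains Theorem \ref{thm;recurrence,MG} by noting that connectedness of $l+b$ yields irreducibility via the argument of Lemma~\ref{lemma;irreducible}, and then specializing Theorem~\ref{recurrence,regular} using the identification of $\ow{D}(Q)$, $\ow{Q}$ and $\mathcal{L}$ from Theorem~\ref{thm;MG;FinEnergy} and the subsequent theorem on $\mathcal{L}$. Your explicit remarks on the vanishing killing measure and the sign convention $\mathcal{L}u=-u''$ are exactly the routine translation the paper leaves implicit.
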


%%%%%%%%%%%%%%%%%%%%%%%%%%%%%%%%%%%%
   \appendix

\section{Construction  of the sequence $(e_n)$ and a lemma on sequences}
\label{section;A} The subsequent two results are certainly known in
one way or other. We include a proof in this appendix in order to
keep the paper self-contained and as they may be useful for further
references as well.

%In this section we provide a proof of the following proposition
%which was used in the proof of Lemma~\ref{SC-two} on the way to
%prove Theorem~\ref{thm;SC}.

   \begin{prop} \label{choice-e-n}  Let $Q$ be a Dirichlet form on a
$\sigma$-finite space $(X,m)$. Then, the following holds:
\begin{itemize}
  \item [(a)] There exists a  sequence $(e_n)$ in $D(Q)$ with $0\leq e_n \leq 1$ and
$e_n \to 1$ $m$-a.e.
  \item [(b)] If $Q$ is  regular, the sequence  $(e_n)$ from (a)  can be
chosen in $D(Q)\cap C_c (X)$.
  \item [(c)] If $Q$ is regular and recurrent, then  $(e_n)$ from (b) can be chosen
to satisfy  $e_n \to 1$ in the sense of $Q_e$, i.e., $Q(e_n)\to
0$.
\end{itemize}
\end{prop}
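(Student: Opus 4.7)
My plan is to build the sequence for (a) first via a resolvent-plus-truncation trick, and then upgrade to (b) and (c) using the regularity and recurrence hypotheses. For (a), I would use $\sigma$-finiteness to fix $h \in L^{1}(X,m) \cap L^{2}(X,m)$ with $h > 0$ $m$-a.e., and set $g := (L+1)^{-1} h$. Then $g \in D(L) \subseteq D(Q)$ and $g \ge 0$ by positivity of the resolvent. Define
\[
e_{n} := (n g) \wedge 1.
\]
Since $n g \in D(Q)$ and the unit contraction $x \mapsto (x \vee 0) \wedge 1$ is a normal contraction, the Dirichlet property gives $e_{n} \in D(Q)$; clearly $0 \le e_{n} \le 1$, and $e_{n} \to 1$ $m$-a.e.\ on $\{g > 0\}$. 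Hence (a) reduces to showing that $g > 0$ $m$-a.e.

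To verify this positivity, write $g = \int_{0}^{\infty} e^{-t} T_{t} h\, dt$ and set $A := \{g = 0\}$. Fubini together with pointwise positivity of $T_{t} h$ gives $\int_{A} T_{t} h\, dm = 0$ for Lebesgue-a.e.\ $t > 0$; picking $t_{k} \downarrow 0$ from this full-measure set and using strong continuity of $(T_{t})$ on $L^{1}(X,m)$ forces $\int_{A} h\, dm = 0$, so $m(A) = 0$ by strict positivity of $h$. This step is the main obstacle, as it is the only place where the construction genuinely uses semigroup properties beyond the Markov inequality; everything else is bookkeeping based on the Dirichlet property.

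For (b), regularity provides $\|\cdot\|_{Q}$-density of $D(Q) \cap C_{c}(X)$ in $D(Q)$: for each $e_{n}$ from (a) I would approximate it in form norm by some $f_{n} \in D(Q) \cap C_{c}(X)$, replace $f_{n}$ by its unit contraction to enforce $0 \le f_{n} \le 1$ without leaving $D(Q) \cap C_{c}(X)$, and extract a subsequence converging $m$-a.e.\ to $1$. For (c), I would invoke the standard characterization of recurrence, e.g.\ \cite[Theorem~2.1.8]{CF}: $Q$ is recurrent iff $1 \in D(Q)_{e}$ with zero energy, so there is an approximating sequence $(f_{n}) \subseteq D(Q)$ with $f_{n} \to 1$ $m$-a.e.\ and $Q(f_{n}) \to 0$. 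Combining this with the approximation argument of (b), and again unit-contracting to ensure $0 \le e_{n} \le 1$ without increasing $Q$, yields $e_{n} \in D(Q) \cap C_{c}(X)$ with $e_{n} \to 1$ $m$-a.e.\ and $Q(e_{n}) \to 0$.
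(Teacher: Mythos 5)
Your proof is correct, but part (a) takes a genuinely different route from the paper's. The paper exhausts $X$ by sets $B_k$ of finite measure, chooses $f_n \in D(Q)$ (for (b): $f_n \in C_c(X)\cap D(Q)$, by regularity) with $\|f_n - \chi_{B_n}\|_2 \to 0$, applies the unit contraction, and extracts an a.e.-convergent subsequence by a Chebyshev/Borel--Cantelli argument; this uses nothing beyond $L^2$-density and the Markov property. You instead set $e_n = (n(L+1)^{-1}h)\wedge 1$ for a strictly positive $h \in L^1(X,m)\cap L^2(X,m)$, reducing everything to the a.e.\ strict positivity of $g=(L+1)^{-1}h$; your Fubini argument for that positivity is sound, and this construction buys you a sequence converging a.e.\ with no subsequence extraction. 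The price is some extra (true, but not established in the paper) machinery: the Fubini step needs a jointly measurable version of $(t,x)\mapsto T_t h(x)$, and you invoke strong continuity of the extended semigroup on $L^1$ --- the latter can be avoided, e.g.\ by noting that $T_{t_k}h = 0$ a.e.\ on your set $A=\{g=0\}$ and that $T_{t_k}h \to h$ in $L^2$ admits an a.e.-convergent subsequence, or by replacing the semigroup representation with the resolvent monotonicity $(L+1)^{-1}h \ge (L+n)^{-1}h$ together with $n(L+n)^{-1}h \to h$ in $L^2$. Your parts (b) and (c) are essentially the paper's argument: approximate in $\|\cdot\|_Q$ by elements of $C_c(X)\cap D(Q)$ (for (c), starting from the sequence furnished by the recurrence characterization), apply the unit contraction, use $Q((0\vee f)\wedge 1)\le Q(f)$ and the triangle inequality for $Q^{1/2}$, and pass to an a.e.-convergent subsequence; this matches the paper up to the cosmetic difference that you approximate the $e_n$ from (a) rather than the indicators $\chi_{B_n}$.
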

\begin{proof} (a) By our assumptions there exists an increasing sequence of
sets of finite measure $(B_k)$ such that $X = \bigcup_k B_k$.
 Because $D(Q)$ is dense in $L^2(X,m)$, we can choose $f_n \in D(Q)$ satisfying
 $$ \|f_n - \chi _{B_n}\|_{2} \to 0, \text{ as }n \to \infty.$$
 Let $e_n = (0 \vee f_n)\wedge 1$. Since $Q$ is a Dirichlet form, we infer  $e_n \in D(Q).$ Furthermore by
construction we see $0 \leq e_n \leq 1$ and $$ \|e_n - \chi _{B_n}\|_{2} \leq
\|f_n - \chi _{B_n}\|_{2}.  $$
  We want to show that $(e_n)$ possesses a subsequence converging to $1$
$m-$almost surely. For  $k,n \in \mathbb{N}$ and $\delta > 0$ let
$$ A_{k,n,\delta} = \{x \in B_k: |e_n(x)-1| \geq \delta \}.$$
By the Markov-inequality we observe for $n \geq k$
$$m(A_{k,n,\delta}) \leq \delta^{-2} \| (1-e_n)\chi_{B_k} \|_2^2  \leq
\delta^{-2}\| \chi_{B_n} - e_n \|_2^2.$$
This allows us to choose a subsequence $e_{n_l}$, such that for any $k$
$$ \sum_{l=1}^{\infty} m(A_{k,n_l,l^{-1}}) < \infty.$$
Let $ N = \bigcup_k \bigcap_{j \geq 1} \bigcup_{l\geq j}A_{k,n_l,l^{-1}}$ and
$x \in X\setminus N$. It is easily verified that $e_{n_l}(x) \to 1$ as   $l \to
\infty$. To prove (a), it remains to show $m(N) = 0$, which can be checked
directly by computing
\begin{align*}
m(N)  \leq \sum_k \lim_{j \to \infty} m \Big(\bigcup_{l \geq j}
A_{k,n_l,l^{-1}}\Big)  \leq \sum_k \limsup_{j \to \infty} \sum_{n\geq j} m(A_{k,n_l,l^{-1}}) =0
\end{align*}

(b) Because of the regularity of $Q$, we know that $C_c(X) \cap D(Q)$ is dense
in $D(Q)$ (hence, in $L^2(X,m)$) with respect to $L^2(X,m)$ convergence. Thus, in the proof of (a) we can replace $f_n \in D(Q)$ by $g_n \in  C_c(X) \cap D(Q)$ to obtain (b).

(c) By recurrence there exists a sequence $h_n \in D(Q)$ such that $0 \leq h_n
\leq 1$, $h_n \to 1$ pointwise m-almost surely and
$$ \lim_{n \to \infty} Q(h_n)= 0. $$
By  regularity of $Q$ we can choose $\tilde{e}_n \in C_c(X) \cap D(Q)$
satisfying
$$ \|\tilde{e}_n-h_n \|_Q \to 0 \text{ as } n\to \infty.$$
Let $e_n = (0 \vee h_n)\wedge 1$ such that $0\leq e_n \leq 1$. We will show,
that $e_n$ has a subsequence converging to $1$ $m$-almost everywhere and
$$\lim_{n\to \infty}Q(e_n) = 0.$$
Let $A_{k,n,\delta}$  be sets defined as in the proof of (a). The first
assertion follows as above, using
\begin{align*}
m(A_{k,n,\delta}) &\leq \delta ^{-2} \|(e_n - 1)\chi_{B_k}\|_2^2 \\
&\leq \delta
^{-2} \|(\tilde{e}_n - 1)\chi_{B_k}\|_2^2\\
&\leq \delta ^{-2} \left[ \|(\tilde{e}_n - h_n)\chi_{B_k}\|_2 + \|(1-
h_n)\chi_{B_k}\|_2 \right]^2 \\
&\leq  \delta ^{-2} \left[ \|(\tilde{e}_n - h_n)\|_2 + \|(1- h_n)\chi_{B_k}\|_2
\right]^2.
\end{align*}
The second statement can be deduced by
\begin{align*}
Q(e_n)^ {1/2} \leq Q(\tilde{e}_n)^ {1/2}  \leq  Q(\tilde{e}_n-h_n)^ {1/2} +
Q(h_n)^ {1/2}  \leq \|\tilde{e}_n-h_n\|_Q + Q(h_n)^ {1/2}.
\end{align*}
This finishes the proof.
\end{proof}

\begin{lemma} \label{sequences} Let $(a_{n,m})_{n,m \in \mathbb{N}}$ be a
sequence of real numbers satisfying $a_{n+1,m} \geq a_{n,m}$ for each $n,m \in
\mathbb{N}$. Then,
$$\liminf_{n\to \infty}\liminf_{m\to \infty} a_{n,m} \leq \liminf_{m\to
\infty}\liminf_{n\to \infty} a_{n,m}.$$
\end{lemma}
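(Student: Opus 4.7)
The plan is to exploit monotonicity of $a_{n,m}$ in the first index in order to convert the outer $\liminf_n$ on the left-hand side into a supremum, and the inner $\liminf_n$ on the right-hand side into the same supremum. Once this is done, the inequality reduces to monotonicity of $\liminf_m$.

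More concretely, I would proceed as follows. First, fix $m$ and observe that by the hypothesis $a_{n+1,m}\ge a_{n,m}$, the sequence $(a_{n,m})_{n}$ is monotone nondecreasing, so
$$\liminf_{n\to\infty} a_{n,m} = \lim_{n\to\infty} a_{n,m} = \sup_{n} a_{n,m}.$$
In particular, for every fixed $n_0$ and every $m$,
$$a_{n_0,m} \le \sup_{n} a_{n,m} = \liminf_{n\to\infty} a_{n,m}.$$
Taking $\liminf_{m\to\infty}$ on both sides and using the standard monotonicity property of $\liminf$ yields, for each fixed $n_0$,
$$\liminf_{m\to\infty} a_{n_0,m} \le \liminf_{m\to\infty}\liminf_{n\to\infty} a_{n,m}.$$

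Second, the sequence $b_{n_0} := \liminf_{m\to\infty} a_{n_0,m}$ inherits monotonicity in $n_0$ from the hypothesis, so $\liminf_{n_0\to\infty} b_{n_0} = \sup_{n_0} b_{n_0}$. The right-hand side of the displayed inequality above does not depend on $n_0$, so taking the supremum over $n_0$ gives
$$\liminf_{n\to\infty}\liminf_{m\to\infty} a_{n,m} = \sup_{n_0} b_{n_0} \le \liminf_{m\to\infty}\liminf_{n\to\infty} a_{n,m},$$
which is the claim.

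There is no real obstacle here; the only point that requires care is to verify that the inner $\liminf_n$ on the right is actually a supremum (which makes the inequality $a_{n_0,m}\le \liminf_{n\to\infty} a_{n,m}$ trivial), and that monotonicity in $n$ is preserved after applying $\liminf_{m\to\infty}$ (which follows directly from the fact that $\liminf$ is monotone in its argument). No hypothesis of monotonicity in $m$ is needed or used.
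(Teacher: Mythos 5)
Your proof is correct. The key point in both arguments is the same: monotonicity in $n$ gives $a_{n_0,m}\le \sup_n a_{n,m}=\liminf_{n\to\infty}a_{n,m}$ for every $m$. But your execution differs from the paper's. The paper first reduces to the case where the right-hand side is finite, fixes $\varepsilon>0$, extracts a subsequence $(m_l)$ along which $\liminf_n a_{n,m_l}$ is within $\varepsilon$ of the right-hand side, bounds $a_{n,m_l}$ by monotonicity, and then compares the liminf over the full sequence in $m$ with the liminf along the subsequence before letting $\varepsilon\to 0$. You instead apply the pointwise bound for \emph{all} $m$, use only the monotonicity of $\liminf_m$ in its argument to get $\liminf_m a_{n_0,m}\le \liminf_m\liminf_n a_{n,m}$ for every fixed $n_0$, and then pass to the (lim)inf in $n_0$. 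This is cleaner: it needs no $\varepsilon$, no subsequence extraction, and no case distinction on finiteness, and it works verbatim in the extended reals. One small remark: your Step 2 observation that $b_{n_0}=\liminf_m a_{n_0,m}$ is monotone in $n_0$ is true but not needed --- since $b_{n_0}$ is bounded above by the constant $\liminf_m\liminf_n a_{n,m}$ for every $n_0$, the bound $\liminf_{n_0}b_{n_0}\le \liminf_m\liminf_n a_{n,m}$ follows immediately without identifying the liminf with a supremum.
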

\begin{proof} Suppose $\liminf_{m\to \infty}\liminf_{n\to \infty} a_{n,m} <
\infty$. Let $\varepsilon > 0$ be arbitrary. Choose an increasing sequence of
indices $(m_l)$ such that
$$\liminf_{n\to \infty} a_{n,m_l} \leq \liminf_{m \to \infty} \liminf_{n\to
\infty} a_{n,m} + \varepsilon$$
for each $l \geq 1$. This and the monotonicity in $n$ imply
$$a_{n,m_l} \leq \liminf_{m \to \infty} \liminf_{n\to \infty} a_{n,m} +\varepsilon.$$
As $(a_{n,m_l})_{l \geq 1}$ is a particular subsequence of $(a_{n,m})_{m \geq1}$, we infer
$$\liminf_{m\to \infty} a_{n,m} \leq \lim_{l \to \infty} \liminf_{n\to \infty}
a_{n,m_l} + \varepsilon$$
which proves the claim.
\end{proof}

\end{document}